\documentclass{article}
\usepackage{amsfonts,amsmath,amssymb,amsthm}
\usepackage{graphicx} 
\usepackage{xcolor}
\usepackage{hyperref}
\usepackage{todonotes}
\usepackage{fullpage}
\usepackage{multicol}
\usepackage{ulem} 

\newtheorem{thm}{Theorem}[section]
\newtheorem{lemma}[thm]{Lemma}
\newtheorem{proposition}[thm]{Proposition}

\newcommand\Zf[1]{Z(#1)}
\newcommand\Zl[1]{Z_{(\ell)}(#1)}
\newcommand\Zone[1]{Z_{(1)}(#1)}
\newcommand\Ztwo[1]{Z_{(2)}(#1)}
\newcommand\Znum[2]{Z_{(#2)}(#1)}

\usetikzlibrary{graphs}
\usetikzlibrary{graphs.standard}
\usetikzlibrary{positioning, decorations.pathreplacing}
\usetikzlibrary{arrows}

\newcommand\plainell{*}
\newcommand\blockb{\Xi}
\title{Leaky Forcing: Extending Zero Forcing Results to a Fault-Tolerant Setting}

\author{Beth Bjorkman\thanks{Air Force Research Laboratory Sensors Directorate, Wright-Patterson Air Force Base, OH, USA (beth.morrison@us.af.mil)} \and Lei Cao\thanks{Department of Mathematics, Halmos College, Nova Southeastern University, Fort Lauderdale, FL, USA (lcao@nova.edu)} \and Franklin Kenter\thanks{Mathematics Department, United States Naval Academy, Annapolis, MD, USA (kenter@usna.edu)} \and  Ryan Moruzzi Jr\thanks{Department of Mathematics, California State University, Northridge, Northridge, CA, USA (ryan.moruzzi@csun.edu)} \and Carolyn Reinhart\thanks{Department of Mathematics and Statistics, Swarthmore College, Swarthmore, PA, USA (creinha1@swarthmore.edu)} \and Violeta Vasilevska\thanks{Department of Mathematics, Utah Valley University, Orem, UT, USA (violeta.vasilevska@uvu.edu)}
}

\begin{document}
\maketitle

\begin{abstract}
We study a recent variation of zero forcing called leaky forcing.  Zero forcing is a propagation process on a network whereby some nodes are initially blue with all others white. Blue vertices can ``force'' a white neighbor to become blue if all other  neighbors are blue. The goal is to find the minimum number of initially blue vertices to eventually force all vertices blue after exhaustively applying the forcing rule above.
Leaky forcing is a fault-tolerant variation of zero forcing where certain vertices (not necessarily initially blue) cannot force. The goal in this context is to find the minimum number of initially blue vertices needed that can eventually force all vertices to be blue, {\it regardless} of which small number of vertices can't force. This work extends results from zero forcing in terms of leaky forcing. In particular, we provide a complete determination of leaky forcing numbers for all unicyclic graphs and upper bounds for generalized Petersen graphs. We also provide bounds for the effect of both edge removal and vertex removal on the $\ell$-leaky forcing number. Finally, we completely characterize connected graphs that have the minimum and maximum possible $1$-leaky forcing number (i.e., when $\Zone{G} = 2$ and when $\Zone{G} = |V(G)|-1$).
\end{abstract}

\footnotetext{~\\The views
expressed in this article are those of the authors and do not reflect the official policy or position of the Air Force Research Laboratory, the U.S. Naval Academy, the Department of the Air Force, the Department of the Navy, the
Department of Defense, or the U.S. Government.}

\section{Introduction}

Zero forcing is a graph coloring process during which an initial set of blue vertices force the remaining vertices to become blue via repeated applications of a color change rule. It was originally introduced in \cite{AIM} as an upper bound for the maximum nullity over all matrices whose zero/nonzero pattern is dictated by a graph. Since then, it has grown into a parameter of independent interest and many variants have been introduced. 

The standard \textit{color change rule} states that a blue vertex $b$ may \textit{force} a white vertex $w$ to become blue if $w$ is the only white neighbor of $b$. Any set of initially blue vertices of a graph which results in the entire graph being colored blue after repeated applications of the color change rule is called a \textit{zero forcing set} for that graph. The size of a smallest zero forcing set for the graph is called the \textit{zero forcing number} of that graph, denoted $\Zf{G}$. 

Applications of zero forcing include quantum controllability \cite{zfquantum}, leader-follower dynamics \cite{zfleader}, and sensor allocation on electrical grids \cite{pmu,powerdominiation}. For each of these applications, a particular set of agents or nodes must be identified before the full system is realized. This is done such that regardless of how the system is realized, the system can be observed or controlled using only information known at the chosen agents or nodes. In a linear-algebraic context, zero forcing identifies a set of columns (vertices) that covers a set of all nonpivot positions for any real symmetric matrix representing a graph \cite{kenterlin}. Hence, a zero forcing set can be used as a foundation to backsolve {\it any} linear system involving a matrix whose zero/nonzero pattern is given by that graph.

We continue to study the variation of zero forcing called \textit{$\ell$-leaky forcing} (or \textit{leaky forcing} for short), which was introduced in \cite{og}. In this variant, after an initial set of blue vertices are chosen, $\ell$ vertices of the graph are adversarially chosen to be leaks. The \textit{leaky color change rule} states that a blue vertex $b$, which is not a leak, may force a white vertex $w$ to become blue if $w$ is the only white neighbor of $b$. Note that vertices chosen as leaks can never force. A set of initially blue vertices of a graph which results in the entire graph being colored blue after repeated applications of the leaky color change rule, regardless of the location of the $\ell$ leaks, is called an $\ell$-\textit{leaky forcing set} for that graph. The size of a smallest $\ell$-leaky forcing set for the graph is called the $\ell$-\textit{leaky forcing number} of that graph, denoted $\Zl{G}$. Note that $\Znum{G}{0} = \Zf{G}$.

 The main challenge of leaky forcing is that to certify a set is an $\ell$-leaky forcing set, one must verify it overcomes {\it all} possible combination of $\ell$ leaks in the graph. Alameda, Kritschgau, and Young \cite{resl} extend several known results for the classical zero forcing to leaky forcing including results on edge- and vertex- removal, forcing chains, and path graphs. Abbas \cite{abbas2023resilient} applies leaky forcing to the context of controllability of networks. Herrman \cite{herrman2022d} gave exact values for the leaky forcing number for select families of graphs, including the hypercube and a special case of the generalized Petersen graph.

The work presented in this paper significantly extends several concepts known for zero forcing as well as several of the results for leaky forcing mentioned above. After covering the preliminary materials (Section \ref{sec:pre}), we accomplish the following.
\begin{itemize}
    \item We give a complete classification for the $\ell$-leaky forcing numbers of all unicyclic graphs. This includes separate cases for $\ell=1$ (Theorems \ref{thm:3Cycle} and \ref{thm:4+cycle}), $\ell=2$ (Theorem \ref{thm:uni2}), and $\ell \ge 3$ (Theorem \ref{thm:uni3+}). This extends the work by Fallat and Hogben \cite{fallat2007minimum} as well as Row \cite{row2012technique} for zero forcing on unicyclic graphs.
    \item We determine the $1$-leaky forcing number for certain cases of the generalized Petersen graph (Theorems \ref{thm:gponePn2exact} and \ref{thm:gponePn3exact}). Further, we prove an upper bound for the 1-leaky and 2-leaky forcing for all generalized Petersen graphs (Theorems \ref{thm:gpone} and \ref{thm:gptwo}, respectively). These bounds extend previous work by Hermann \cite{herrman2022d} and Rashidi and Poursalavati \cite{rashidi2020computing}.
 \item We provide an extension of edge- and vertex-removal results (Theorems \ref{thm:edgeremove} and \ref{thm:vtxremove}, respectively), generalizing the results for 1-leaky forcing of Alameda, Kritschgau, Warnberg, and Young \cite{resl}. 
\item We give a complete classification for connected graphs that achieve the extremal values for $\Zone{G}$, specifically when $\Zone{G} = 2$ (Theorem \ref{thm:Zoneclass2}) and when $\Zone{G} = n-1$ (Theorem \ref{thm:Zoneclass}).
\end{itemize}

\section{Preliminaries} \label{sec:pre}

    We begin by introducing the notation, terminology, and previous results that will serve as a foundation for what follows. We will refer to graphs as simple undirected graphs without loops. A graph $G$ is formally $G = (V,E)$ where $V$ is the set of vertices of $G$ and $E \subseteq {V \choose 2}$ (i.e., unordered pairs of elements of $V$) is the set of edges of $G$. If the graph is not necessarily clear we write $V(G)$ and $E(G)$. Two vertices $u$ and $v$ contained in the same edge are said to be ${\it adjacent}$ or ${\it neighbors}$; an edge between $u$ and $v$ is written $uv$. The {\it degree} of a vertex $v$ in $G$, denoted $\deg(v)$, is the number of neighbors of $v$.
    Given a vertex $v \in G$, the graph $G - v$ has vertex set $V \setminus \{v\}$ and edge set $\{e \in E \colon v \not \in e\}$. Likewise, for a subgraph $H$, $G-H$ has vertex set $V(G)\setminus V(H)$ and edge set $\{e \in E \colon e \subseteq V(G)\setminus V(H)\}$. Given an edge $e \in G$, the graph $G-e$ has vertex set $V$ and edge set $E \setminus \{ e \}$. For a graph $G$, a subgraph $H$, and a vertex $v$, the subgraph $H + v$ is the induced subgraph of $G$ on $V(H) \cup \{v\}$.

    We will discuss several families of graphs throughout the paper. Let $K_n$ be the \textit{complete graph} on $n$ vertices, $C_n$ be the \textit{cycle graph} on $n$ vertices, let $P_n$ be the \textit{path graph} on $n$ vertices, and let $S_n$ be the \textit{star graph} on $n$ vertices. A \textit{tree} is a connected graph which contains no cycles as a subgraph and a \textit{forest} is a disjoint union of trees. A \textit{unicyclic graph} is a connected graph which contains exactly one cycle; structural aspects of unicyclic graphs will be defined more carefully in Section \ref{sec:Unicyclic}. The \textit{generalized Petersen graph} denoted $P(n,k)$ will be carefully defined in Section \ref{sec:gp}. For other graph theory terminology or notation see \cite{West}.

    The leaky forcing number is known for several basic graphs, which we recall now and refer to throughout the paper.

\begin{proposition}[\cite{og}, Propositions 3.1, 3.4, and 3.5]\label{prop:KP}\label{prop:basicgraphresults} The following $\ell$-leaky forcing numbers are known:
\begin{enumerate}
\item For the path graph $P_n$ on $n\geq 2$ vertices, $\Zf{P_n}=1$, $\Zone{P_n}=2$, and if $\ell\ge 2$, then $\Zl{P_n}=n$.
\item  For the complete graph $K_n$ on $n\geq 2$ vertices, if $0\leq \ell\leq n-1$, then $\Zl{K_n}=n-1$ and $\Znum{K_n}{n}=n$.
\item For the cycle graph $C_n$ on $n\ge 3$ vertices, if $\ell=0,1$, then $\Zl{C_n}=2$ and if $\ell\geq 2$, then $\Zl{C_n}=n$.
\end{enumerate}
\end{proposition}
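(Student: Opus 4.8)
The plan is to handle the three families separately while leaning on three elementary principles throughout. First, $\Zl{G}$ is monotone in $\ell$: an adversary with more leaks is at least as powerful, so $\Zf{G}=\Znum{G}{0}\le\Zone{G}\le\Ztwo{G}\le\cdots$. Second, $\Zl{G}\le |V(G)|$ always, since if every vertex is initially blue there is nothing left to force no matter where the leaks are placed. Third, to show a given set $S$ is \emph{not} an $\ell$-leaky forcing set it suffices to exhibit a placement of at most $\ell$ leaks after which some vertex can never become blue; the most useful instance of this \emph{blocking principle} is that leaking all $\deg(w)$ neighbors of a white vertex $w$ prevents $w$ from ever being forced. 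Each assertion then reduces to one explicit upper-bound set together with one adversarial lower-bound construction.

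For the path $P_n$ with vertices $v_1,\dots,v_n$, the equality $\Zf{P_n}=1$ is the standard fact that $v_1$ forces down the path. For $\Zone{P_n}=2$, the lower bound holds because a single blue vertex can simply be leaked, and for the upper bound I would take $S=\{v_1,v_n\}$: leaking an endpoint leaves the other endpoint to force the whole path, while leaking an interior vertex $v_i$ still lets the forcing chains from $v_1$ and from $v_n$ meet at $v_i$, which is forced last and so never needs to force. For $\ell\ge 2$ I would show every $S\ne V(P_n)$ fails: if a white vertex is an endpoint, one leak on its unique neighbor blocks it, and otherwise a white vertex $v_i$ is interior, so leaking $v_{i-1}$ and $v_{i+1}$ blocks it with two leaks. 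Together with the trivial upper bound this yields $\Zl{P_n}=n$.

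The cycle $C_n$ follows the same template, using that every vertex has exactly two neighbors. Here $\Zf{C_n}=2$ because a lone blue vertex has two white neighbors and stalls, whereas two adjacent blue vertices force in both directions around the cycle. For $\Zone{C_n}=2$, monotonicity gives the lower bound and I would again take two adjacent vertices $\{v_1,v_2\}$ for the upper bound: leaking one of them leaves the other to sweep the entire cycle, and leaking any other vertex $v_i$ lets the two chains from $v_1$ and $v_2$ close up at $v_i$. For $\ell\ge 2$, any white vertex still has both of its neighbors free to be leaked, so the blocking principle forces $S=V$ and $\Zl{C_n}=n$.

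For the complete graph $K_n$ the controlling observation is that, since every blue vertex is adjacent to all others, a blue vertex has a unique white neighbor only when exactly one vertex of the whole graph is white; hence forcing can occur only at the very last vertex. This immediately gives $\Zf{K_n}=n-1$ and shows every $\ell$-leaky forcing set has size at least $n-1$. When a single white vertex $w$ remains, it is forced precisely when at least one of its $n-1$ neighbors is a non-leak, so a set of size $n-1$ survives exactly when the leak budget cannot neutralize all $n-1$ neighbors of $w$; comparing $\ell$ with $n-1$ thus separates the regime in which $\Zl{K_n}=n-1$ from the regime in which the all-blue set of size $n$ is required, and the extreme case $\Znum{K_n}{n}=n$ falls out since every vertex is then leaked. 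I expect this family to be the main obstacle, since it is the only one where the value genuinely depends on $\ell$ and the lower bound must track not merely that a vertex is white but how many non-leak forcers its sole white target can retain.
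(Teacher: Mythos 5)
A preliminary remark: the paper itself does not prove this proposition --- it is quoted verbatim from \cite{og} --- so your attempt can only be judged on its own merits, not against an in-paper argument. Your treatments of the path and the cycle are correct and complete: the two-directional forcing-chain argument (leak on one endpoint leaves the other to sweep; leak in the interior lets the two chains meet at the leak, which never needs to force) gives the upper bounds for $\ell\le 1$, and your blocking principle gives the lower bounds for $\ell\ge 2$, since every vertex of $P_n$ or $C_n$ has degree at most $2$.

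The genuine problem is in part 2, precisely at the boundary your write-up never pins down. Your criterion is right: a set of size $n-1$ in $K_n$ leaves a single white vertex $w$, and it is an $\ell$-leaky forcing set exactly when the adversary cannot place leaks on all $n-1$ neighbors of $w$, i.e.\ exactly when $\ell\le n-2$. At $\ell=n-1$ the adversary has just enough leaks to neutralize every neighbor of $w$, so $\Znum{K_n}{n-1}=n$, not $n-1$. Your conclusion (``comparing $\ell$ with $n-1$ \dots\ the extreme case $\Znum{K_n}{n}=n$ falls out'') silently replaces the threshold $\ell\le n-2$ that your own criterion yields with the threshold $\ell\le n-1$ that the statement asserts; these differ, and the statement's version is false. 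Indeed, the proposition as printed is internally inconsistent: $K_2\cong P_2$, so part 2 with $\ell=n-1=1$ would give $\Zone{K_2}=1$, contradicting $\Zone{P_2}=2$ from part 1; likewise $K_3\cong C_3$ with $\ell=n-1=2$ would give $\Ztwo{K_3}=2$, contradicting $\Ztwo{C_3}=3$ from part 3. So no proof of part 2 as stated can exist; what your argument actually proves, once the boundary case is handled honestly, is $\Zl{K_n}=n-1$ for $0\le\ell\le n-2$ and $\Zl{K_n}=n$ for $\ell\ge n-1$. You should have flagged this discrepancy (whether it originates in \cite{og} or in the transcription here) rather than eliding it. Fortunately, nothing downstream is affected: the present paper only invokes part 2 with $\ell=1$ and $n\ge 3$, or with $\ell=n$, where the printed and corrected statements agree.
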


We also make frequent use of following known results about $\ell$-leaky forcing: the first concerning vertices of low degree, the second establishing a chain of inequalities for leaky forcing numbers as the number of leaks increases, and the third giving a criteria for a subset of vertices to be an $\ell$-leaky set of a graph $G$.

\begin{lemma}[\cite{og}, Lemma 2.4] \label{lem:lowdegree}
Let $G$ be a graph and choose $\ell \ge 0$. Then, every $\ell$-leaky forcing set contains every vertex of degree $\ell$ or less.
\end{lemma}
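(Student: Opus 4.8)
The plan is to argue by contradiction directly from the definition of an $\ell$-leaky forcing set. Suppose $B \subseteq V(G)$ is an $\ell$-leaky forcing set, and suppose toward a contradiction that there is a vertex $v$ with $\deg(v) \le \ell$ such that $v \notin B$. I will exhibit a placement of the $\ell$ leaks against which $B$ fails to color the whole graph blue, contradicting the hypothesis that $B$ succeeds regardless of where the leaks are placed.

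The key observation is that there are only two mechanisms by which $v$ can ever become blue: either $v$ is initially blue, or at some point a neighbor $u$ of $v$ applies the leaky color change rule to force $v$, which in particular requires $u$ to be blue and not a leak. Since $v \notin B$, the first mechanism is unavailable, so $v$ can only turn blue by being forced by one of its (at most $\ell$) neighbors.

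For the adversary's choice, I would designate every neighbor of $v$ as a leak. Since $\deg(v) \le \ell$, this uses at most $\ell$ leaks, and I can pad the set with arbitrary additional vertices if $\deg(v) < \ell$ so that exactly $\ell$ leaks are placed. Under this placement no neighbor of $v$ can ever force, because a leak never forces, so the second mechanism is also unavailable. Hence $v$ remains white throughout the entire process, regardless of the order in which the color change rule is applied, and so $B$ does not color all of $G$ blue against this leak set. This contradicts the assumption that $B$ is an $\ell$-leaky forcing set, and therefore every $\ell$-leaky forcing set must contain every vertex of degree at most $\ell$.

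I expect no substantial obstacle here: the statement follows almost immediately once one isolates the two ways a vertex can become blue and notes that blocking all of $v$'s neighbors costs only $\deg(v) \le \ell$ leaks. The only minor care needed is the bookkeeping when $\deg(v) < \ell$, where the leak budget must still be filled out to exactly $\ell$ leaks (equivalently, one observes that placing \emph{fewer} than $\ell$ leaks can only make forcing easier, so the same conclusion holds under any reading of the leak count).
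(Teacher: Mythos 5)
Your proof is correct, and it is essentially the standard argument: the paper states this lemma as an imported result from \cite{og} without reproducing its proof, and the argument given there is the same adversarial placement of leaks on all neighbors of the low-degree vertex $v$, noting that leaks can never force and that $v$, being initially white, then has no way to become blue. Your handling of the padding when $\deg(v) < \ell$ (or equivalently the remark that fewer leaks only helps the forcing process) is exactly the minor bookkeeping point one needs, so there is nothing to add.
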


\begin{lemma}[\cite{og}, Lemma 2.1]
\label{lem:moreleaksinq}
For any graph $G$ on $n$ vertices,
\[  \Zf{G} \le \Zone{G} \le \Ztwo{G} \le \cdots \le \Znum{G}{n}. \]
\end{lemma}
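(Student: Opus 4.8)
The plan is to reduce the whole chain to the single-step inequality $\Znum{G}{\ell} \le \Znum{G}{\ell+1}$ for each $0 \le \ell \le n-1$, and then chain these together; since $\Znum{G}{0} = \Zf{G}$ by definition, this recovers the full displayed sequence. To prove a single step it suffices to show that every $(\ell+1)$-leaky forcing set is also an $\ell$-leaky forcing set, because then the smallest $\ell$-leaky forcing set is no larger than the smallest $(\ell+1)$-leaky forcing set.

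The heart of the argument is a monotonicity principle: enlarging the leak set can only hinder forcing. Concretely, I would first record that if $L \subseteq L'$ are two candidate leak sets and $S$ is a set of initially blue vertices that colors all of $G$ blue under the leaky color change rule with leaks $L'$, then $S$ also colors all of $G$ blue with leaks $L$. The justification is immediate from the definition of the leaky color change rule: consider any valid sequence of forces carried out by $S$ under the leaks $L'$. Each individual force is performed by some blue, non-leak vertex $b \notin L'$; since $L \subseteq L'$, we also have $b \notin L$, so that same force is permitted under the leaks $L$. Hence the identical sequence of forces is legal under $L$ and colors all of $G$ blue.

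With this principle available, the single-step inequality follows cleanly. Let $S$ be a minimum $(\ell+1)$-leaky forcing set, so $|S| = \Znum{G}{\ell+1}$, and fix an arbitrary leak set $L$ with $|L| = \ell$. Because $\ell \le n-1$, there is a vertex $v \notin L$, and setting $L' = L \cup \{v\}$ gives a leak set of size $\ell+1$. Since $S$ overcomes every placement of $\ell+1$ leaks, it overcomes $L'$ in particular, and applying the monotonicity principle to $L \subseteq L'$ shows $S$ overcomes $L$. As $L$ was an arbitrary set of $\ell$ leaks, $S$ is an $\ell$-leaky forcing set, so $\Znum{G}{\ell} \le |S| = \Znum{G}{\ell+1}$. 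Chaining over $\ell = 0, 1, \dots, n-1$ yields the claim.

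I do not expect a genuine obstacle here, as this is a foundational monotonicity lemma; the only points that require care are stating the inclusion-monotonicity of the forcing process precisely and verifying that \emph{every} force legal under the larger leak set remains legal under the smaller one (which is exactly where $L \subseteq L'$ is used), together with the trivial but necessary observation that the extension $L' = L \cup \{v\}$ is always possible because we invoke it only for $\ell \le n-1$, guaranteeing a vertex outside $L$ exists.
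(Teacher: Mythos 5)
Your proof is correct: the monotonicity principle (a force legal under a larger leak set remains legal under any subset of those leaks, since the forcing condition depends only on the coloring) combined with extending any $\ell$-leak placement to an $(\ell+1)$-leak placement is exactly the standard argument for this chain of inequalities. Note that this paper states the lemma as a citation of \cite{og} (Lemma 2.1) without reproducing a proof, so there is no in-paper argument to compare against; your reasoning matches the approach of the cited source.
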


\begin{lemma}[\cite{resl}, Theorem 2.5] \label{lem:twodifferentleaky}
A set $S$ is an $\ell$-leaky forcing set of $G$ if and only if it is an $(\ell-1)$-leaky forcing set such that for all vertices $v \in V(G) \setminus S$, there are two different sequences of forces, one where $u$ forces $v$ and one where $u' \ne u$ forces $v$.
\end{lemma}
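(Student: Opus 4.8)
The plan is to work directly from the definition: $S$ is an $\ell$-leaky forcing set precisely when, for every set $L$ of $\ell$ leaks, $S$ colors all of $G$ under the leaky color change rule with leaks $L$. I will repeatedly use two elementary facts about an individual forcing process (a chronological list of forces). Each vertex performs at most one force in a process, since after forcing its unique white neighbor it has no white neighbor left; and a process legal under a leak set $A$ remains legal under any $A' \subseteq A$, because every forcer lies outside $A$ (hence outside $A'$) and the state-dependent ``unique white neighbor'' condition is unchanged when the process is replayed. The second fact instantly yields the $(\ell-1)$-leaky half of the right-hand side: given $\ell$ leaks handled, any set of $\ell-1$ leaks is a subset and is therefore also overcome, which is the set-level content behind Lemma~\ref{lem:moreleaksinq}.

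For the forward direction it remains to obtain the two-force condition. I would argue contrapositively: if some $v \in V(G)\setminus S$ were forced by one fixed vertex $u$ in every completed forcing process from $S$ (the exact negation of the existence of two completed processes whose final force into $v$ uses distinct vertices, using that $v \notin S$ is forced in every completed process), then I would choose a leak set $L$ of size $\ell$ with $u \in L$. Since $S$ is $\ell$-leaky it colors $G$ under $L$, producing a completed process $P$; but $P$ is legal even ignoring the leaks, so it is an ordinary completed process and must force $v$ by $u$ — impossible, as $u \in L$ cannot force. Hence each $v \notin S$ has two completed processes forcing it by different vertices.

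For the converse, fix an arbitrary leak set $L$ with $|L|=\ell$ and write $L = L' \cup \{x\}$ with $|L'| = \ell-1$. As $S$ is $(\ell-1)$-leaky, it colors $G$ under $L'$ by some completed process $P$. By the first elementary fact $x$ forces at most one vertex in $P$; if it forces none then $P$ is already $L$-legal and $S$ colors $G$ under $L$. Otherwise $x$ forces a single vertex $v$, and every other force of $P$ is performed by a vertex outside $L'\cup\{x\}=L$ and hence stays $L$-legal. Consequently the \emph{only} force of $P$ lost under $L$ is $x \to v$, so it suffices to recolor $v$ by some forcer that avoids all of $L$ and then replay the unaffected remainder of $P$.

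The main obstacle is this final rerouting. The two-force condition provides an alternative forcer $u' \ne x$ of $v$ in some completed process, but to deploy it under $L$ I need a route to $v$ that is legal for the leak set $L'$ and avoids $x$ (so that its forcer, being an interior vertex of an $L'$-legal process, automatically lies outside $L' \cup \{x\} = L$) and whose timing is consistent with the coloring actually reached under $L$. My approach is a forcing-chain reversal along the chain of $P$ that contains the force $x \to v$: reversing the portion of that chain from $v$ onward should let $v$ be reached ``from the other end'', i.e. from the second forcer guaranteed by the two-force condition, without ever invoking $x$. The delicate, and genuinely load-bearing, step is to show that having two distinct forcers of $v$ really does produce such a reversed, $L'$-legal route that terminates and recolors $v$; equivalently, phrased through stalled sets, one must rule out a stall of the $L$-closure by showing that any stalled white set would force its entire ``unique external neighbor'' boundary into $L$, which the two-force condition together with $(\ell-1)$-leakiness forbids. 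Turning either the reversal bookkeeping or this boundary count into a rigorous argument is the heart of the proof.
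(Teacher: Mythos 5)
Your forward direction is sound: legality of a forcing process is preserved when the leak set shrinks, which gives the $(\ell-1)$-leaky part, and placing one of the $\ell$ leaks on the putative unique forcer $u$ of $v$ gives the two-force condition. (For the record, the paper itself never proves this lemma; it imports it as Theorem 2.5 of \cite{resl}, so your attempt is being measured against that original result.) The problem is the converse, which is the substantive half of the statement, and it fails in two ways. First, you explicitly stop short: you call the rerouting/stalled-set step ``the heart of the proof'' and leave both of your proposed strategies (chain reversal, boundary counting) as unexecuted bookkeeping, so what you have is a plan, not a proof. Second, and more fundamentally, under your reading of the two-force condition --- two completed \emph{leak-free} forcing processes from $S$ whose forces into $v$ have distinct forcers --- the converse is simply false, so no amount of bookkeeping can finish it. Take $G=C_4$ with vertices $1,2,3,4$ in cyclic order, $S=\{1,2\}$, and $\ell=2$. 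Then $S$ is a $1$-leaky forcing set; vertex $3$ is forced by $2$ in one ordinary process ($2\to 3$, $1\to 4$) and by $4$ in another ($1\to 4$, $4\to 3$), and symmetrically vertex $4$ is forced by $1$ or by $3$; yet $S$ is not a $2$-leaky forcing set, since leaks on both $1$ and $2$ freeze the process entirely (consistent with $\Ztwo{C_4}=4$, Proposition \ref{prop:basicgraphresults}).

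The missing ingredient is a quantifier, one that the paper's abbreviated phrasing of the lemma admittedly hides: in Theorem 2.5 of \cite{resl}, and in the way this paper actually applies the lemma for $\ell=2$ (see the proof of Theorem \ref{thm:gptwo}: ``for any choice of a single leak, every other vertex not in $S$ can be forced in two different ways''), the two-force condition must hold \emph{for every placement of $\ell-1$ leaks}: for every $L'$ with $|L'|=\ell-1$ and every $v\notin S$, there are two chronological lists of forces, each valid with leak set $L'$, in which $v$ is forced by distinct vertices. For $\ell=1$ this coincides with your reading, since the only set of $0$ leaks is empty, which is why your picture feels right; for $\ell\ge 2$ it is strictly stronger, and it is exactly what completes your stalled-set plan. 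Indeed, suppose forcing from $S$ under $L=L'\cup\{x\}$ stalls at a blue set $F\subsetneq V(G)$. Take any completed $L'$-valid process $P$ (one exists since $S$ is $(\ell-1)$-leaky) and consider its first force $w\to z$ whose target $z$ lies outside $F$. At that moment every blue vertex of $P$ lies in $F$, so all neighbors of $w$ other than $z$ lie in $F$; hence $w$ could perform this same force in the stalled configuration unless $w\in L$, and since $w\notin L'$ this forces $w=x$, with $z$ the unique neighbor of $x$ outside $F$. Thus \emph{every} completed $L'$-valid process forces $z$ by $x$, contradicting the correctly quantified two-force condition at $z$ with leak set $L'$. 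Your weaker hypothesis cannot produce this contradiction, because the alternative forcer of $z$ may be usable only in processes that route forces through vertices of $L'$ --- which is exactly what happens in the $C_4$ example above.
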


In addition to the previous results, we make use of the notion of forts.
The concept of a fort was introduced in \cite{FortsRef}, and extended to $\ell$-leaky forcing in \cite{og}. In a graph $G$, an $\ell$-{\it leaky fort} 
is a nonempty set of vertices $F \subseteq V(G)$ such that all but $\ell$ vertices in $V \setminus F$ are not adjacent to exactly one vertex in $F$. In other words, a fort is a nonempty set of white vertices for which, when all other vertices are blue, there is a configuration of leaks which prevents any of the fort vertices from being forced. Forts provide a convenient argument for a lower bound for the $\ell$-leaky forcing number in the following way. 

\begin{proposition}[\cite{og}, Proposition 2.3]\label{prop:LeakyIFFforts}
A set $S \subseteq V(G)$ is an $\ell$-leaky forcing set of $G$ if and only if $S$ intersects all $\ell$-leaky forts.    
\end{proposition}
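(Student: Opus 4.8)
The plan is to prove both directions by contraposition, using the guiding observation that an $\ell$-leaky fort $F$ is precisely a set on which forcing can stall: when every vertex of $F$ is white and all other vertices are blue, the only outside vertices that could possibly force into $F$ are those adjacent to exactly one vertex of $F$, and by definition there are at most $\ell$ of these, so the adversary can leak all of them. I will make this intuition rigorous in both directions.

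For the forward implication I would show that if $S$ misses some $\ell$-leaky fort $F$, then $S$ is not an $\ell$-leaky forcing set. Since $F \cap S = \emptyset$, every vertex of $F$ begins white. Let $B \subseteq V \setminus F$ be the set of vertices adjacent to exactly one vertex of $F$; the fort condition gives $|B| \le \ell$, so I would have the adversary designate all of $B$ as leaks. The crucial claim is that no vertex of $F$ is ever forced. Suppose otherwise and consider the first force that colors some $w \in F$ blue. Immediately before this force all of $F$ is still white, and the forcing vertex $b$ is blue with $w$ as its unique white neighbor. Because every vertex of $F$ is white at that instant, $b \notin F$, and $b$ cannot be adjacent to any second vertex of $F$ (such a vertex would be another white neighbor); hence $b$ is adjacent to exactly one vertex of $F$, so $b \in B$ is a leak and cannot force, a contradiction. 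Thus $F$ is never fully colored under this leak placement, so $S$ fails to be an $\ell$-leaky forcing set.

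For the reverse implication I would again argue the contrapositive: if $S$ is not an $\ell$-leaky forcing set, I will exhibit an $\ell$-leaky fort disjoint from $S$. By hypothesis there is a leak set $L$ with $|L| \le \ell$ for which the process started from $S$ stalls with a nonempty white set $W$. Since $S$ remains blue throughout, $W \cap S = \emptyset$. To verify that $W$ is an $\ell$-leaky fort, I would show that any $v \in V \setminus W$ adjacent to exactly one vertex of $W$ must be a leak: such a $v$ is blue and all of its white neighbors lie in $W$, so it has exactly one white neighbor, and if $v$ were not a leak it could force, contradicting that the process has stalled. Hence the outside vertices with exactly one neighbor in $W$ all lie in $L$, so there are at most $\ell$ of them, which is exactly the fort condition. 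This yields an $\ell$-leaky fort avoiding $S$, contradicting the assumption that $S$ meets every $\ell$-leaky fort.

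The step I expect to require the most care is the ``first force'' argument in the forward direction: everything hinges on the fact that at the moment a fort vertex is first colored, all of $F$ is still white, which is what forces the forcing vertex's unique-white-neighbor to pin it down as being adjacent to exactly one vertex of $F$ and therefore to being one of the at-most-$\ell$ designated leaks. The reverse direction is comparatively routine, amounting to recognizing the terminal white set of a stalled process as the desired fort and translating ``stalled'' into the local non-forcing condition at each blue vertex.
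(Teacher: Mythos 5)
Your proof is correct. Note that this paper does not prove Proposition~\ref{prop:LeakyIFFforts} itself---it is quoted from \cite{og}---but your two contrapositive arguments (placing leaks on the at most $\ell$ vertices having a unique neighbor in a fort disjoint from $S$, with the ``first force'' contradiction, and recognizing the terminal white set of a stalled process as an $\ell$-leaky fort) constitute exactly the standard proof of this equivalence, so there is nothing to add.
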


\section{Unicyclic Graphs}\label{sec:Unicyclic}\label{sec:unicyclic}
In this section, we characterize the $\ell$-leaky forcing number, $\Zl{G}$, for all unicyclic graphs, $G$, and all $\ell\ge 1$.  A \textit{unicyclic graph} is a connected graph which contains exactly one cycle. We show that unicyclic graphs have nearly the same minimum $1$-leaky and $2$-leaky forcing sets as trees. That is, the minimum $1$-leaky (respectively $2$-leaky) forcing set is the set of all vertices of degree at most 1 (respectively 2) but with up to two additional vertices. When $\ell \ge 3$, we show that unicyclic graphs have the same minimum $\ell$-leaky forcing sets as that of trees (see \cite{resl}). 

For classical zero forcing, Row proved the following.
\begin{thm}[\cite{row2012technique}, Theorem 4.6]
    For a unicyclic graph, $G$, $\Zf{G} = P(G)$ where $P(G)$ is the induced path cover number (as defined in \cite{AIM}).
\end{thm}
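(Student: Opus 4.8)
The plan is to prove the equality by establishing the two inequalities $P(G)\le \Zf{G}$ and $\Zf{G}\le P(G)$ separately; only the second will use that $G$ is unicyclic, as the first holds for every graph.

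For $P(G)\le \Zf{G}$, I would argue via forcing chains. Fix a minimum zero forcing set $S$ together with a sequence of forces realizing it. Recording which vertex forces each newly blue vertex decomposes $V(G)$ into vertex-disjoint \emph{forcing chains} $v_1v_2\cdots v_k$ (with $v_i$ forcing $v_{i+1}$); there are exactly $|S|=\Zf{G}$ of them, one starting at each vertex of $S$, and together they cover $V(G)$. The key observation is that each chain is an \emph{induced} path: if $v_iv_j\in E(G)$ with $j\ge i+2$, then at the step when $v_i$ forces $v_{i+1}$ the vertex $v_j$ is still white (it is colored only later, when $v_{j-1}$ forces it), so $v_i$ would have the two white neighbors $v_{i+1}$ and $v_j$, contradicting the color change rule. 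Thus the chains form an induced path cover of size $\Zf{G}$, whence $P(G)\le\Zf{G}$.

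For the reverse inequality $\Zf{G}\le P(G)$ I would induct on $|V(G)|$ and exploit the unicyclic structure. If $G$ is itself a cycle, then $P(C_n)=2=\Zf{C_n}$ by a direct check together with Proposition~\ref{prop:basicgraphresults}, so the base case holds. Otherwise $G$ has a leaf, and hence a pendant path attached at a cut vertex $u$; I would remove this pendant material, apply the induction hypothesis to the smaller (unicyclic or tree) graph $G'$, and then lift a minimum induced path cover of $G'$ to one of $G$, simultaneously lifting the constructed zero forcing set. Equivalently, starting from a minimum induced path cover $\mathcal P$ of $G$, I would orient each path by selecting one endpoint, take $S$ to be the selected endpoints, and verify that forcing from $S$ colors all of $G$: the orientations are propagated through the tree branches exactly as in the classical tree identity $\Zf{T}=P(T)$, sweeping each pendant branch inward and then outward along the paths of $\mathcal P$.

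The main obstacle is the unique cycle $C$. In the tree case one can always orient a minimum induced path cover so that every path forces along its orientation, but $C$ may force the covering paths to be broken, and one must ensure the forcing genuinely closes up around $C$. The crux is to show that a minimum induced path cover can be selected so that $C$ is covered in a controlled way — either by one path traversing all but one vertex of $C$, whose missing vertex is the endpoint of a second path, or by two paths entering $C$ — and that with the right endpoint choices the two blue fronts advancing along $C$ meet so that the final cycle vertex is forced rather than stalled with two white neighbors. Making this choice compatible with the orientations on the attached tree branches, so that the global forcing order is consistent, is the delicate point; Row's cut-vertex technique is the natural tool for reconciling the cycle's covering paths with those of the hanging trees.
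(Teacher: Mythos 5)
This statement is quoted by the paper from Row's work and is never proved in the paper itself, so your proposal must be judged against what a complete proof requires rather than against an in-paper argument. Your first half is correct and complete: the forcing-chain argument showing that the chains of a minimum zero forcing set form vertex-disjoint induced paths is standard, and your timing argument (when $v_i$ forces $v_{i+1}$, any later chain vertex $v_j$ with $j\ge i+2$ is still white, so an edge $v_iv_j$ would give $v_i$ two white neighbors) is exactly right. This yields $P(G)\le \Zf{G}$ for every graph, unicyclic or not.

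The genuine gap is the converse inequality $\Zf{G}\le P(G)$, which is the only direction where the unicyclic hypothesis matters and which is therefore the entire content of Row's theorem; what you offer there is a plan, not a proof, and you acknowledge as much (``the crux is to show\dots'', ``the delicate point''). Two steps are concretely missing. First, the inductive step is unverified: after deleting a pendant path attached at a cut vertex $u$, you must show that $P$ and $\Zf{\cdot}$ change in lockstep --- attaching pendant material can either leave both parameters unchanged or raise both by one, and nothing in your sketch rules out their moving differently; controlling this is precisely what the cut-vertex reduction in Row's paper accomplishes, and you invoke it by name without stating or applying it. Second, your alternative strategy --- orient a minimum induced path cover and take the selected endpoints as the forcing set --- requires proving that some minimum cover of a unicyclic graph admits an orientation in which the two blue fronts advancing along the unique cycle actually close up rather than stalling with a cycle vertex having two white neighbors; you identify this as the crux and then leave it unresolved. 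Since the hard direction is left at the level of intent, the proposal does not constitute a proof of the theorem.
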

Unlike this previous result for zero forcing, we will characterize the $\ell$-leaky forcing numbers based solely on the characteristics of the unicyclic graph without resorting to another graph parameter.

We now introduce particular notation and definitions we will use in the context of unicyclic graphs. Let $G$ be a unicyclic graph. The \textit{girth} of $G$, $g$, is the length of its only (and shortest) cycle, $C$. 
We will label the vertices of $C$ as $\{c_1,c_2,c_3, \ldots c_g\}$ and call these vertices \textit{cycle vertices}. For any cycle vertex $c_i$, we will denote the adjacent cycle vertices to be $c_{i-1}$ and $c_{i+1}$;
where applicable, the indices of the cycle vertices are taken to be modulo $g$ (e.g., $c_1$ is adjacent to $c_g$).
The vertices $c_{i-1}$ and $c_{i+1}$ will be called the \textit{cycle neighbors} of $c_i$. 

Note the graph $G-C$ (e.g., the graph resulting from removing all cycle vertices and the edges incident to them from $G$) is a forest. We will denote each tree in the forest as $T_{c_i,j}$ where $c_i$ is the unique cycle vertex incident to the tree in $G$ and $j$ is an index provided arbitrarily to distinguish the trees with the same incident cycle vertex. Note that the subgraph $T_{c_i,j}$ excludes the cycle vertex $c_i$. For a given unicyclic graph, $\mathcal{I}$ denotes the set of all possible tree indices $(i,j)$ for which there is a tree $T_{c_i,j}$.

Note that unicyclic graphs can be thought of as a tree with the addition of one extra edge. The following result is a known characterization for the $\ell$-leaky forcing set for trees. 

\begin{thm}[\cite{resl}, Theorem 4.4] \label{thm:LeakyTree}
    If $T$ is a tree and $~\mathcal{U}$ is the set of vertices in $T$ with degree at most $\ell$ where $\ell \ge 1$, then $\Zl{T} = \lvert \mathcal{U} \rvert$.
\end{thm}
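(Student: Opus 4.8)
The plan is to establish the two inequalities separately. The lower bound $\Zl{T} \ge |\mathcal{U}|$ is immediate: by Lemma \ref{lem:lowdegree} every $\ell$-leaky forcing set contains every vertex of degree at most $\ell$, hence contains all of $\mathcal{U}$. The real content is the upper bound, that $\mathcal{U}$ itself is an $\ell$-leaky forcing set. Rather than argue directly about forcing chains under all $\binom{n}{\ell}$ leak placements, I would invoke the fort characterization (Proposition \ref{prop:LeakyIFFforts}): it suffices to show that $\mathcal{U}$ intersects every $\ell$-leaky fort of $T$. Since $\mathcal{U}$ is exactly the set of vertices of degree at most $\ell$, the theorem reduces to the following purely structural claim.

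\medskip
\emph{Key Claim: every $\ell$-leaky fort $F$ of a tree $T$ contains a vertex of degree at most $\ell$.}
\medskip

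I would prove the claim by contradiction, assuming every vertex of $F$ has degree at least $\ell+1$ and then deriving a numerical contradiction from acyclicity. Write $n = |V(T)|$, let $c$ be the number of components of the induced forest $T[F]$, let $b$ be the number of components of $T[V\setminus F]$, and let $\partial F$ denote the set of edges with exactly one endpoint in $F$. Summing degrees over $F$ (counting each internal edge twice and each boundary edge once) gives $\sum_{v\in F}\deg(v) = 2(|F|-c)+|\partial F|$, so the degree assumption yields $(\ell+1)|F| \le 2(|F|-c)+|\partial F|$. The tree structure pins down $|\partial F|$ exactly: partitioning the $n-1$ edges of $T$ into those inside $F$, those inside $V\setminus F$, and the boundary edges gives $n-1 = (|F|-c) + (n-|F|-b) + |\partial F|$, whence $|\partial F| = c+b-1$.

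The remaining ingredient ties the fort condition to this count. Let $s$ be the number of vertices of $V\setminus F$ with exactly one neighbor in $F$; by the definition of an $\ell$-leaky fort, $s \le \ell$. Writing $d_F(w)$ for the number of neighbors of $w$ in $F$, and noting that since $T$ is connected and $F$ nonempty each of the $b$ outside components contributes at least one vertex adjacent to $F$, the number $p$ of vertices with $d_F(w)\ge 1$ satisfies $p \ge b$. Then $|\partial F| = \sum_{w\notin F} d_F(w) \ge s + 2(p-s) = 2p - s \ge 2b - s$, which together with $|\partial F| = c+b-1$ gives $b \le c + s - 1 \le c + \ell - 1$. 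Feeding $|\partial F| = c+b-1$ into the degree inequality reduces it to $(\ell-1)|F| \le b - c - 1 \le \ell - 2$, which is false for every $\ell \ge 1$ and $|F|\ge 1$ (it reads $0 \le -1$ when $\ell = 1$, and $(\ell-1)|F| \ge \ell-1 > \ell-2$ when $\ell \ge 2$). This contradiction proves the Key Claim, and hence $\Zl{T} \le |\mathcal{U}|$, completing the proof.

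I expect the main obstacle to be the Key Claim, specifically the step of expressing $|\partial F|$ and $b$ in terms that interact correctly with the fort's allowance of $\ell$ singleton neighbors; this is exactly where acyclicity is used, and the analogous estimate fails once a cycle is present, which foreshadows why the unicyclic case treated later genuinely needs additional vertices. The degenerate cases $F = V$ and $V\setminus F = \emptyset$ should be handled separately, but they are trivial since any tree on at least two vertices has a leaf, which already lies in $\mathcal{U}$.
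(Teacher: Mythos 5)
Your proof is correct, and it is worth noting at the outset that this paper never proves the statement at all: it is imported from \cite{resl} (their Theorem 4.4) as a black box, so there is no internal proof to compare against. I verified your counting argument in detail: with $e(T[F]) = |F|-c$ and $e(T[V\setminus F]) = (n-|F|)-b$, the partition of the $n-1$ tree edges does give $|\partial F| = c+b-1$; the degree-sum identity gives $(\ell+1)|F| \le 2(|F|-c)+|\partial F|$ under the contradiction hypothesis; connectivity of $T$ forces each of the $b$ outside components to contain a vertex adjacent to $F$, so $|\partial F| \ge 2b-s$ with $s \le \ell$ by the paper's fort definition, yielding $b \le c+\ell-1$; and substituting gives $(\ell-1)|F| \le \ell-2$, which indeed fails for all $\ell \ge 1$ and $|F| \ge 1$ (including the boundary case $F=V$, where the same inequality reads $(\ell-1)|F| \le -2$). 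Combined with Lemma \ref{lem:lowdegree} for the lower bound and Proposition \ref{prop:LeakyIFFforts} for the upper bound, this is a complete proof of $\Zl{T} = |\mathcal{U}|$. The route is genuinely different from the cited source: the argument in \cite{resl} is dynamic, building explicit forcing processes and exhibiting two distinct forces for each vertex in the spirit of Lemma \ref{lem:twodifferentleaky}, whereas you reduce the theorem to a static structural claim --- every $\ell$-leaky fort of a tree contains a vertex of degree at most $\ell$ --- and dispatch it by pure edge/degree counting. What your approach buys is self-containedness within exactly the two tools this paper already quotes (Lemma \ref{lem:lowdegree} and Proposition \ref{prop:LeakyIFFforts}), no case analysis over leak placements, and a transparent accounting of where acyclicity enters; as you observe, replacing $n-1$ by $n$ edges shifts the final inequality by one, which is consistent with the paper's unicyclic results in Section \ref{sec:Unicyclic}, where up to two additional vertices beyond the low-degree set are required. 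The dynamic approach of \cite{resl}, in exchange, produces the actual forcing schedules, which is the kind of information this paper repeatedly reuses (e.g., Lemmas \ref{lem:forcetree} and \ref{lem:unitree} extract forcing orders, not just set sizes, from the tree theorem).
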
 
Hence, by Lemma \ref{lem:lowdegree}, for $\ell \ge 1$, the minimum $\ell$-leaky forcing set is precisely the set $\mathcal{U}$ consisting of vertices with degree $\ell$ or less.

We now prove three lemmas which will be useful for the proofs of our characterizations for leaky forcing on unicyclic graphs.

\begin{lemma}  \label{lem:forcetree}
Let $\ell \ge 1$ and let $G$ be a unicyclic graph with cycle subgraph $C = \{c_1, \ldots, c_g\}$. Let $I \subseteq \mathcal{I}$ be a subset of tree indices with $|I| \le \ell$ and let $S$ be a set of vertices such that both of the following are true:
\begin{enumerate}
    \item  All vertices of degree $\ell$ or less of $G$ are in $S$ and 
    \item All vertices that are not in $S$ are in $\bigcup_{(i,j) \in I} T_{c_{i}, j}$.
\end{enumerate}  
Then, $S$ is an $\ell$-leaky forcing set of $G$.
\end{lemma}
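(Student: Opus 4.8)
The plan is to avoid simulating the forcing process directly and instead invoke the fort characterization in Proposition \ref{prop:LeakyIFFforts}: it suffices to show that $S$ meets every $\ell$-leaky fort. So I would argue by contradiction, supposing there is an $\ell$-leaky fort $F$ with $F \cap S = \emptyset$. Hypothesis (2) then forces $F \subseteq \bigcup_{(i,j) \in I} T_{c_i,j}$; in particular $F$ contains no cycle vertex, since every $c_i$ lies in $S$ (a cycle vertex is in no tree, so if it were outside $S$ it would violate (2)). Hypothesis (1), together with Lemma \ref{lem:lowdegree}, gives that every $v \in F$ has $\deg(v) \ge \ell + 1$. The goal is then to show that any such nonempty $F$ has strictly more than $\ell$ vertices of $V \setminus F$ adjacent to exactly one vertex of $F$ (call these \emph{exceptional}), contradicting that $F$ is an $\ell$-leaky fort.

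To count exceptional vertices I would root each tree $T_{c_i,j}$ at its unique vertex adjacent to $c_i$, regarding $c_i$ as the parent of that root. Call $v \in F$ a \emph{bottom} vertex if no proper descendant of $v$ lies in $F$. Each bottom vertex $v$ has exactly one parent (a tree vertex, or $c_i$), hence $\deg(v) - 1 \ge \ell$ children, none of which lie in $F$; moreover every neighbor of such a child $w$ other than $v$ is a descendant of $v$ and so lies outside $F$, whence $w$ is adjacent to exactly one vertex of $F$. Thus each bottom vertex contributes at least $\ell$ exceptional vertices, and distinct bottom vertices have disjoint child sets. Consequently, if $F$ has at least two bottom vertices we obtain at least $2\ell \ge \ell + 1$ exceptional vertices (using $\ell \ge 1$), a contradiction.

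The remaining, and main, case is when $F$ has a single bottom vertex $v^{*}$. Then no two vertices of $F$ are incomparable in the ancestor order (incomparable vertices would route down to two distinct bottom vertices), so $F$ is a chain on a single root-to-$v^{*}$ path, with a unique shallowest vertex $u_0$; in particular $F$ lies in one tree. Here the $\ge \ell$ children of $v^{*}$ give $\ell$ exceptional vertices, and I would produce one more from the parent $p$ of $u_0$ (a tree vertex, or the cycle vertex $c_i$ when $u_0$ is a root). Because $u_0$ is the shallowest vertex of $F$, the only neighbor of $p$ in $F$ is $u_0$: the siblings of $u_0$ are not ancestors of $v^{*}$ and the parent of $p$ is shallower than $u_0$, so none of them lies in the chain, and any other tree-root at $c_i$ lies outside the single tree containing $F$. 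Hence $p$ is exceptional, and it is distinct from the strictly deeper children of $v^{*}$, giving $\ell + 1$ exceptional vertices and again a contradiction.

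In every case $F$ has more than $\ell$ exceptional vertices, so no $\ell$-leaky fort avoids $S$; by Proposition \ref{prop:LeakyIFFforts}, $S$ is an $\ell$-leaky forcing set. I expect the single-bottom-vertex case to be the crux: counting children alone yields only $\ell$ exceptional vertices, so the argument must also exploit the top of the chain, and it is precisely the fact that $c_i \in S$ (so $c_i \notin F$) that guarantees the boundary vertex $p$ exists and is genuinely exceptional. I would note as an aside that this counting never uses $|I| \le \ell$; that hypothesis is convenient for the complementary direct-forcing viewpoint but is not needed for the fort argument.
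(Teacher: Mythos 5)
Your proof is correct, but it takes a genuinely different route from the paper's. The paper's proof is a two-line reduction: it groups the trees $T_{c_i,j}$ with $(i,j)\in I$ by their cycle vertex and applies the tree characterization (Theorem \ref{thm:LeakyTree}) to each induced subtree $G\left[T_{c_i,j_1}\cup\cdots\cup T_{c_i,j_{k_i}}\cup\{c_i\}\right]$; this works because the white vertices are confined to these subtrees, tree-vertex degrees are unchanged there, every neighbor of $c_i$ outside the subtree is blue (so internal forces remain valid in $G$), and each subtree sees at most $\ell$ of the leaks. You never touch Theorem \ref{thm:LeakyTree}: you go straight to the fort criterion of Proposition \ref{prop:LeakyIFFforts} and show that any fort $F$ disjoint from $S$ has at least $\ell+1$ vertices outside $F$ with exactly one neighbor in $F$ --- at least $2\ell$ of them when $F$ has two or more bottom vertices, and otherwise the $\ell$ children of the unique bottom vertex $v^{*}$ plus the parent $p$ of the shallowest fort vertex, where the chain structure and the fact that all cycle vertices lie in $S$ guarantee that $p$ exists and is exceptional. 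I checked the details and they hold: the vertex of $T_{c_i,j}$ adjacent to $c_i$ is unique (else $G$ would have a second cycle), so the rooting is well defined; child sets of distinct vertices are disjoint; incomparable fort vertices force two distinct bottom vertices; and $p$ is distinct from the children of $v^{*}$. What each approach buys: the paper's proof is shorter and modular, reusing the known tree result, while yours is self-contained and, as you observe, establishes a slightly stronger statement, since $|I|\le\ell$ is never used --- it suffices that $S$ contain all vertices of degree at most $\ell$ together with all cycle vertices. One cosmetic gap: in the case $p=c_i$ you should also note that the two cycle neighbors of $c_i$ are not in $F$, but this is immediate from your opening observation that $F$ contains no cycle vertex.
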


\begin{proof}
    For each $i$ such that $(i,j)\in I$, let $j_1,\dots,j_{k_i}$ be the set of indices such that $(i,j_1),\dots,(i,j_{k_i})\in I$. Apply Theorem \ref{thm:LeakyTree} individually to each of the subgraphs $G \left[T_{c_i,j_1}\cup\dots\cup T_{c_i,j_{k_i}} \cup \{c_i\} \right]$ for each $i$.
\end{proof}

 \begin{lemma} \label{lem:specificzftree}
 Let $T$ be a tree with at least $2$ vertices and choose a leaf, $v^\plainell$. Let $L$ be the set of leaves. Then, $L \setminus \{v^\plainell\}$ is a zero forcing set of $T$.
 \end{lemma}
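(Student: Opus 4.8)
The plan is to root $T$ at the distinguished leaf $v^\plainell$ and argue that the forcing flows \emph{up} the tree, from the remaining leaves toward the root, with $v^\plainell$ forced last. Since $v^\plainell$ is a leaf it has a single neighbor $p$, which becomes the root's only child; every other vertex $w \ne v^\plainell$ then has a well-defined parent (its neighbor on the unique $w$--$v^\plainell$ path) together with a possibly empty set of children. The key observation is that a vertex $u \ne v^\plainell$ has no children in this rooting exactly when it is a leaf of $T$, and every such leaf lies in $L \setminus \{v^\plainell\}$ and so is blue at the start.

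First I would establish, by induction moving up the tree, that every vertex $w \ne v^\plainell$ eventually becomes blue. Processing the vertices in order of decreasing distance from the root ensures that when we reach $w$, all of its descendants are already blue. In particular, if $w$ is not itself a childless leaf, then any child $w_i$ of $w$ is blue and has all of \emph{its} neighbors blue except possibly $w$: its own children are blue by the ordering, and its only remaining neighbor is $w$. Hence $w_i$ has $w$ as its unique white neighbor and forces it. The base case is precisely the observation above that childless vertices (the non-$v^\plainell$ leaves) are blue to begin with.

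Once every vertex other than $v^\plainell$ is blue, I would finish by forcing $v^\plainell$ itself: its neighbor $p$ is blue, and all of $p$'s other neighbors are its children, which are already blue, so $v^\plainell$ is the unique white neighbor of $p$ and gets forced. This colors all of $T$ blue, so $L \setminus \{v^\plainell\}$ is a zero forcing set. The degenerate case $T = P_2$, where $L \setminus \{v^\plainell\}$ is a single vertex, is handled by this final step alone.

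The one point needing care --- and the only real obstacle --- is verifying that these forces can genuinely be \emph{scheduled}, rather than merely asserting that each vertex ``should'' be forced by a child. The difficulty is that for a child $w_i$ to force its parent $w$, all of $w_i$'s own children must already be blue at that moment. Ordering the forces by decreasing distance from $v^\plainell$ (equivalently, inducting on the height of the subtree rooted at $w$) resolves this, since descendants are strictly farther from the root and are processed first; monotonicity of forcing, namely that blue vertices stay blue, then guarantees the schedule remains consistent as we proceed toward the root.
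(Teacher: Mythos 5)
Your proof is correct and takes essentially the same approach as the paper: root the tree at the distinguished leaf $v^\plainell$ and force upward from the remaining leaves by induction on depth, with $v^\plainell$ forced last by its unique neighbor. The paper phrases this as induction on the height of the rooted tree (all leaves force their parents, then recurse on the remaining components), and your level-by-level scheduling is simply a more explicitly worked-out version of that same bottom-up argument.
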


\begin{proof}
Let $v^\plainell$ be the root of $T$. We proceed by induction on the height of the tree, $h$, measured from the root $v^\plainell$. For $h=1$, $T = S_n$, and the result follows. For the induction step, observe that coloring all leaves blue except $v^\plainell$ will allow all other leaves to force their parents in the tree. After this forcing step, observe that the set of blue vertices with at least one white neighbor is a forest. In each component, all but at most one leaf is blue and the height of the component is at most $h-1$; hence, applying the induction hypothesis completes the proof.
\end{proof}

\begin{lemma}  \label{lem:unitree}
Let $G$ be a unicyclic graph with cycle subgraph $C$. Then, coloring all of the leaves of $G$ blue is sufficient to force each subtree $T_{c_i,j}$ that does not contain a leak. Furthermore, each $T_{c_i,j}$ forced in this way will also result in $c_i$ being forced. 
\end{lemma}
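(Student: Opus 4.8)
The plan is to reduce the claim about each individual subtree to a single application of Lemma~\ref{lem:specificzftree} on an auxiliary tree. Fix a subtree $T_{c_i,j}$ that contains no leak, and let $r$ denote the unique vertex of $T_{c_i,j}$ adjacent to $c_i$ in $G$; this vertex is unique because a second edge from $c_i$ into $T_{c_i,j}$ would create a second cycle, contradicting that $G$ is unicyclic. Consider the induced subgraph $T' := T_{c_i,j} + c_i$. Since $T_{c_i,j}$ is a tree joined to $c_i$ by the single edge $rc_i$, the graph $T'$ is itself a tree on at least two vertices, and it is an induced subgraph of $G$ in which every vertex of $T_{c_i,j}$ has exactly the same neighborhood as in $G$ (all cycle edges leaving $T_{c_i,j}$ go to $c_i$, and only through $r$).

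First I would identify the leaves of $T'$. The vertex $c_i$ is a leaf of $T'$, since its only neighbor there is $r$. I would then check by a short degree comparison that the remaining leaves of $T'$ are exactly the leaves of $G$ lying in $T_{c_i,j}$: a vertex $x \ne r$ of $T_{c_i,j}$ has the same degree in $T'$, in $T_{c_i,j}$, and in $G$, while $r$ is a leaf of $T'$ (equivalently of $G$) precisely in the degenerate case $T_{c_i,j} = \{r\}$. Hence $L(T') \setminus \{c_i\}$ equals the set of leaves of $G$ contained in $T_{c_i,j}$, all of which are blue by hypothesis.

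Applying Lemma~\ref{lem:specificzftree} to $T'$ with distinguished leaf $v^\plainell = c_i$ shows that $L(T')\setminus\{c_i\}$ is a zero forcing set of $T'$, so these blue leaves force all of $T'$ — including $c_i$ — to become blue. The key step is to transfer this forcing sequence into $G$ with leaks placed anywhere outside $T_{c_i,j}$. Here I would use two observations. First, $c_i$ never performs a force: its only neighbor in $T'$ is $r$, and since $c_i$ is not initially blue it must be forced by $r$, so by the time $c_i$ turns blue its only neighbor $r$ is already blue and $c_i$ has no white neighbor left. Thus every force is carried out by a vertex of $T_{c_i,j}$, none of which is a leak. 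Second, because each such forcing vertex has an identical neighborhood in $T'$ and in $G$, replaying exactly these forces in $G$ keeps the coloring of $T'$ identical to the $T'$-process at every step, so each force remains legal in $G$ regardless of leaks outside $T_{c_i,j}$ (in particular, regardless of whether $c_i$ is a leak).

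I expect the main obstacle to be this last transfer argument rather than the leaf bookkeeping: one must argue cleanly that a forcing sequence valid in the isolated tree $T'$ stays valid inside the larger leaky graph $G$. The crux is that the forcing rule for a vertex depends only on its neighborhood, that neighborhood is contained in $T'$ for every vertex of $T_{c_i,j}$, and the only vertex of $T'$ whose $G$-neighborhood is strictly larger, namely $c_i$, is exactly the vertex that never needs to force. This renders the single leak that could matter — a leak at $c_i$ — irrelevant, yielding both conclusions: $T_{c_i,j}$ is forced, and the final force $r \to c_i$ (performed by the non-leak $r$) forces $c_i$ as well.
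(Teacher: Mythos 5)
Your proposal is correct and follows essentially the same route as the paper: both reduce to the tree $T_{c_i,j}+c_i$ and invoke Lemma~\ref{lem:specificzftree} with $c_i$ as the distinguished leaf (root), concluding that $c_i$ is forced last. The only difference is that you spell out the transfer step the paper leaves implicit --- that every forcing vertex lies in $T_{c_i,j}$ (hence is not a leak), has identical neighborhoods in $T_{c_i,j}+c_i$ and in $G$, and that $c_i$ itself never forces, so even a leak at $c_i$ is harmless --- which is a worthwhile clarification but not a different argument.
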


\begin{proof}
Color all of the leaves of $G$. Consider the tree $T_{c_i,j} + c_i$.
Necessarily, this means all of the leaves of $T_{c_i,j} + c_i$ will be initially colored, except for $c_i$. We now apply the proof of Lemma \ref{lem:specificzftree} (using $c_i$ as the root) without considering any forces from any other $T_{c_i,j}$. This necessarily guarantees that $T_{c_i,j} + c_i$ will be forced with $c_i$ forced last. 
\end{proof}

We are now ready to characterize the $1$-leaky forcing number for unicyclic graphs. First, we will consider the case with girth $3$.

\begin{thm}\label{thm:3Cycle} 
    Let $G$ be a unicyclic graph with girth $3$ and $Q = \{v\in V(G) : \deg(v)= 1 \}$ such that $|Q|=q$, then 
    \[\Zone{G} = \begin{cases}
    q  & \text{if there are at least two cycle vertices of degree at least $4$ or}\\
    &\text{\quad all cycle vertices have degree at least $3$,} \\
     q +2 & \text{if there is at most one cycle vertex of degree $3$ and at least two cycle vertices of degree $2$,} \\ 
     q +1 & \text{otherwise.}\\
     \end{cases}
\]
\end{thm}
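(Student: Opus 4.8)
The plan is to pin $\Zone{G}$ to $q$ plus the least number of cycle vertices one must adjoin to $Q$, bounding that number below with forts and above with explicit forcing orders. Label the triangle $c_1,c_2,c_3$ and, for each $i$, let $t_i$ be the number of trees $T_{c_i,j}$ hanging at $c_i$, so $\deg(c_i)=2+t_i$; thus the three degree conditions in the statement read $t_i=0$, $t_i=1$, and $t_i\ge 2$. By Lemma~\ref{lem:lowdegree} every $1$-leaky forcing set contains all of $Q$, so $\Zone{G}\ge q$ and it remains only to count the extra vertices. By Proposition~\ref{prop:LeakyIFFforts}, a set containing $Q$ is a $1$-leaky forcing set exactly when it meets every $1$-leaky fort; since a fort meeting $Q$ is automatically met, the extra vertices are precisely a hitting set for the \emph{leaf-free} forts, and I will show such a hitting set can be taken on the cycle.

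For the upper bounds I would exhibit a set of the claimed size and check that it survives an arbitrary single leak, using Lemmas~\ref{lem:unitree} and~\ref{lem:forcetree} as the engine: colouring all leaves forces every tree, together with its attaching cycle vertex, that avoids the leak, while a leaf-coloured tree whose attaching vertex is already blue is forced even with one internal leak, its $1$-leaky forcing number being its leaf-count by Theorem~\ref{thm:LeakyTree}. The organizing idea is a \emph{robust seed}: a cycle vertex carrying at least two trees is forced wherever the single leak sits, and once two cycle vertices are blue the forcing runs around the triangle and then clears the unique compromised tree from both ends. When two cycle vertices have degree $\ge 4$, or when every cycle vertex carries a tree, I would show $Q$ alone forces $G$, since at least two cycle vertices are seeded in spite of the leak and then force the third; otherwise I adjoin the one or two cycle vertices needed to meet the surviving pair-forts and rerun the same propagation.

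For the matching lower bounds the key computation is the classification of the cycle forts. Because the cycle is a triangle, for a pair $\{c_i,c_j\}$ the third cycle vertex is adjacent to both, so the only outside vertices with a unique neighbour in the pair are the $t_i+t_j$ tree-roots at $c_i$ and $c_j$; hence $\{c_i,c_j\}$ is a $1$-leaky fort iff $t_i+t_j\le 1$, the whole cycle is a fort iff $t_1+t_2+t_3\le 1$, and no singleton is ever a fort. Reading off the minimum hitting number of the pair-forts then produces the trichotomy: if all three pairs are forts, their hitting number is the triangle's vertex-cover number $2$, forcing two extra vertices; if some but not all pairs are forts, a single common vertex suffices; and if no pair is a fort, no extra vertex is needed. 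Matching each profile $(t_1,t_2,t_3)$ to its hitting number then completes the case analysis.

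The main obstacle is the upper-bound casework: one must certify completion for \emph{every} leak location, and the delicate placements are a leak on a cycle vertex, or inside the lone tree of a degree-$3$ cycle vertex, where a prospective seed is destroyed and the adjoined vertices must be chosen so that propagation still closes up. A secondary technical point, needed to guarantee the hitting number is realised on the cycle, is to rule out cheaper leaf-free forts built from internal tree vertices; here I would show that any leaf-free fort meets the cycle in a set already constrained by the pair-fort analysis, so the same choice of extra cycle vertices hits it.
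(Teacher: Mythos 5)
Your proposal is correct, and its upper-bound half coincides with the paper's proof: color $Q$ plus the designated cycle vertices, apply Lemma \ref{lem:unitree} so that every tree avoiding the leak forces itself and its attaching cycle vertex, run the forcing around the triangle once two cycle vertices are blue, and finish the compromised tree with Lemma \ref{lem:forcetree}. The difference is in the lower bounds: the paper argues its Cases 3 and 4 by exhibiting a leak placement after which forcing stalls, and invokes forts only in its Case 5, whereas you derive all lower bounds uniformly from the criterion that $\{c_i,c_j\}$ is a $1$-leaky fort if and only if $t_i+t_j\le 1$, where $t_i=\deg(c_i)-2$ counts the trees at $c_i$ (the third cycle vertex sees both of $c_i,c_j$, so only the $t_i+t_j$ tree roots have a unique neighbor in the pair). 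Reading off the hitting number of the pair-forts --- $2$ exactly for the profiles $(0,0,0)$ and $(1,0,0)$, $1$ when some but not all pairs are forts, $0$ when none are --- reproduces the theorem's trichotomy precisely; this is tidier than the paper's case-by-case stalling arguments, and it even disambiguates the $q+2$ condition of the statement, which read literally would include the profile $(4,2,2)$ that in fact yields $q+1$. One simplification: your ``secondary technical point'' (ruling out leaf-free forts built from internal tree vertices) is unnecessary. The pair-forts are needed only for the lower bounds; since your upper bounds are established by direct forcing rather than by hitting forts, Proposition \ref{prop:LeakyIFFforts} then guarantees for free that your chosen sets meet every fort, so no classification of all leaf-free forts is required.
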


\begin{proof}

We will denote the cycle vertices $c_u$, $c_v$, and $c_w$. Without loss of generality we will label the vertices so that $\deg(c_u) \ge \deg(c_v) \ge \deg(c_w) \ge 2$. Also, note that by Lemma \ref{lem:lowdegree}, every $1$-leaky forcing set $S$ of $G$ must contain all the leaves of $G$, i.e., $Q\subseteq S$.

{\bf Case 0:}  $\deg(c_u) = \deg(c_v) = \deg(c_w) = 2$.

In this case, $G\cong C_3$ and $\Zone{C_3} = 2$ by Proposition \ref{prop:basicgraphresults}. Then, since $q=0$, we have $\Zone{G} = q+2$.

For the remainder of the argument, we assume that at least one cycle vertex has degree at least 3 (e.g., $\deg(c_u) \ge 3$).

\textbf{Case 1:} $\deg(c_u) \ge \deg(c_v) \ge \deg(c_w) \ge 3$. 

We will show that $S=Q$ is a 1-leaky forcing set. If the leak is on $c_u$, $c_v$, or $c_w$, apply Lemma \ref{lem:unitree} to force each tree $T_{c_i,j}+c_i$ for all $i \in \{u,v,w\}$ and all applicable $j$. Since every cycle vertex has a tree, we are done.

    Now suppose that the leak is within $T_{c_i,j}$ for some $i \in \{u,v,w\}$. The other vertices of the cycle will necessarily be forced, and $c_i$ will be forced by one of its cycle neighbors. Then, apply Lemma \ref{lem:forcetree} to force $T_{c_i,j}$ (and hence all of $G$). Thus, $\Zone{G} = q$. 
    
    \textbf{Case 2:} $\deg(c_u) \ge \deg(c_v) \ge 4$ and $\deg(c_w) = 2$. 

    We will show that $S=Q$ is a 1-leaky forcing set. If the leak is on some $c_x$ for $x\in\{u,v,w\}$, apply Lemma \ref{lem:unitree} to force $T_{c_i,j}$ and $c_i$ for $i \in \{u,v\}$ and all applicable $j$. There is a cycle vertex $c_y \ne c_x$ with $\deg(c_y) \ge 4$ which can force $c_w$ and we are done.

If the leak is in $T_{c_i,j}$ for $i \in \{u,v\}$, apply Lemma \ref{lem:forcetree} to force $T_{c_i,j}$ (and hence all of $G$). Thus, $\Zone{G} = q$. 

     \textbf{Case 3:} $\deg(c_u) \ge 4$ and  $\deg(c_v) = \deg(c_w) = 2$. 
    
   We will first show that $Q$ is not a zero forcing set (and thus not a $1$-leaky forcing set). Applying Lemma \ref{lem:unitree} will force all $T_{c_u,j}$ and $c_u$. Thereafter, no forces are possible as $c_u$ has both remaining white vertices as neighbors. Hence $\Zone{G} > q$.
    
    We now show that $S=Q\cup \{c_w\}$ is a $1$-leaky forcing set. Suppose that the leak is in some $T_{c_u,j}$. Then applying Lemma \ref{lem:unitree} will force all  $T_{c_u,k}$  for $k \ne j$ and $c_u$. Thereafter, $c_w$ can force $c_v$, and applying Lemma \ref{lem:forcetree} will force $T_{c_u,j}$ and hence all of $G$.

    If the leak is on a cycle vertex, then applying Lemma \ref{lem:unitree} will force all $T_{c_u,j}$. Thereafter, either $c_u$ or $c_w$ will force $c_v$. Hence, $q < \Zone{G}\le q+1$, so $\Zone{G} = q+1$.

    \textbf{Case 4:} $\deg(c_u) \ge 3$, $\deg(c_v) = 3$, and  $\deg(c_w) = 2$.

    We will first show that $Q$ is not a $1$-leaky forcing set.
    Suppose the leak is within $T_{c_v, 1}$. Applying Lemma \ref{lem:unitree} will force all $T_{c_u,j}$ as well as $c_u$. However, $c_v$ will not be forced. Since forcing $c_w$ requires both $c_u$ and $c_v$ to be blue, $G$ will not be forced, so $\Zone{G} > q$.

    We now show that $S=Q\cup \{c_w\}$ is a $1$-leaky forcing set of $G$. If the leak is on $c_u, c_v$, or $c_w$, then Lemma \ref{lem:unitree} will guarantee that all of the $T_{c_u,j}$ and all of $T_{c_v,1}$ and both $c_u$ and $c_v$ will be forced.

    If the leak is within some $T_{c_u,j}$, say without loss of generality that it is in $T_{c_u,1}$. Then, by Lemma \ref{lem:unitree}, all of the $T_{c_v,j}$ (for $j\ne 1$), all of $T_{c_v,1}$, and $c_v$  will be forced. Hence, $c_w$ forces $c_u$ and by Lemma \ref{lem:forcetree}, $T_{c_u,1}$ will be forced (and hence all of $G$). The result follows similarly if the leak is within $T_{c_v,1}$. Hence $q < \Zone{G} \le q+1$, so $\Zone{G} = q+1$.

     \textbf{Case 5:} $\deg(c_u) = 3$ and  $\deg(c_w) = \deg(c_v) = 2$.

    We first argue that $\Zone{G}\ge q+2$. Suppose the leak is on the vertex in $T_{c_u,1}$ adjacent to $c_u$. Then upon coloring $Q$ blue, $T_{c_u,1}$ will be forced with $c_u, c_w, c_v$ remaining white. Further, if any one of $c_u, c_w$,  or $c_v$ are additionally blue, then no more forces can occur. It follows that $\{c_u, c_w\}, \{c_u, c_v\}$, and $\{c_v, c_w\}$ are each $1$-leaky forts. Hence, by Proposition \ref{prop:LeakyIFFforts}, at least two of $c_u, c_w$, and $c_v$ must in a 1-leaky forcing set, so $\Zone{G} \ge q+2$.

    Finally, we show $S=Q \cup \{c_w, c_v\}$ is a $1$-leaky forcing set of $G$. Necessarily, at least one $c_v$ or $c_w$ will force $c_u$. Thereafter, Lemma \ref{lem:forcetree} will guarantee that the remainder of $T_{c_u,1}$ will be forced. Hence, $\Zone{G} = q+2$.\end{proof}

Now we turn to the case of unicyclic graphs where the girth is at least 4.

\begin{thm} \label{thm:4+cycle}
 Let $G$ be a unicyclic graph with girth at least 4 and $Q = \{v\in V(G) : \deg(v)= 1 \}$ such that $|Q|=q$, then 
\[ \Zone{G} =\begin{cases}
     q & \text{if either} \\ & \text{\quad  there are at least two disjoint pairs of adjacent cycle vertices with degree at least 3 or} \\ & \text{\quad   there is a cycle vertex of degree at least 4 with either } \\ & \text{\quad \quad  at least one cycle neighbor has degree at least 4 or} \\ & \text{\quad\quad both cycle neighbors have  degree 3;} \\
     q+2 & \text{if either} \\ & \text{\quad all of the following apply:} \\ & \text{\quad \quad there are exactly two cycle vertices of degree $3$ and} \\ & \text{\quad \quad  they are adjacent and}\\
     &\text{\quad \quad all other cycle vertices have degree $2$ or} \\ &\text{\quad both of the following apply:} \\ &\text{\quad \quad every cycle vertex has degree at most 3 and} \\&\text{\quad\quad every pair of cycle vertices of degree 3 has distance at least 3 from each other;}\\
      q+1 & \text{otherwise.}
     \end{cases}
\]
\end{thm}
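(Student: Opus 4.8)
The plan is to reduce the entire problem to a question about forcing the cycle $C$ alone. Since every leaf has degree $1$, Lemma~\ref{lem:lowdegree} forces $Q\subseteq S$, so throughout I color $Q$ blue and only count how many additional vertices are needed. The central tool will be the following reduction. Fix a leak $p$. By Lemma~\ref{lem:unitree}, coloring $Q$ forces every leaf-tree $T_{c_i,j}$ not containing $p$ together with its attaching cycle vertex $c_i$; and by Lemma~\ref{lem:forcetree} (with $\ell=1$) once the whole cycle is blue the single remaining leaky tree is completed. Hence the only real question is whether the cycle gets forced. Define the \emph{effective blue set} $B(p)\subseteq V(C)$ to consist of every cycle vertex placed in $S$, every cycle vertex of degree $\ge 4$, and every degree-$3$ cycle vertex whose unique tree does not contain $p$. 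I will prove the sub-claim that a cycle $C_g$ with $g\ge 4$, a prescribed blue set, and one vertex forbidden to force becomes entirely blue if and only if the blue set contains two adjacent vertices (if the forbidden vertex lies in an adjacent blue pair, its partner still pushes all the way around, using the forbidden vertex only as a blue anchor). Combining these, $S$ is a $1$-leaky forcing set if and only if, for every leak $p$, the set $B(p)$ contains two adjacent cycle vertices.

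With this reduction the degree conditions become transparent. A single leak can remove from the branch set at most one degree-$3$ vertex (a leak inside its unique tree); degree-$\ge 4$ vertices are immune, and a leak placed on a cycle vertex removes nothing. Thus $Q$ itself (the value $q$) is a $1$-leaky forcing set iff the branch vertices contain two adjacent vertices and this survives deleting any one degree-$3$ branch vertex. I will show this is exactly the stated $q$-condition: if all adjacent branch pairs shared a common degree-$3$ vertex, deleting it would destroy them all, so either two vertex-disjoint pairs exist (first clause), or, by a Helly-type argument on the cycle (girth $\ge 4$ forbids triangles), all pairs meet in a single vertex $c_i$ which must then have degree $\ge 4$ and two branch neighbours, giving the second clause. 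This settles every $q$ case; the matching lower bound $\Zone{G}\ge q$ is immediate from $Q\subseteq S$.

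For the remaining cases I will produce explicit sets for the upper bounds and defeat all small sets for the lower bounds, all through the reduction. When a degree-$\ge 4$ vertex $c_i$ is present (outside (a),(b)), adding one neighbour $c_{i\pm1}$ to $Q$ yields a permanently blue adjacent pair, so $\Zone{G}\le q+1$; when instead every branch vertex has degree $3$, I add either the central vertex of three consecutive branch vertices, the single degree-$2$ vertex bridging a distance-$2$ branch pair, or a neighbour of a ``spare'' branch vertex to manufacture a second disjoint robust pair, so that in each such subcase one extra vertex makes some adjacent pair survive every leak, giving $q+1$. For the two extremal families I show $q+2$ is forced: in case~(a) the only adjacent pair is a degree-$3$/degree-$3$ pair $\{c_a,c_{a+1}\}$, and since girth $\ge 4$ means $c_a,c_{a+1}$ have no common neighbour, no single added vertex can be adjacent to both, so the leaks inside $T_{c_a,j}$ and inside $T_{c_{a+1},j'}$ cannot both be repaired; in case~(b) there is no adjacent branch pair at all and any one added vertex creates at most one fragile pair with a degree-$3$ vertex, which the leak in that vertex's tree destroys (here I check $x$ a branch vertex, a cycle vertex adjacent or non-adjacent to a branch vertex, and a tree vertex). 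The matching upper bound $q+2$ in both families is $Q$ together with two adjacent cycle vertices, a permanently blue pair. Equivalently, the lower bounds can be packaged as a family of $1$-leaky forts contained in $C$ with empty common intersection (e.g.\ the sets $C\setminus\{c_a\}$, $C\setminus\{c_b\}$, and $C\setminus\{c_a,c_k\}$-type sets in case~(a)), which by Proposition~\ref{prop:LeakyIFFforts} automatically handles additions of tree vertices as well.

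The main obstacle is the bookkeeping in the ``otherwise'' ($q+1$) regime: one must verify that the single chosen vertex repairs \emph{every} leak placement simultaneously, and carve ``otherwise'' into the right subcases (presence of a degree-$\ge 4$ vertex; three consecutive branch vertices with a degree-$3$ centre; a distance-$2$ pair; an adjacent pair together with a spare branch vertex). The conceptual crux that keeps the count honest is the girth-$\ge 4$ fact that two adjacent vertices have no common neighbour: this is precisely why an adjacent degree-$3$ pair (case~(a)) needs two repairs while a distance-$2$ pair needs only one, separating the $q+2$ family (a) from its $q+1$ neighbours.
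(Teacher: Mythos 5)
Your proposal is correct and follows essentially the same route as the paper: both use Lemmas \ref{lem:lowdegree}, \ref{lem:unitree}, and \ref{lem:forcetree} to reduce the problem to the criterion that, for every leak placement, two adjacent blue cycle vertices must exist after the initial round of forcing within the trees, and both then run the same degree-based case analysis with the same extra-vertex constructions and the same worst-case leak (the tree vertex adjacent to a degree-$3$ cycle vertex). Your $B(p)$ formalization and the Helly-type packaging of the $\Zone{G}=q$ condition are a tidier restatement of what the paper establishes by direct exhaustion of cases, but the underlying argument is the same.
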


\begin{proof} Note by Lemma \ref{lem:lowdegree}, every $1$-leaky forcing set $S$ of $G$ must contain all the leaves of $G$, i.e., $Q\subseteq S$. Also, observe that for a vertex $c_i$ on the cycle $C$, if $c_i$ has degree $3$, placing the leak on the vertex of $T_{c_i,1}$ which is adjacent to $c_i$ prevents $c_i$ from being forced except perhaps by a cycle neighbor. If $c_i$ has degree $4$, by Lemma \ref{lem:unitree}, the leak cannot prevent it from being forced by some $T_{c_i,j}$.

By Lemma \ref{lem:unitree}, if the leak appears on any of the cycle vertices $c_i$, then all vertices of the graph will be forced to be blue except the cycle vertices of degree 2. In this case, the remaining cycle vertices can still be forced as long as there are two adjacent cycle vertices which are blue after the initial forcing in the trees. This happens by forcing in both directions around the cycle. While one direction of forcing may be interrupted by the leak, the other direction will continue until the cycle is completely forced. If the leak appears on a vertex of $T_{c_i,j}$, then applying Lemma \ref{lem:unitree} will force all $T_{c_i',j'}$ for all pairs $(i',j')\not=(i,j)$. In this case, all vertices of the graph will be blue except for the cycle vertices of degree $2$, and possibly some vertices in that particular $T_{c_i,j}\cup\{c_i\}$. Again, the remaining cycle vertices can still be forced as long as there are two adjacent cycle vertices which are blue after the initial forcing in the trees. After the cycle is forced, Lemma \ref{lem:forcetree} can be applied to force the remaining white vertices in  $T_{c_i,j}$.

Thus, to demonstrate that a set $S$ is a $1$-leaky forcing set, it remains to show that we can always find two adjacent blue cycle vertices after an initial round of forcing within the trees. We now consider several cases:

\textbf{Case 0:} Every cycle vertex has degree 2.

In this case, $G\cong C_n$ for which $\Zone{G} = 2$ by Proposition \ref{prop:basicgraphresults}. Then, since $q=0$, we have $\Zone{G} = q+2$.

For the remaining cases, we assume now that at least one cycle vertex has degree at least 3.

\textbf{Case 1:} $G$ contains at least two disjoint pairs of adjacent cycle vertices with degree at least 3.

We will show that $S=Q$ is a $1$-leaky forcing set. First, assume there are at least two disjoint pairs of adjacent vertices on $C$ with degree at least 3 and call them $c_u, c_{u+1}, c_v,$ and $c_{v+1}$. Note that $u+1<v$, since the pairs are disjoint. If the leak is on a cycle vertex, $c_u, c_{u+1}, c_v,$ and $c_{v+1}$ will be forced after applying Lemma \ref{lem:unitree}. Otherwise, suppose the leak appears on $T_{c_i,j}$. Then applying Lemma \ref{lem:unitree} will force all $T_{c_{i'},j'}$ for all pairs $(i',j')\not=(i,j)$, as well as $c_{i'}$ for $i'\neq i$. In particular, at least one pair $c_u$ and $ c_{u+1}$ or $c_v$ and $c_{v+1}$ will be forced. In both cases, we find two adjacent blue cycle vertices, as desired. Thus, $\Zone{G}=q$ in this case.

\textbf{Case 2:} $G$ does not contain two disjoint pairs of adjacent cycle vertices with degree at 3 or more and $G$ contains a vertex of degree $4$ or more.

\textbf{Case 2.1:} $G$ contains a pair of adjacent cycle vertices with degree 4 or more.

We will show that $S=Q$ is a $1$-leaky forcing set. Let $c_u$ and $c_{u+1}$ be a pair of adjacent cycle vertices such that $\deg(c_u)\geq 4$ and $\deg(c_{u+1})\geq 4$. Regardless of leak location, $c_u$ and $c_{u+1}$ will be forced by applying Lemma \ref{lem:unitree}. Since these vertices are adjacent blue cycle vertices, we see $\Zone{G}=q$, as desired.

\textbf{Case 2.2:} $G$ contains a cycle vertex of degree 4 or more which has two cycle neighbors with degree 3.

We will show that $S=Q$ is a $1$-leaky forcing set. Let $c_{u-1}$, $c_{u}$, and $c_{u+1}$ be a group of subsequent cycle vertices such that $\deg(c_u)\geq 4$ and $\deg(c_{u-1})=\deg(c_{u+1})=3$. Regardless of leak location, $c_u$ and at least one of $c_{u+1}$ or $c_{u-1}$ will be forced by applying Lemma \ref{lem:unitree}. Since we have found two adjacent blue cycle vertices, we see $\Zone{G}=q$, as desired.

\textbf{Case 2.3:} $G$ contains a cycle vertex of degree 4 or more has a cycle neighbor of degree $2$ and a cycle neighbor of degree $3$.

We will first show that $Q$ is not a $1$-leaky forcing set. Let $c_{u-1}$, $c_{u}$ and $c_{u+1}$ be a group of subsequent cycle vertices such that $\deg(c_u)\geq 4$, $\deg(c_{u-1})=3$, and $\deg(c_{u+1})=2$. Suppose the leak is on the vertex in $T_{c_{u-1},1}$ adjacent to $c_{u-1}$. Then, after applying Lemma \ref{lem:unitree}, $c_u$ will be forced but $c_{u-1}$ and $c_{u+1}$ will remain white. Thus, we cannot find a pair of adjacent blue cycle vertices inside the set $\{c_{u-1},c_{u},c_{u+1}\}$. Note that by assumption, $G$ cannot contain any pairs of adjacent cycle vertices with degree at least three outside of this set. Therefore every other pair of cycles vertices contains a vertex of degree 2, which will remain white. So we see $\Zone{G}> q$.

We now show that $S=Q \cup \{c_{u-1}\}$ is a $1$-leaky forcing set of $G$. Regardless of leak placement, $c_{u-1}$ and $c_u$ will be forced after applying Lemma \ref{lem:unitree}. Since we have found a pair of adjacent blue cycle vertices, we see $\Zone{G}=q+1$, as desired.

\textbf{Case 2.4:} Every cycle vertex of degree 4 or more has two cycle neighbors of degree $2$.

We will first show that $Q$ is not a $1$-leaky forcing set. The only way to obtain two adjacent blue cycle vertices after forcing in the trees is to have two adjacent cycle vertices with degree at least 3. In this case, such a pair of vertices must have degree exactly 3. By assumption, we do not have two disjoint pairs of degree 3 vertices. However, we could have one pair of adjacent vertices of degree 3 or one set of three subsequent vertices of degree 3. 

Let $c_{u-1}$, $c_{u}$ and $c_{u+1}$ be a group of subsequent cycle vertices such that $\deg(c_{u-1})\leq 3$ and $\deg(c_{u})=\deg(c_{u+1})=3$. Suppose the leak is on the vertex in $T_{c_{u},1}$ adjacent to $c_{u}$. 
 
 Assuming $\deg(c_{u-1})=3$, note that this implies $\deg(c_{u-2})=\deg(c_{u+2})=2$, since $G$ does not contain two disjoint pairs of adjacent cycle vertices with degree 3 or more. Then, after applying Lemma \ref{lem:unitree}, $c_{u-1}$ and $c_{u+1}$ will be forced but $c_{u-2}$, $c_{u}$, and $c_{u+2}$ will remain white. Thus, we cannot find a pair of adjacent blue cycle vertices inside the set $\{c_{u-2}, c_{u-1}, c_{u},c_{u+1},c_{u+2}\}$.

 If $\deg(c_{u-1})=2$, assume $\deg(c_{u+2})=2$ (or else we are in the case above). After applying Lemma \ref{lem:unitree}, $c_{u+1}$ will be forced but $c_{u-1}$, $c_{u}$, and $c_{u+2}$ will remain white. Thus, we cannot find a pair of adjacent blue cycle vertices inside the set $\{ c_{u-1}, c_{u},c_{u+1},c_{u+2}\}$.

 If $G$ does not contain any pairs of adjacent vertices of degree $3$, there will be no adjacent blue cycle vertices after applying Lemma \ref{lem:unitree}, regardless of the leak location. Therefore, in each of these cases, $\Zone{G}> q$.

Let $c_{v-1}$, $c_{v}$ and $c_{v+1}$ be a group of subsequent cycle vertices such that $\deg(c_v)\geq 4$ and $\deg(c_{v-1})=\deg(c_{v+1})=2$. We now show that $S=Q \cup \{c_{v-1}\}$ is a $1$-leaky forcing set of $G$. Regardless of leak placement, $c_{v-1}$ and $c_v$ will be blue after applying Lemma \ref{lem:unitree}. Since we have found a pair of adjacent blue cycle vertices, we see $\Zone{G}=q+1$, as desired.

\textbf{Case 3:} $G$ does not contain two disjoint pairs of adjacent cycle vertices with degree 3 or more and every vertex in $G$ has degree at most $3$.

\textbf{Case 3.1:} $G$ contains a set of three subsequent cycle vertices of degree $3$, $c_{u-1}, c_{u},$ and $c_{u+1}$.

We will first show that $Q$ is not a $1$-leaky forcing set. Note that $\deg(c_{u-2})=\deg(c_{u+2})=2$, since $G$ does not contain two disjoint pairs of adjacent cycle vertices with degree 3 or more.  Suppose the leak is on the vertex in $T_{c_{u},1}$ adjacent to $c_{u}$. Then, after applying Lemma \ref{lem:unitree}, $c_{u-1}$ and $c_{u+1}$ will be forced but $c_{u-2}$, $c_{u}$, and $c_{u+2}$ will remain white. Thus, we cannot find a pair of adjacent blue cycle vertices inside the set $\{c_{u-2}, c_{u-1}, c_{u},c_{u+1},c_{u+2}\}$. Note that by assumption, $G$ cannot contain any pairs of adjacent cycle vertices with degree at least three outside of this set. Therefore every other pair of cycles vertices contains a vertex of degree 2, which will remain white. So we see $\Zone{G}> q$.

We now show that $S=Q \cup \{c_{u}\}$ is a 1-leaky forcing set of $G$. Regardless of leak placement, at least one of $c_{u-1}$ and $c_{u+1}$ will be forced after applying Lemma \ref{lem:unitree}. Therefore, the set $\{c_{u-1},c_{u},c_{u+1}\}$ contains a pair of adjacent blue cycle vertices, and we see $\Zone{G}=q+1$, as desired.

\textbf{Case 3.2:} $G$ contains two vertices of degree $3$ with distance $2$ and their common neighbor has degree $2$.

We will first show that $Q$ is not a $1$-leaky forcing set. Let $c_{u-1}$ and $c_{u+1}$ be cycle vertices with distance $2$ such that $\deg(c_{u-1})=\deg(c_{u+1})=3$. Note that their common neighbor, $c_{u}$, must have degree $2$. Since $G$ does not contain two disjoint pairs of adjacent cycle vertices with degree 3 or more, at most one of $c_{u-2}$ and $c_{u+2}$ has degree $3$. If such a vertex of degree $3$ exists, suppose without loss of generality it is $c_{u-2}$. Suppose the leak is on the vertex in $T_{c_{u-1},1}$ adjacent to $c_{u-1}$. Then, after applying Lemma \ref{lem:unitree}, $c_{u+1}$ (and possibly $c_{u-2}$) will be forced but $c_{u-1}$, $c_{u}$, and $c_{u+2}$ will remain white. Thus, we cannot find a pair of adjacent blue cycle vertices inside the set $\{c_{u-2},c_{u-1}, c_{u},c_{u+1},c_{u+2}\}$. Note that outside of this set, $G$ cannot contain any pairs of adjacent cycle vertices with degree at least three. Therefore every other pair of cycles vertices contains a vertex of degree 2, which will remain white. So we see $\Zone{G}> q$.

We now show that $S=Q \cup \{c_{u}\}$ is a 1-leaky forcing set of $G$. Regardless of leak placement, at least one of $c_{u-1}$ and $c_{u+1}$ will be forced after applying Lemma \ref{lem:unitree}. Therefore, the set $\{c_{u-1},c_{u},c_{u+1}\}$ contains a pair of adjacent blue cycle vertices, and we see $\Zone{G}=q+1$, as desired.

\textbf{Case 3.3:} $G$ contains at least three cycle vertices of degree $3$ such that one pair of degree $3$ cycle vertices are adjacent and each remaining pair have distance at least three.

We will first show that $Q$ is not a $1$-leaky forcing set. Let $c_u$ and $c_{u+1}$ be the pair of adjacent cycle vertices such that $\deg(c_u)=\deg(c_{u+1})=3$. Suppose the leak is on the vertex in $T_{c_{u},1}$ adjacent to $c_{u}$. Then, after applying Lemma \ref{lem:unitree}, $c_{u+1}$ will be forced but $c_{u}$ will remain white. Furthermore, there will not be a pair of adjacent blue vertices anywhere else in the cycle since every other pair of degree $3$ cycle vertices has distance at least 3. Therefore, $\Zone{G}> q$.

Let $c_v$ be a third cycle vertex such that $\deg(c_v)=3$. We now show that $S=Q \cup \{c_{v+1}\}$ is a $1$-leaky forcing set of $G$. Regardless of leak location, after applying Lemma \ref{lem:unitree}, at least one pair $\{c_u,c_{u+1}\}$ or $\{c_v,c_{v+1}\}$ will be blue. Therefore, we have found a pair of adjacent blue cycle vertices, and we see $\Zone{G}=q+1$, as desired.

\textbf{Case 3.4:} $G$ contains exactly two cycle vertices of degree $3$ and they are adjacent.

We will first show that $Q$ is not a $1$-leaky forcing set. Let $c_u$ and $c_{u+1}$ be the pair of adjacent cycle vertices such that $\deg(c_u)=\deg(c_{u+1})=3$. Suppose the leak is on the vertex in $T_{c_{u},1}$ adjacent to $c_{u}$. Then, after applying Lemma \ref{lem:unitree}, $c_{u+1}$ will be forced but $c_{u}$ will remain white. Furthermore, there will not be a pair of adjacent blue vertices anywhere else in the cycle since every other cycle vertex has degree 2. Therefore, $\Zone{G}> q$.

To continue forcing, we must add at least one vertex to the forcing set. If a vertex in a tree $T_{c_i,1}$ for $i\in\{u,u+1\}$ is added to the forcing set, placing a leak on the vertex in $T_{c_{i},1}$ adjacent to $c_{i}$ still prevents $c_i$ from being forced. If a vertex on another tree $T_{c_i,1}$ for $i\not\in\{u,u+1\}$ is added, it may help force $c_i$, but $c_i$ will never be part of a pair of adjacent blue cycle vertices. Suppose instead that a cycle vertex were added to the forcing set and call this vertex $c_v$. If $c_v$ is not adjacent to $c_{u+1}$, placing the leak on the vertex in $T_{c_{u},1}$ adjacent to $c_{u}$ prevents $c_u$ from being forced. If $c_v$ is adjacent to $c_{u+1}$, placing the leak on the vertex in $T_{c_{u+1},1}$ adjacent to $c_{u+1}$ prevents $c_{u+1}$ from being forced. In all of these cases, we fail to find two adjacent blue cycle vertices, so $\Zone{G}> q+1$. 

To complete the proof, choose any two adjacent cycle vertices, $c_k$ and $c_{k+1}$, and add them to the zero forcing set. Regardless of their degree, these vertices are a pair of adjacent blue cycle vertices and they can force any remaining white cycle vertices. Therefore, $S=Q\cup\{c_k,c_{k+1}\}$ is a $1$-leaky forcing set and we see that $\Zone{G}= q+2$, as desired.

\textbf{Case 3.5:} Every pair of degree $3$ vertices in $G$ have distance at least three.

We will first show that $Q$ is not a $1$-leaky forcing set. Regardless of the location of the leak, no pair of adjacent blue cycle vertices exist after applying Lemma \ref{lem:unitree}. Therefore, $\Zone{G}> q$.

To continue forcing, we must add at least one vertex to the forcing set. If a vertex on a tree $T_{c_i,1}$ is added, it may help force $c_i$, but $c_i$ will never be part of a pair of adjacent blue cycle vertices. Suppose instead that a cycle vertex were added to the forcing set and call this vertex $c_v$. At most one neighbor of $c_v$ has degree $3$, since there are no degree $3$ vertices with distance two. Without loss of generality, if $c_v$ is adjacent to a degree 3 vertex $c_i$, suppose the leak is on the vertex in $T_{c_{i},1}$ adjacent to $c_{i}$. After applying Lemma \ref{lem:unitree}, $c_{i}$ will not be forced. Therefore, there is still no pair of adjacent blue cycle vertices after applying Lemma \ref{lem:unitree}. Thus, adding one additional vertex is not sufficient and $\Zone{G}> q+1$ in this case. 

To complete the proof, choose any two adjacent cycle vertices, $c_k$ and $c_{k+1}$. Consider the set $Q \cup \{c_k, c_{k+1}\}$. Regardless of their degree, the vertices $c_k$ and $c_{k+1}$ are a pair of adjacent blue cycle vertices and they can force any remaining white cycle vertices. Therefore, $S=Q\cup\{c_k,c_{k+1}\}$ is a $1$-leaky forcing set and we see that $\Zone{G}= q+2$, as desired.\end{proof}

Next, we prove our characterization for the $2$-leaky forcing number of unicyclic graphs. 
\begin{thm} \label{thm:uni2}
 Let $G$ be a unicyclic graph with girth $g$ and $R = \{v\in V(G) : \deg(v)\leq 2 \}$ such that $|R|=r$. Then 
\[\Ztwo{G} = \begin{cases}
     r+2 & \text{if $g=3,4$ and every cycle vertex has degree $3$,}\\
     r+1 & \text{if $g=3$ and two cycle vertices have degree $3$, or}\\
     &\text{if $g=4$ and one of the following applies:}\\
     &\text{\quad \quad $G$ contains two non-adjacent cycle vertices with degree $3$ or}\\ & \text{\quad \quad $G$ contains three cycle vertices of degree $3$,}\\
    r & \text{otherwise (including whenever $g \ge 5$).}
     \end{cases}
\]
\end{thm}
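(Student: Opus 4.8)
The plan is to work entirely through forts via Proposition \ref{prop:LeakyIFFforts}, reducing the whole theorem to a vertex-cover computation. By Lemma \ref{lem:lowdegree} every $2$-leaky forcing set contains $R$, so $\Ztwo{G} \ge r$ and every candidate set has the form $R \cup X$ with $X$ consisting of vertices of degree at least $3$. Write $W$ for the set of vertices of degree at least $3$. Since $R$ meets every $2$-leaky fort that contains a vertex of degree at most $2$, Proposition \ref{prop:LeakyIFFforts} shows that $R \cup X$ is a $2$-leaky forcing set if and only if $X$ meets every $2$-leaky fort $F \subseteq W$. Hence $\Ztwo{G} = r + \tau$, where $\tau$ is the minimum size of a set meeting all $2$-leaky forts contained in $W$.

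Next I would prove the structural heart: every $2$-leaky fort $F \subseteq W$ is a pair of degree-$3$ cycle vertices. For $F \subseteq W$, let $N(F)$ denote the number of vertices of $V \setminus F$ adjacent to exactly one vertex of $F$; by definition $F$ is a $2$-leaky fort exactly when $N(F) \le 2$. First I would rule out forts touching the trees using a deepest-vertex argument: if $F$ contains a tree vertex, choose one, $v$, at maximum distance from the cycle. Then $v$ has at least two children (it has degree $\ge 3$ and one edge toward the cycle), all of which lie outside $F$ and are adjacent to $v$ alone among $F$, and a short case check on the parent of $v$ produces a third such vertex, forcing $N(F) \ge 3$. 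Thus $F \subseteq C$. A cycle vertex $c_i \in F$ of degree $d$ then contributes its $d-2$ tree-roots to $N(F)$, each adjacent to $c_i$ alone; together with the contribution of the cycle neighbors this shows $N(F) \ge 3$ whenever $|F|=1$, whenever $F$ contains a vertex of degree at least $4$, or whenever $|F| \ge 3$. Therefore a $2$-leaky fort in $W$ must be a pair $\{c_i,c_j\}$ of degree-$3$ cycle vertices whose roots already account for $N(F) = 2$, so being a fort is equivalent to the requirement that no cycle vertex outside $F$ is adjacent to exactly one of $c_i, c_j$.

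I would then turn this adjacency requirement into the girth conditions. A routine check on $C_g$ shows the requirement holds precisely when $g = 3$ (where $c_i, c_j$ are adjacent and the third cycle vertex is a common neighbor) or $g = 4$ (where $c_i, c_j$ are antipodal and the two remaining cycle vertices are common neighbors), and that it fails for every pair when $g \ge 5$, since then some cycle neighbor of the pair sees exactly one of $c_i, c_j$. Forming the auxiliary graph $H$ on the degree-$3$ cycle vertices with these fort-pairs as edges, the quantity $\tau$ is exactly the minimum vertex cover of $H$. When $g \ge 5$, $H$ is edgeless and $\tau = 0$. When $g = 3$, $H$ is a triangle, a single edge, or edgeless according as three, exactly two, or at most one cycle vertex has degree $3$, giving $\tau = 2,1,0$. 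When $g = 4$, $H$ is a perfect matching on four vertices, a single edge, or edgeless according as all four, exactly one antipodal pair (arising from three degree-$3$ vertices or from two non-adjacent ones), or no antipodal pair of degree-$3$ vertices occurs, giving $\tau = 2,1,0$. Reading off $r + \tau$ in each situation reproduces the three cases in the statement.

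I expect the deepest-vertex argument of the second paragraph to be the main obstacle, since one must verify in every configuration of the parent of $v$ -- whether the parent is another fort vertex, a non-fort branch vertex, or the cycle vertex $c_i$ itself -- that a third ``exactly-one-neighbor'' vertex is exposed; the cases where the parent has small degree or lies in $F$ require the most care. Once forts are confined to pairs of degree-$3$ cycle vertices, the remaining geometric casework and the vertex-cover bookkeeping are routine. As a consistency check one can instead establish the upper bounds directly by colouring $R$ together with the chosen cover and applying Lemmas \ref{lem:unitree} and \ref{lem:forcetree} to finish the forcing once all cycle vertices are blue, which confirms the sets produced by the cover are genuinely $2$-leaky forcing sets.
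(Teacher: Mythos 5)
Your architecture is genuinely different from the paper's: you reduce $\Ztwo{G}$ to $r$ plus a minimum hitting set for the $2$-leaky forts lying inside the set $W$ of degree-$\ge 3$ vertices, characterize those forts completely, and read off the answer as a vertex cover, whereas the paper never classifies forts but instead constructs explicit forcing sets (via Lemmas \ref{lem:unitree} and \ref{lem:forcetree}) and exhibits leak placements for each lower bound. Your reduction $\Ztwo{G}=r+\tau$ is valid, your classification of the cycle-confined forts (only pairs of degree-$3$ cycle vertices, adjacent pairs for $g=3$, antipodal pairs for $g=4$, none for $g\ge 5$) is correct, and the vertex-cover bookkeeping does reproduce all three cases of the theorem. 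The gap is exactly where you predicted it, and it is worse than ``requires care'': the claim that ``a short case check on the parent of $v$ produces a third such vertex'' is false as stated. Concretely, let $c_i$ be a cycle vertex of degree $3$ whose unique tree child $p$ has a child $v$ of degree $3$, and suppose $F\supseteq\{v,c_i\}$ with $p\notin F$. Then $v$'s two children are counted, but $p$ is adjacent to \emph{two} vertices of $F$ (namely $v$ and $c_i$) and is not counted, the other children of $p$ and the grandchildren of $v$ have no neighbor in $F$ at all, and $c_i$ has no other tree child. No vertex within distance two of $v$ is a third exactly-one-neighbor vertex. The same failure occurs when the parent of $v$ is itself a cycle vertex of $F$ having $v$ as its only tree child. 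In these configurations the third counted vertex does exist, but it can be on the far side of the graph: either some other cycle vertex of $F$ has a tree child that is counted, or, if $c_i$ is the only cycle vertex in $F$, both cycle neighbors of $c_i$ are counted. Ruling out a fort therefore requires a global cascade argument, not a local check at the parent.

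The missing argument can be supplied, so the approach is salvageable, but it is the real content of your structural lemma. One workable route: take $v\in F$ to be a tree vertex of $F$ with no descendant in $F$; if two such vertices exist their children already give $N(F)\ge 4$, so there is exactly one, forcing all tree vertices of $F$ onto the path from $v$ to its cycle vertex $c_i$. Then show any internal vertex of that path lying in $F$ would have degree $2$ (contradicting $F\subseteq W$) or a counted off-path child; conclude the tree part of $F$ is $\{v\}$ with either $c_i\in F$ at distance one or two from $v$. Finally, argue around the cycle: every cycle vertex of $F$ other than $c_i$ lies in $W$ and hence has a counted tree child, while if $c_i$ is the only cycle vertex of $F$, its two cycle neighbors are each adjacent to exactly one vertex of $F$ and are counted. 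Every branch ends with $N(F)\ge 3$, which is what your Step 2 actually needs before the rest of your proof — which is fine as written — can proceed.
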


\begin{proof} 
By Lemma \ref{lem:lowdegree}, every vertex of degree at most two must be in every $2$-leaky forcing set $S$, i.e., $R\subseteq S$. Note that this includes all leaves, as well as vertices on the trees and cycle with degree 2. 

Observe that for a vertex $c_i$ on the cycle $C$, if $c_i$ has degree $3$, placing a leak on the vertex of $T_{c_i,1}$ which is adjacent to $c_i$ prevents $c_i$ from being forced except perhaps by a cycle neighbor. If $c_i$ has degree $4$, by Lemma \ref{lem:unitree}, one leak cannot prevent it from being forced by $T_{c_i,j}$ for some $j$. However, placing a leak on the vertices of $T_{c_i,1}$ and $T_{c_i,2}$ which are adjacent to $c_i$ prevents $c_i$ from being forced except perhaps by a cycle neighbor. If $c_i$ has degree $5$ or more, by Lemma \ref{lem:unitree}, placing two leaks on any $T_{c_i,j}$ cannot prevent $c_i$ from being forced.

Since $R\subseteq S$ and by Lemma \ref{lem:unitree}, if both leaks appear on any of the cycle vertices $c_i$, then all vertices of the graph will be forced, so we do not need to consider this case. 

If one of the leaks appears on a vertex of the tree $T_{c_i,j}$ and the other leak appears on the cycle vertex $c_{i'}$ (note this includes the case where $i=i'$), then applying Lemma \ref{lem:unitree} will force all $T_{c_k,m}$ for all pairs $(k,m)\not=(i,j)$. In this case, all vertices of the graph will be blue except for possibly some vertices in that particular $T_{c_i,j}\cup\{c_i\}$. If there are two adjacent blue cycle vertices at this point, then $c_i$ will be forced by one of its cycle neighbors (note that at most one such neighbor could be $c_{i'}$). At this point, the set of blue vertices is a $2$-leaky forcing set by Lemma \ref{lem:forcetree}. Therefore, the remaining vertices can be forced.

If the leaks appear on a vertex of the trees $T_{c_i,j}$ and $T_{c_{i'},j'}$ (note this includes the case where $i=i'$), then applying Lemma \ref{lem:unitree} will force all $T_{c_k,m}$ for all pairs $(k,m)\not\in\{(i,j),(i',j')\}$. In this case, all vertices of the graph will be blue except for possibly some vertices in those particular $T_{c_i,j}\cup T_{c_{i'},{j'}}\cup\{c_i\}\cup\{c_{i'}\}$. If there are two adjacent blue cycle vertices at this point, then $c_i$ and $c_{i'}$ will be forced by one of their cycle neighbors. At this point, the set of blue vertices is a $2$-leaky forcing set by Lemma \ref{lem:forcetree}. Therefore, the remaining vertices can be forced.

Thus, to demonstrate that a set $S$ is a $2$-leaky forcing set, it remains to show that when at least one leak is placed in the tree vertices, we can always find two adjacent blue cycle vertices after applying Lemma \ref{lem:unitree}. We now consider cases:

\textbf{Case 0:} Every cycle vertex has degree 2.

Observe $G\cong C_n$ and from Proposition \ref{prop:basicgraphresults}, $\Ztwo{G}=n=r$. 

For the remaining cases, we assume now that at least one cycle vertex has degree at least 3.

{\bf Case 1:} If $g=3$, let $c_u, c_v$ and $c_w$ be the cycle vertices.

\textbf{Case 1.1:} At most one cycle vertex has degree $3$.

We will show that $S=R$ is a $2$-leaky forcing set. Let $c_u$ be the cycle vertex which may have degree $3$. Note that $c_v$ and $c_w$ have degrees $2$, $4$, or at least $5$. If no cycle vertices have degree $4$, then regardless of the leak placement, $c_v$ and $c_w$ will either start in the forcing set or be forced when applying Lemma \ref{lem:unitree}. If at least one cycle vertex $c_i$, has degree $4$, placing the leaks on the vertices in $T_{c_i,1}$ and $T_{c_i,2}$ adjacent to $c_i$ prevents $c_i$ from being forced when applying Lemma \ref{lem:unitree}. However, the other two cycle vertices will either start in the forcing set or be forced. In any other leak placement $c_v$ and $c_w$ will either start in the forcing set or be forced after applying Lemma \ref{lem:unitree}. In each case, we find two adjacent blue cycle vertices, as desired. Thus, $\Ztwo{G}=r$.

\textbf{Case 1.2:} Exactly two cycle vertices, say $c_u$ and $c_v$, have degree $3$.

We will first show that $R$ is not a $2$-leaky forcing set for $G$. By placing leaks on the vertices in $T_{c_u,1}$ and $T_{c_v,1}$ adjacent to $c_u$ and $c_v$ respectively, both $c_u$ and $c_v$ will not be forced when applying Lemma \ref{lem:unitree}. Furthermore, $c_w$ has at least two white neighbors ($c_u$ and $c_v$), so it cannot force regardless of its degree. Therefore, $\Ztwo{G}> r$. 

We will now show that $S=R\cup\{c_u\}$ is a $2$-leaky forcing set. If $c_w$ has degree $5$ or more, after applying Lemma \ref{lem:unitree}, $c_w$ will be forced regardless of leak placement. If $c_w$ has degree $2$, it is already in the forcing set. In both of these cases, $c_w$ and $c_u$ are a pair of adjacent blue cycle vertices. If $c_w$ has degree $4$, then placing the leaks on $T_{c_w,1}$ and $T_{c_w,2}$ prevents $c_w$ from being forced when applying Lemma \ref{lem:unitree}. However, the other two cycle vertices will be forced, so $c_u$ and $c_v$ are a pair of adjacent blue cycle vertices. If the leaks are placed anywhere else, $c_w$ will be forced, so $c_w$ and $c_u$ are a pair of adjacent blue cycle vertices. Therefore, $\Ztwo{G}=r+1$.

\textbf{Case 1.3:} Every cycle vertex has degree $3$.

We will first show that $R$ is not a $2$-leaky forcing set for $G$. Placing the leaks on the vertices of $T_{c_u,1}$ and $T_{c_v,1}$ adjacent to $c_u$ and $c_v$ and applying Lemma \ref{lem:unitree} results in only one cycle vertex, $c_w$, being forced. Furthermore, $c_w$ has at least two white neighbors ($c_u$ and $c_v$), so it cannot force regardless of its degree. Therefore, $\Ztwo{G}> r$.

If a vertex of $T_{c_i,1}$ was added to the forcing set, then placing a leak on the vertex of $T_{c_i,1}$ adjacent to $c_i$ prevents $c_i$ from being forced when applying Lemma \ref{lem:unitree}. The other leak can be used to prevent one of the other cycle vertices from being forced. Suppose instead one of the cycle vertices, say $c_u$, was added to the forcing set. Then placing the leaks on the vertices of $T_{c_v,1}$ and $T_{c_w,1}$ adjacent to $c_v$ and $c_w$ prevents both of these vertices from being forced when applying Lemma \ref{lem:unitree}. In each of these cases, only one of the three cycle vertices will be forced. Therefore, no one additional vertex being added to the forcing set will be sufficient, so $\Ztwo{G}> r+1$.

The set $S=R\cup\{c_u,c_v\}$ is a $2$-leaky forcing set, since regardless of leak placement, we have found a pair of adjacent blue cycle vertices. Therefore, $\Ztwo{G}=r+2$.

\textbf{Case 2:} If $g=4$, let $c_u,c_v, c_w,$ and $c_z$ be the cycle vertices.

\textbf{Case 2.1:} $G$ contains at most one cycle vertex of degree $3$ or exactly two cycle vertices of degree $3$ and those vertices are adjacent.

We will show that $S=R$ is a $2$-leaky forcing set. In either case, there are two adjacent cycle vertices, say $c_u$ and $c_v$, which do not have degree $3$. If neither $c_u$ nor $c_v$ have degree $4$, then regardless of the leak placement, $c_u$ and $c_v$ will either start in the forcing set or be forced by one of their trees $T_{c_u,j}$ and $T_{c_v,k}$ when applying Lemma \ref{lem:unitree}. Therefore, we find two adjacent blue cycle vertices, as desired. If at least one of these vertices, say $c_u$, has degree $4$, placing the leaks on the vertices of $T_{c_u,1}$ and $T_{c_u,2}$ adjacent to $c_u$ prevents $c_u$ from being forced when applying Lemma \ref{lem:unitree}. However, in this case, the other three cycle vertices will be forced, so we find two adjacent blue cycle vertices in the set $\{c_v,c_w,c_z\}$, as desired. With any other leak configuration, $c_u$ and $c_v$ will be forced when applying Lemma \ref{lem:unitree}. In either case, we find two adjacent blue cycle vertices. Thus, $\Ztwo{G}=r$.

\textbf{Case 2.2:} $G$ contains exactly two cycle vertices of degree $3$ and these vertices are non-adjacent or $G$ contains exactly three cycle vertices of degree $3$. 

We will first show that $R$ is not a $2$-leaky forcing set. In either case, $G$ contains a pair of non-adjacent cycle vertices, say $c_u$ and $c_w$, with degree $3$. Placing the leaks on the vertices of $T_{c_u,1}$ and $T_{c_w,1}$ adjacent to $c_u$ and $c_w$ and applying Lemma \ref{lem:unitree} prevents $c_u$ and $c_w$ from being forced. Since $c_v$ and $c_z$ are each adjacent to at least two white vertices ($c_u$ and $c_w$), they cannot force, regardless of their degree. Therefore, $\Ztwo{G}> r$.

We will now show that $S=R\cup\{c_u\}$ is a $2$-leaky forcing set. Note that $c_u$ has a neighbor, say $c_v$, which does not have degree $3$. If $c_v$ does not have degree $4$, then it will either be in the forcing set or will be forced after applying Lemma \ref{lem:unitree}. In these cases, $c_u$ and $c_v$ are a pair of adjacent blue cycle vertices. If $c_v$ has degree $4$, then it can be prevented from being forced when applying Lemma \ref{lem:unitree} by placing the leaks on the vertices of $T_{c_v,1}$ and $T_{c_v,2}$ adjacent to $c_v$. However, in this case, all other cycle vertices will be forced. For any other leak placement, $c_v$ will be forced after applying Lemma \ref{lem:unitree}. In each case, we find two adjacent blue cycle vertices, so $\Ztwo{G}= r+1$.

\textbf{Case 2.3:} Every cycle vertex has degree $3$.

We will first show that $R$ is not a $2$-leaky forcing set. Let $c_u$ and $c_w$ be non-adjacent cycle vertices. Placing the leaks on the vertices of $T_{c_u,1}$ and $T_{c_w,1}$ adjacent to $c_u$ and $c_w$ and applying Lemma \ref{lem:unitree} prevents $c_u$ and $c_w$ from being forced. Since $c_v$ and $c_z$ are each adjacent to at least two white vertices ($c_u$ and $c_w$), they cannot force, regardless of their degree. Therefore, $\Ztwo{G}> r$.

If a vertex of $T_{c_i,1}$ was added to the forcing set, then placing a leak on the vertex of $T_{c_i,1}$ adjacent to $c_i$ prevents $c_i$ from being forced when applying Lemma \ref{lem:unitree}. The other leak can be used to prevent the vertex $c_j$ which is not adjacent to $c_i$ from being forced. Suppose instead one of the cycle vertices, say $c_v$, was added to the forcing set. Then placing the leaks on the vertices of $T_{c_u,1}$ and $T_{c_w,1}$ adjacent to $c_u$ and $c_w$ (where $c_u$ and $c_w$ are adjacent to $c_v$) prevents both of these vertices from being forced when applying Lemma \ref{lem:unitree}. In each of these cases, there will be two non-adjacent cycle vertices which are each adjacent to two white cycle vertices. Therefore, no one additional vertex being added to the forcing set will be sufficient, so $\Ztwo{G}> r+1$.
 
 Let $c_u$ and $c_v$ be adjacent cycle vertices. Then $S=R\cup\{c_u,c_v\}$ is a $2$-leaky forcing set, since regardless of leak placement, we have found a pair of adjacent blue cycle vertices. Therefore, $\Ztwo{G}=r+2$.

\textbf{Case 3:}  If $g\geq 5$.

We will show that $S=R$ is a $2$-leaky forcing set. The leaks can prevent at most two cycle vertices from being forced by their trees when applying  Lemma \ref{lem:unitree}. This means there must be at least two adjacent cycle vertices that are either in the zero forcing set or are forced by their trees. Therefore, $\Ztwo{G}= r$. \end{proof}

Finally, we finish our complete characterization of the leaky forcing number for unicyclic graphs by considering the $\ell\ge 3$ case.

\begin{thm} \label{thm:uni3+}
  Let $G$ be a unicylic graph. For $\ell\geq 3$, $$\Znum{G}{\ell}=|\{v\in V(G) : \deg(v)\leq \ell \}|.$$
\end{thm}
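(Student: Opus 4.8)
The plan is to prove the two inequalities separately, with essentially all of the work in the upper bound. Write $\mathcal{U}=\{v\in V(G):\deg(v)\le \ell\}$. The lower bound $\Znum{G}{\ell}\ge |\mathcal{U}|$ is immediate from Lemma~\ref{lem:lowdegree}, which puts $\mathcal{U}\subseteq S$ for every $\ell$-leaky forcing set $S$. For the upper bound I would show that $\mathcal{U}$ is itself an $\ell$-leaky forcing set by fixing an arbitrary placement of the $\ell$ leaks, coloring $\mathcal{U}$ blue, and running the process. The first move is to apply Lemma~\ref{lem:unitree}: since the leaves lie in $\mathcal{U}$, every tree $T_{c_i,j}$ containing no leak is forced, and each such tree also forces its attaching cycle vertex $c_i$.

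The crux is to show the cycle $C$ becomes entirely blue. Here I would record the key leak count: a cycle vertex $c_i$ can remain white after this initial round only if $c_i\notin\mathcal{U}$, i.e. $\deg(c_i)\ge \ell+1$, so that $c_i$ has at least $\ell-1$ attached trees; and by Lemma~\ref{lem:unitree} (together with monotonicity of forcing in the initial blue set) every one of those trees must then contain a leak. Because distinct cycle vertices have pairwise disjoint tree-sets, two white cycle vertices would consume at least $2(\ell-1)$ leaks, and $2(\ell-1)>\ell$ precisely when $\ell\ge 3$. Hence at most one cycle vertex remains white.

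If no cycle vertex is white, the cycle is already blue. Otherwise let $c_i$ be the unique white cycle vertex. Blocking $c_i$ uses at least $\ell-1$ leaks inside its own trees, so at most one leak lies elsewhere. A single stray leak can ``touch'' at most one of the cycle neighbors $c_{i-1},c_{i+1}$ (lying either on that neighbor or inside one of its trees), since these two vertices are distinct and have disjoint tree-sets. The untouched neighbor, say $c_{i+1}$, is then not a leak and has all of its trees leak-free, so by Lemma~\ref{lem:unitree} those trees are fully blue; as every cycle vertex other than $c_i$ is blue, $c_i$ is the unique white neighbor of $c_{i+1}$, and $c_{i+1}$ forces $c_i$. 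Thus the cycle is forced in every case. (Equivalently, directly blocking a single white $c_i$ together with both neighbors would require at least $(\ell-1)+1+1=\ell+1>\ell$ leaks, which is the real reason one leaked cycle vertex can never stall.)

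Finally, once all cycle vertices are blue, the only remaining white vertices lie in the at most $\ell$ trees that received a leak; letting $I$ index these trees gives $|I|\le \ell$, and the current blue set contains $\mathcal{U}$, so Lemma~\ref{lem:forcetree} shows the process finishes for this leak placement. As the placement was arbitrary, $\mathcal{U}$ is an $\ell$-leaky forcing set and $\Znum{G}{\ell}\le|\mathcal{U}|$. I expect the main obstacle to be the bookkeeping in the single-white-vertex step: one must verify that the ``untouched'' neighbor is genuinely blue, non-leak, and has all of its trees completely forced before concluding it forces $c_i$, and that the stray-leak count is handled uniformly across girths (in particular for $g=3$, where $c_{i-1}$ and $c_{i+1}$ are themselves adjacent).
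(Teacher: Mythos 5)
Your proof is correct and follows essentially the same route as the paper's: the lower bound from Lemma~\ref{lem:lowdegree}, then Lemma~\ref{lem:unitree} to force all leak-free trees and their attaching cycle vertices, a leak-counting argument to force the cycle, and Lemma~\ref{lem:forcetree} to finish inside the at most $\ell$ leaked trees. The paper packages the cycle step as a case analysis on cycle vertices of degree $\ell+1$, $\ell+2$, and $\ell+3$ or more, whereas you run the identical count ($2(\ell-1)>\ell$ for $\ell\ge 3$, so at most one cycle vertex can be starved, leaving at most one stray leak that cannot block both of its cycle neighbors) uniformly over an arbitrary leak placement---a slightly cleaner write-up of the same idea.
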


\begin{proof}
By Lemma \ref{lem:lowdegree}, every vertex of degree at most $\ell$ must be in every $\ell$-leaky forcing set $S$. Note that this includes all leaves, as well as vertices on the trees and cycle with degree at most $\ell$.

Observe that for a vertex $c_i$ on the cycle $C$, if $c_i$ has degree $\ell+1$, placing the leaks on the vertices of $T_{c_i,j}$ which are adjacent to $c_i$ prevents $c_i$ from being forced, except perhaps by a cycle neighbor. If $c_i$ has degree $\ell+2$, by Lemma \ref{lem:unitree}, $\ell-1$ leaks cannot prevent it from being forced by $T_{c_i,j}$ for some $j$. However, placing $\ell$ leaks on the vertices of $T_{c_i,j}$ which are adjacent to $c_i$ prevents $c_i$ from being forced, except perhaps by a cycle neighbor. If $c_i$ has degree $\ell+3$ or more, by Lemma \ref{lem:unitree}, placing leaks on any $T_{c_i,j}$ cannot prevent $c_i$ from being forced. 

We will now show $S= \{v\in V(G) : \deg(v)\leq \ell\} $ is an $\ell$-leaky forcing set. If $G$ contains a cycle vertex $c_i$ with degree $\ell+2$, observe that placing all $\ell$ leaks on the vertices of $T_{c_i,j}$ which are adjacent to $c_i$ prevents $c_i$ from being forced by its trees. However, applying Lemma \ref{lem:unitree}, every other cycle vertex $c_j$ will be forced by any of its trees (or $c_j\in S$). Because no leaks remain, $c_i$ can then be forced by one of its cycle neighbors. Then, applying Lemma \ref{lem:forcetree} results in the remaining vertices being forced. 

If $G$ contains a cycle vertex $c_i$ with degree $\ell+1$, observe that placing $\ell-1$ leaks on the vertices of $T_{c_i,j}$ which are adjacent to $c_i$ prevents $c_i$ from being forced by its trees. However, applying Lemma \ref{lem:unitree}, every other cycle vertex $c_j$ will be forced by its tree (or $c_j\in S$). Because only one leak remains, $c_i$ can then be forced by one of its cycle neighbors. Then, applying Lemma \ref{lem:forcetree} results in the remaining vertices being forced.

In both of these cases, any other leak placement will not prevent the cycle vertices (which are not already in $S$) from being forced when applying Lemma \ref{lem:unitree}. Similarly, if $G$ does not contain any cycle vertices with degree $\ell+1$ or $\ell+2$, all cycle vertices will either be in $S$ or will be forced by their trees when applying Lemma \ref{lem:unitree}, regardless of leak placement. Then, applying Lemma \ref{lem:forcetree} results in the remaining vertices being forced.

Therefore, $S=\{v\in V(G) : \deg(v)\leq \ell \}$ is an $\ell$-leaky forcing set, as desired.
\end{proof}

\section{Generalized Petersen Graphs} \label{sec:gp}

In this section, we determine the exact $\ell$-leaky forcing number for select generalized Peterson graphs when $\ell=1,2$ and all generalized Peterson graphs for $\ell\geq 3$. We also provide upper bounds for the $\ell$-leaky forcing number of generalized Petersen graphs for $\ell=1,2$.

A {\it generalized Petersen graph}, $P(n,k)$ has $V=\{x_1, x_2, \ldots, x_n, y_1, y_2, \ldots, y_n \}$ and 
\[E= \{y_iy_{i+1} : 1\leq i \leq n\} \cup \{x_iy_i : 1 \leq i \leq n\} \cup \{x_i x_{i+k} : 1 \leq i \leq n \}\]
where $n\ge 3$ and $1\leq k\leq \frac{n-1}{2}$. 
The upper bound for $k$ is consistent with previous studies on $P(n,k)$ (for example, see \cite{GPG20}) and has two considerations. First, $k = \frac{n}{2}$ when $n$ is even is disallowed, as then, the inner vertices would have degree 2 (instead of degree 3). Also, if $k > \frac{n}{2}$, we would have $P(n,n-k) \cong P(n,k)$; hence, the range of $k$ is restricted for simplicity. For this definition and throughout this section, the indices are taken modulo $n$; that is, $y_ny_1$, is an edge. The $x_i$ will be referred to as \textit{inner vertices} and the $y_i$ as \textit{outer vertices}. The classical Petersen graph is $P(5,2)$. Figure \ref{fig:genpet7,2} highlights examples of generalized Peterson graphs. 

\begin{figure}[h!]
\begin{center}
\begin{tabular}{ccc}

\begin{tikzpicture}[every node/.style={draw,circle,very thick}]
  \graph[clockwise, radius=2cm] {subgraph C_n [n=5,name=A] }; 
  \graph[clockwise, radius=1cm] {subgraph I_n [n=5,name=B] }; 
  \foreach \i in {1,2,...,5}{
    \draw (A \i) -- (B \i);
  }
  \foreach \i [evaluate={\j=int(mod(\i+2 -1,5)+1)}] 
     in {1,2,...,5}{
      \draw (B \j) -- (B \i);
    }
\end{tikzpicture}

&

\begin{tikzpicture}[every node/.style={draw,circle,very thick}]
  \graph[clockwise, radius=2cm] {subgraph C_n [n=7,name=A] }; 
  \graph[clockwise, radius=1cm] {subgraph I_n [n=7,name=B] }; 
  \foreach \i in {1,2,...,7}{
    \draw (A \i) -- (B \i);
  }
  \foreach \i [evaluate={\j=int(mod(\i+3 -1,7)+1)}] 
     in {1,2,...,7}{
      \draw (B \j) -- (B \i);
    }
\end{tikzpicture}

&

\begin{tikzpicture}[every node/.style={draw,circle,very thick}]
  \graph[clockwise, radius=2cm] {subgraph C_n [n=9,name=A] }; 
  \graph[clockwise, radius=1cm] {subgraph I_n [n=9,name=B] }; 
  \foreach \i in {1,2,...,9}{
    \draw (A \i) -- (B \i);
  }
  \foreach \i [evaluate={\j=int(mod(\i+4 -1,9)+1)}] 
     in {1,2,...,9}{
      \draw (B \j) -- (B \i);
    }
\end{tikzpicture}

\end{tabular}
\end{center}
\caption{Examples of generalized Petersen graphs: $P(5,2)$ (left), $P(7,3)$ (center), and $P(9,4)$ (right).}
\label{fig:genpet7,2}
\end{figure}
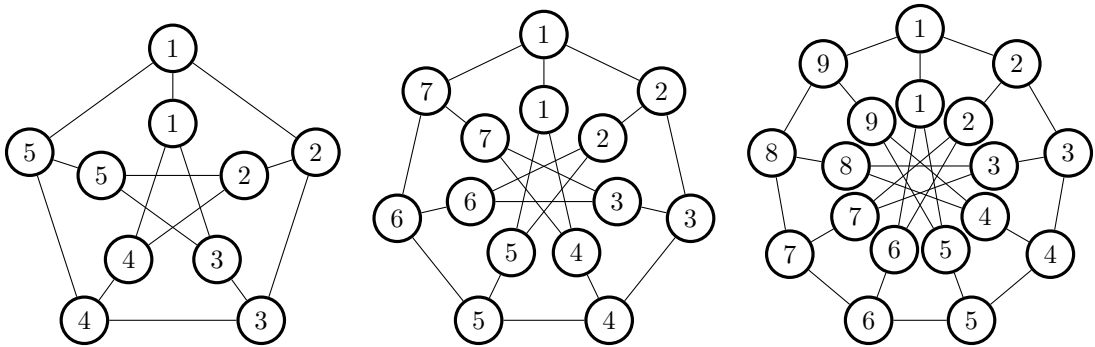

In this section, we will substantially build upon previous results for zero forcing and leaky forcing for generalized Petersen graphs. In \cite{rashidi2020computing}, the authors studied the zero forcing number of generalized Petersen graphs and we will make use of the following results from their paper.

\begin{thm}[\cite{rashidi2020computing}, Theorem 2.2] \label{thm:zg_gpg_2k+2} For any $n \ge 3$ and any $k \ge 1$, $\Zf{P(n,k)} \le 2k+2$.
\end{thm}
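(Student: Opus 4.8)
The plan is to prove this upper bound by exhibiting an explicit zero forcing set of size $2k+2$ and describing a sequence of forces that eventually colors all of $P(n,k)$ blue. I propose the set $S = \{x_1, x_2, \ldots, x_{2k}\} \cup \{y_1, y_2\}$. This is well-defined with exactly $2k+2$ elements, since $2k \le n-1 < n$ (from $k \le \frac{n-1}{2}$), so the listed inner vertices are distinct from one another and from the two outer vertices. The overall strategy is to maintain and grow a contiguous \emph{blue prefix} $\{x_1, \ldots, x_m\} \cup \{y_1, \ldots, y_m\}$, increasing $m$ one step at a time up to $n$; because we grow the prefix monotonically from small indices to $n$, no modular wraparound arithmetic is ever needed.

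First I would run the outer chain. Since $y_1$ and $x_2$ are blue, the only white neighbor of $y_2$ is $y_3$, so $y_2$ forces $y_3$; more generally, as long as $x_i$ is blue (which holds for $1 \le i \le 2k$) and $y_{i-1}$ is blue, the vertex $y_i$ has unique white neighbor $y_{i+1}$ and forces it. This chain successively colors $y_3, y_4, \ldots, y_{2k+1}$, so after this step the outer vertices $y_1, \ldots, y_{2k+1}$ together with the seeded inner vertices $x_1, \ldots, x_{2k}$ are all blue.

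Next comes the key bootstrapping step: forcing the first new inner vertex $x_{2k+1}$. The inner neighbors of $x_{k+1}$ are $x_{(k+1)+k} = x_{2k+1}$ and $x_{(k+1)-k} = x_1$, and its spoke neighbor is $y_{k+1}$. Since $x_1$ is seeded and $y_{k+1}$ was colored in the previous step (note $k+1 \le 2k+1$), the only white neighbor of $x_{k+1}$ is $x_{2k+1}$, so $x_{k+1}$ forces $x_{2k+1}$. We now have a full blue prefix $\{x_1, \ldots, x_{2k+1}\} \cup \{y_1, \ldots, y_{2k+1}\}$ of length $m = 2k+1$.

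Finally I would iterate a prefix-extension step: given a full blue prefix of length $m \ge 2k+1$, the vertex $y_m$ forces $y_{m+1}$ (its other neighbors $y_{m-1}$ and $x_m$ are blue), and the vertex $x_{m+1-k}$ forces $x_{m+1}$ because its remaining two neighbors $x_{m+1-2k}$ and $y_{m+1-k}$ have indices in $\{1, \ldots, m\}$ and are therefore already blue. Each iteration increases $m$ by one, and since every index invoked lies in $[1,n]$, repeating this until $m = n$ colors every vertex blue; hence $S$ is a zero forcing set and $\Zf{P(n,k)} \le 2k+2$. I expect the only delicate point to be this inner-forcing bookkeeping: at each stage one must confirm that the forcing vertex $x_{m+1-k}$ has a \emph{unique} white neighbor, which reduces to checking that the two look-back indices $m+1-k$ and $m+1-2k$ remain inside the already-blue prefix — this is guaranteed precisely by the threshold $m \ge 2k+1$. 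I would verify the boundary case $k=1$ and the very first extension iteration directly to confirm the thresholds line up.
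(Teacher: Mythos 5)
Your proof is correct: every force you list is legal, and the index bookkeeping (the outer chain, the bootstrap force $x_{k+1}\to x_{2k+1}$, the prefix-extension step with threshold $m\ge 2k+1$, and the degenerate case $n=2k+1$ where no extension iterations are needed) all checks out. It is also, in substance, the same argument the paper itself gives. Although the theorem is quoted from \cite{rashidi2020computing}, whose proof uses the all-outer set $\{y_1,\ldots,y_{2k+2}\}$ (so your construction is genuinely different from the cited source's), the paper re-derives this bound in Lemma \ref{lem:gponezfs} from the set $\{x_1,\ldots,x_{2k},y_k,y_{k+1}\}$ --- the same $2k$ inner seeds plus an adjacent outer pair --- and its induction step is exactly yours: $y_m$ forces $y_{m+1}$ and $x_{m+1-k}$ forces $x_{m+1}$. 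The one real difference is where the outer pair sits. The paper centers it at $y_k,y_{k+1}$ so that the configuration is symmetric and the whole argument runs equally well clockwise or counterclockwise; that two-directional forcing is precisely what the paper invokes (via Lemma \ref{lem:twodifferentleaky}) to promote the very same set to a $1$-leaky forcing set in Theorem \ref{thm:gpone}. Your placement at $y_1,y_2$ is perfectly adequate for the zero forcing bound asked for here, but as written your scheme is one-directional --- in particular the bootstrap force $x_{k+1}\to x_{2k+1}$ is the unique entry point into the white inner vertices --- so it would not transfer to the leaky setting without additional work. In short: correct, same mechanism as the paper's construction, with a different (and, for the paper's larger purposes, less convenient) choice of the two outer seeds.
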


\begin{thm}[\cite{rashidi2020computing}, Theorems 2.5 and 3.6]\label{thm:ZF k=2}\label{thm:ZF k=3}
For $n\ge 10$, $\Zf{P(n,2)} = 6$ and for $n\ge 12$, $\Zf{P(n,3)} = 8.$
\end{thm}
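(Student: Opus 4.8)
The plan is to pin each value between matching bounds. The upper bounds $\Zf{P(n,2)}\le 6$ and $\Zf{P(n,3)}\le 8$ are immediate from Theorem \ref{thm:zg_gpg_2k+2} with $k=2,3$, so essentially all the work is in the lower bounds $\Zf{P(n,2)}\ge 6$ for $n\ge 10$ and $\Zf{P(n,3)}\ge 8$ for $n\ge 12$. If one prefers a self-contained upper bound instead of citing Theorem \ref{thm:zg_gpg_2k+2}, I would exhibit an explicit set of the right size---for $k=2$, a natural candidate is $\{y_1,y_2,x_1,x_2,x_3,x_4\}$---and verify it forces everything by an induction that ``marches'' the blue frontier once around the cycle, each step forcing one new outer vertex and one new inner vertex using the single-white-neighbor rule.

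For the lower bound I would exploit the band structure of $P(n,k)$. Group the vertices into columns $V_i=\{x_i,y_i\}$ arranged cyclically, and for a contiguous arc of positions $A=\{i+1,\dots,i+m\}$ observe that the edge cut separating $\bigcup_{a\in A}V_a$ from the remaining columns consists of exactly $2k+2$ edges: the two outer-cycle edges $y_iy_{i+1}$ and $y_{i+m}y_{i+m+1}$ at the ends of the arc, together with, at each end, the $k$ inner edges $x_jx_{j+k}$ that straddle the boundary. This count is exactly $2+2k$ provided the arc is neither too short nor too long, i.e. $k\le m\le n-k$. The hypotheses $n\ge 10$ and $n\ge 12$ enter here to rule out the small exceptional graphs (such as the classical Petersen graph $P(5,2)$) where the band degenerates and $\Zf{}$ is strictly smaller; for $n$ in the stated ranges the band is uniform and every such cut genuinely has width $2k+2$.

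The key claim is a frontier invariant forcing $|S|\ge 2k+2$. Concretely, I would take a minimum zero forcing set $S$ and consider its forcing chains, which are $|S|$ vertex-disjoint paths covering $V(P(n,k))$. Choosing a cut as above and restricting attention to the chains that cross it, each force that advances the frontier across the cut consumes a blue vertex exactly one of whose neighbors lies on the far side; the degree-$3$ structure together with the cut width $2k+2$ then bounds how many chains can be simultaneously active at the frontier. The goal is to show that fewer than $2k+2$ crossing chains cannot cover the $2k+2$ cut edges consistently with the color-change rule, a contradiction. An alternative, fort-based route via Proposition \ref{prop:LeakyIFFforts} is to produce $2k+2$ pairwise disjoint forts and invoke the fact that a zero forcing set must meet each; the difficulty in that route is exhibiting the disjoint forts explicitly inside the inner ``distance-$k$'' cycle.

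The main obstacle is precisely this lower bound. Unlike the localized set used for the upper bound, an arbitrary zero forcing set may be spread around the cycle, so the blue frontier need not be a clean contiguous arc, and one must argue the cut inequality holds simultaneously at every cyclic position and is tight only when $|S|\ge 2k+2$. Handling the wrap-around---and cleanly separating the generic large-$n$ behavior from the small exceptional cases that motivate the thresholds $n\ge 10$ and $n\ge 12$---is the delicate part. The $k=3$ case is strictly harder than $k=2$, since each end of the cut now contributes three straddling inner edges rather than two, enlarging the case analysis of which chains can occupy the frontier.
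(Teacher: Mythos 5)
First, a point of comparison: the paper does not prove this statement at all --- it is imported verbatim from \cite{rashidi2020computing} (Theorems 2.5 and 3.6), so there is no internal proof to match, and your attempt must stand on its own as a complete argument. It does not. The upper bounds are fine (Theorem \ref{thm:zg_gpg_2k+2}, or your explicit set with a frontier induction as in Lemma \ref{lem:gponezfs}). The problem is the lower bound, which you yourself identify as ``essentially all the work.'' What you offer there is a plan, not a proof: the sentence ``the goal is to show that fewer than $2k+2$ crossing chains cannot cover the $2k+2$ cut edges consistently with the color-change rule'' is precisely the assertion that needs proving, and it is never established; you then concede that handling wrap-around and non-contiguous blue regions is ``the delicate part.'' A proof sketch whose hard step is labeled as a goal is a gap, not a proof.

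Moreover, the underlying inference --- from the existence of arc cuts of width $2k+2$ to $\Zf{P(n,k)} \ge 2k+2$ --- is not valid in general. The only cut-type quantity that lower bounds zero forcing is the one passing through minimum degree ($\Zf{G} \ge \delta(G)$, hence $\Zf{G}$ is at least the edge connectivity), and for $P(n,k)$ this gives only $\Zf{P(n,k)} \ge 3$, since the minimum cut isolates a single vertex. Large non-minimum cuts impose no such bound: in $K_{3,3}$ the cut separating the two parts has $9$ edges, yet $\Zf{K_{3,3}} = 4$. Your band cuts of width $2k+2$ are not minimum cuts, so they cannot do the work by themselves. The chain-counting step also rests on a false premise: forcing chains are vertex-disjoint paths, but nothing forces them to ``cover'' the cut edges --- a chain may cross a given cut several times or not at all, so the number of chains crossing a cut is not bounded below by the cut width. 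The fort-based alternative via Proposition \ref{prop:LeakyIFFforts} has the same status: you would need to exhibit $6$ (resp.\ $8$) pairwise disjoint forts, and you acknowledge you cannot. As it stands, the proposal establishes $\Zf{P(n,2)} \le 6$ and $\Zf{P(n,3)} \le 8$ and nothing more; the equalities remain unproven.
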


In \cite{herrman2022d}, Herrman proved the following partial characterization of the $\ell$-leaky forcing for generalized Petersen graphs for the $k=1$ case.

\begin{thm} [\cite{herrman2022d}, Theorem 1.2]
\[\Zl{P(n,1)}= \begin{cases}
3 & \text{ if } \ell=0,1 \text{ and } n=3,\\
4 & \text{ if } \ell=0,1 \text{ and } n\geq 4 \text{ or } \ell=2 \text{ and } n=3,\\
2n & \text{ if } \ell\geq 3.
\end{cases}\]
\end{thm}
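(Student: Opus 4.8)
The plan is to treat the three regimes of $\ell$ separately, exploiting the structure of $P(n,1)$: it is the prism (circular ladder) $C_n\,\square\,K_2$, in which the outer vertices $y_1,\dots,y_n$ form a cycle, the inner vertices $x_1,\dots,x_n$ form a cycle, and the spokes $x_iy_i$ join the two, so that \emph{every} vertex has degree $3$. The case $\ell\ge 3$ is immediate: since every vertex has degree $3\le\ell$, Lemma~\ref{lem:lowdegree} forces every vertex into any $\ell$-leaky forcing set, and the full vertex set is trivially a forcing set, so $\Zl{P(n,1)}=2n$.

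For the upper bounds when $\ell\in\{0,1\}$ and $n\ge 4$, Theorem~\ref{thm:zg_gpg_2k+2} with $k=1$ gives $\Zf{P(n,1)}\le 4$. I would then exhibit the set $S=\{x_1,y_1,x_2,y_2\}$ (two consecutive rungs) and check it is not merely a zero forcing set but a $1$-leaky forcing set: the standard process propagates along both rails with increasing index, while the symmetric process in which $x_1$ forces $x_n$ and $y_1$ forces $y_n$ propagates with decreasing index. Hence each white $x_j$ can be forced by either $x_{j-1}$ or $x_{j+1}$, and each $y_j$ by either $y_{j-1}$ or $y_{j+1}$, so by Lemma~\ref{lem:twodifferentleaky} $S$ is $1$-leaky and $\Zone{P(n,1)}\le 4$. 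For $n=3$ (the triangular prism) I would instead take the inner triangle $S=\{x_1,x_2,x_3\}$: each $y_i$ is forced through its spoke, and each white $y_i$ has the two distinct forcers $x_i$ and $y_{i\pm1}$, so $S$ is a $1$-leaky forcing set of size $3$.

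The lower bounds I would obtain from $3$-regularity. If $|S|=2$, no blue vertex has two blue neighbors, so no force can ever occur; thus $\Zf{G}\ge 3$ for all $n$, matching the $n=3$ value. For $n\ge 4$ I would prove $\Zf{G}\ge 4$ by contradiction: a set with $|S|=3$ admits a first force only if some $v\in S$ has its two other neighbors in $S$, forcing $S=\{v,a,b\}$ with $a,b\in N(v)$; examining the three possible shapes of such an $S$ around a single rung shows that after $v$'s one force the configuration always stalls once $n\ge 4$, since the cycle wraparound that rescues $n=3$ is no longer present. Combined with the upper bound this gives $\Zf{P(n,1)}=4$, and Lemma~\ref{lem:moreleaksinq} together with the $1$-leaky set above yields $\Zone{P(n,1)}=4$. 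Finally, for $\ell=2,\ n=3$ I would use forts: each pair of adjacent inner vertices $\{x_i,x_{i+1}\}$ and each pair of adjacent outer vertices $\{y_i,y_{i+1}\}$ is a $2$-leaky fort, because exactly two vertices outside the pair (its two spoke-partners) are adjacent to exactly one vertex of the pair. By Proposition~\ref{prop:LeakyIFFforts} a $2$-leaky forcing set must meet all three inner-edge forts and all three outer-edge forts, and meeting the three edges of a triangle requires at least two of its vertices; hence $\Ztwo{P(3,1)}\ge 2+2=4$. For the matching upper bound I would verify directly that $S=\{x_1,x_2,y_1,y_2\}$ defeats every placement of two leaks, using that the two remaining white vertices $x_3,y_3$ are adjacent and each has three potential forcers, so two leaks can never block both.

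I expect the main obstacle to be the $n\ge 4$ zero-forcing lower bound: one must argue carefully that \emph{every} $3$-element candidate, after its forced single step, reaches a configuration in which no vertex has a unique white neighbor, and confirm that this case analysis survives the wraparound for the smallest values $n=4,5$. A secondary subtlety is the $1$-leaky verification by two opposite propagation directions, which requires checking that the reverse process is a legitimate forcing order, interleaving the two rails so that each vertex genuinely has a single white neighbor at the moment it forces.
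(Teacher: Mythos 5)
This statement is not proved in the paper at all: it is imported verbatim from Herrman (Theorem 1.2 of \cite{herrman2022d}), so there is no internal proof to compare against. Your blind proof is, as far as I can check, correct, and it is worth noting how it relates to what the paper does elsewhere. Your $\ell\ge 3$ argument (every vertex has degree $3$, so Lemma \ref{lem:lowdegree} forces all $2n$ vertices into the set) is exactly the observation the paper itself makes immediately after the quoted theorem to generalize it to all $k$. Your upper-bound construction for $n\ge 4$, the two consecutive rungs $\{x_1,x_2,y_1,y_2\}$ certified via bidirectional propagation and Lemma \ref{lem:twodifferentleaky}, is precisely the paper's Lemma \ref{lem:gponezfs} and Theorem \ref{thm:gpone} specialized to $k=1$, so that part is fully consistent with the paper's machinery. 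The pieces the paper nowhere supplies--the zero-forcing lower bound $\Zf{P(n,1)}\ge 4$ for $n\ge 4$, the $n=3$ values, and the $\ell=2$, $n=3$ case--are handled by sound elementary arguments: I verified that in the $3$-regular prism a $3$-element set admits a first force only if it consists of a vertex plus two of its neighbors, that both resulting shapes (three consecutive vertices on one cycle, or a cycle-edge plus a spoke) stall after the single forced move whenever $n\ge 4$, that each pair $\{x_i,x_{i+1}\}$ and $\{y_i,y_{i+1}\}$ is indeed a $2$-leaky fort of $P(3,1)$ under the paper's fort definition (only the two spoke-partners see exactly one fort vertex, since the third cycle vertex sees both), and that the triangle vertex-cover argument plus the direct check of $\{x_1,x_2,y_1,y_2\}$ against all two-leak placements gives $\Ztwo{P(3,1)}=4$. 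The only portion left as a sketch is the stall analysis you yourself flag as the main obstacle, but the case split is small and each case closes as you predict, so I regard the proposal as complete in outline and correct.
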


Note that since the minimum degree of $P(n,k)$ is always 3, applying Lemma \ref{lem:moreleaksinq} allows the following immediate generalization of Herrman's result for $\ell\geq 3$ to all $k$.

\begin{proposition}
For all $\ell\ge 3$ and $k\geq 1$, $\Zl{P(n,k)} = 2n$.
\end{proposition}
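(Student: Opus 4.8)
The plan is to reduce to the extremal case $\ell = 3$ and then invoke monotonicity. First I would verify that $P(n,k)$ is $3$-regular. By the definition of the edge set, each outer vertex $y_i$ is adjacent exactly to $y_{i-1}$, $y_{i+1}$, and $x_i$, while each inner vertex $x_i$ is adjacent exactly to $x_{i-k}$, $x_{i+k}$, and $y_i$. The restriction $1 \le k \le \frac{n-1}{2}$ in the definition guarantees that $i-k$, $i+k$, and $i$ are distinct modulo $n$, so no inner vertex collapses to degree $2$; hence every vertex of $P(n,k)$ has degree exactly $3$.

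Next I would apply Lemma \ref{lem:lowdegree} with $\ell = 3$. Since every vertex has degree at most $3$, every $3$-leaky forcing set must contain all $2n$ vertices, which gives $\Znum{P(n,k)}{3} \ge 2n$. On the other hand, the full vertex set is trivially an $\ell$-leaky forcing set for any $\ell$, because when all vertices are already blue there is nothing left to force regardless of the leak placement; thus $\Zl{P(n,k)} \le 2n$ for every $\ell$. Combining these two bounds in the case $\ell = 3$ yields $\Znum{P(n,k)}{3} = 2n$.

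Finally, for any $\ell \ge 3$ the monotonicity chain of Lemma \ref{lem:moreleaksinq} gives $2n = \Znum{P(n,k)}{3} \le \Zl{P(n,k)} \le 2n$, which forces equality and completes the argument. I do not anticipate a genuine obstacle here; the proof is essentially immediate once regularity is in hand. The only point that warrants care is confirming that $P(n,k)$ is exactly $3$-regular rather than harboring a vertex of degree $2$, which is precisely what the bound $k \le \frac{n-1}{2}$ in the definition of $P(n,k)$ rules out.
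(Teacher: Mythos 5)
Your proof is correct and follows essentially the same route as the paper: the paper's (implicit) argument likewise rests on the fact that $P(n,k)$ is $3$-regular, so that Lemma \ref{lem:lowdegree} forces every vertex into any $3$-leaky forcing set, with Lemma \ref{lem:moreleaksinq} handling all $\ell \ge 3$. Your write-up is simply a more explicit version, including the worthwhile check that $k \le \frac{n-1}{2}$ rules out inner vertices of degree $2$.
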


Since $\Zf{P(n,2)}$ and $\Zf{(P(n,3)}$ are known for $n\ge 10$ and $n\ge 12$ respectively (Theorem \ref{thm:ZF k=3}), we begin by computing several small cases when $\ell = 0,1,2$. These computational results can be verified by hand and were verified by computer using the algorithm which can be found in \cite{moruzziLeakyForcing} that utilizes Proposition \ref{prop:LeakyIFFforts}.

\begin{proposition} \label{prop:basicgpresultsk2}
For $\ell=0,1,2$, the values of $\Zl{P(n,2)}$ for $5\leq n\leq 9$ and the values of $\Zl{P(n,3)}$ for $7\leq n\leq 11$ are those stated in Table \ref{Tab:LowK2K3}.
\end{proposition}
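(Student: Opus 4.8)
The plan is to recast the whole statement as a finite collection of minimum-hitting-set problems and to dispatch them via the fort criterion of Proposition \ref{prop:LeakyIFFforts}. There are ten graphs in play, namely $P(n,2)$ for $5 \le n \le 9$ and $P(n,3)$ for $7 \le n \le 11$, and for each we must pin down three numbers, $\Zf{\cdot}$, $\Zone{\cdot}$, and $\Ztwo{\cdot}$. Since each graph has only $2n \le 22$ vertices, every quantity involved is a concrete finite search, so the task is to organize the verification rather than to discover any structural phenomenon. Throughout, the governing principle is that by Proposition \ref{prop:LeakyIFFforts} a set $S$ is an $\ell$-leaky forcing set of $G$ exactly when $S$ meets every $\ell$-leaky fort of $G$; hence $\Zl{G}$ equals the size of a minimum transversal of the family of $\ell$-leaky forts.

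For the upper bounds I would, for each graph and each $\ell \in \{0,1,2\}$, exhibit an explicit candidate set $S$ of the claimed size and certify it. The cheapest certification is to produce, for every placement of the $\ell$ leaks, a valid sequence of forces starting from $S$; equivalently, one checks that $S$ intersects each $\ell$-leaky fort. Monotonicity (Lemma \ref{lem:moreleaksinq}) lets me bracket the three values of each graph together, since $\Zf{G} \le \Zone{G} \le \Ztwo{G}$, and at the upper end of the ranges the classical values $\Zf{P(n,2)}=6$ and $\Zf{P(n,3)}=8$ from Theorem \ref{thm:ZF k=3} anchor the $\ell=0$ column. The $\ell = 1, 2$ candidates are then built by augmenting an optimal zero forcing set with a small number of additional vertices and re-certifying; Lemma \ref{lem:twodifferentleaky} (requiring two distinct forcing sequences onto each uncolored vertex for the step from $\ell-1$ to $\ell$) is a convenient incremental check here.

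For the lower bounds, for each claimed value $z = \Zl{G}$ I would show that no set of size $z-1$ is an $\ell$-leaky forcing set. By Proposition \ref{prop:LeakyIFFforts} it suffices to exhibit a subfamily of $\ell$-leaky forts whose minimum transversal already has size $z$, so that any set of size $z-1$ misses one of them; concretely one picks a leak placement for each deficient candidate that strands an uncolored fort. Again monotonicity helps, since a lower bound for a smaller $\ell$ descends as a lower bound, letting me reuse the $\ell=0$ analysis. I would exploit the automorphism group of $P(n,k)$ (the rotation $i \mapsto i+1$ together with a reflection) to collapse leak placements and candidate sets into orbit representatives, which sharply cuts the number of cases that must be examined by hand.

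The main obstacle is purely the combinatorial size of the exhaustive check: certifying that a given $S$ survives all $\binom{2n}{2}$ leak placements, and ruling out every size-$(z-1)$ candidate, is feasible by hand only after heavy use of symmetry, and the bookkeeping is error-prone. This is exactly why I would lean on the fort-intersection reformulation as the backbone of a machine-checkable certificate: enumerate the $\ell$-leaky forts of each of the ten graphs, compute a minimum transversal, and match it against the exhibited forcing sets. Accordingly, I would record the explicit forcing sets and stranding forts as the human-readable certificates, and defer the full enumeration to the algorithm of \cite{moruzziLeakyForcing}, which is built precisely on Proposition \ref{prop:LeakyIFFforts}, so that the entries of Table \ref{Tab:LowK2K3} are simultaneously hand-verifiable in each individual case and confirmed in aggregate by computer.
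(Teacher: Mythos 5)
Your proposal is correct and takes essentially the same approach as the paper: the paper's proof is precisely a finite verification (by hand and by the computer algorithm of \cite{moruzziLeakyForcing}) built on the fort-intersection criterion of Proposition \ref{prop:LeakyIFFforts}, which is the backbone of your plan. The only slip worth noting is that Theorem \ref{thm:ZF k=3} cannot ``anchor'' the $\ell=0$ column, since its hypotheses ($n\ge 10$ for $k=2$, $n\ge 12$ for $k=3$) exclude exactly the ranges in Table \ref{Tab:LowK2K3}; those zero forcing values must also come from the direct finite check, which your method covers anyway.
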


\begin{table}[h!]
\begin{center}
\begin{tabular}{|c|c|c|c|}
 \hline
     $\Zl{P(n,2)}$ & $\ell=0$ & $\ell=1$ & $\ell=2$ \\
     \hline
    $n=5$ & 5&5&5\\
         \hline
    $n=6$ &4&6&6\\
         \hline
    $n=7$ &5&6&6\\
         \hline
    $n=8$ &5&5&7 \\
         \hline
    $n=9$ & 6&6&8\\
             \hline
\end{tabular}
\hspace{5mm}
\begin{tabular}{|c|c|c|c|}
 \hline
     $\Zl{P(n,3)}$ & $\ell=0$ & $\ell=1$ & $\ell=2$ \\
     \hline
    $n=7$ & 6&6&6\\
         \hline
    $n=8$ &6&6&8\\
         \hline
    $n=9$ &6&6&8\\
         \hline
    $n=10$ &8&8&8 \\
         \hline
    $n=11$ & 7&7&9\\
             \hline
\end{tabular}
\end{center}
\caption{Values of $\Zl{P(n,2)}$ for $5\leq n\leq 9$ and values of $\Zl{P(n,3)}$ for $7\leq n\leq 11$, for $\ell=0,1,2$.}
\label{Tab:LowK2K3}
\end{table}

For the remainder of this section, we focus on 1-leaky and 2-leaky forcing for ${P}(n,k)$.

\subsection{1-Leaky Forcing Number for Generalized Petersen Graphs}

We provide a generalization of Theorem \ref{thm:zg_gpg_2k+2} for $1$-leaky forcing. While we achieve the same upper bound, \cite{rashidi2020computing} uses the set $\{y_1, \ldots, y_{2k+2}\}$ which is a zero forcing set but not a $1$-leaky forcing set (try a leak at $y_k$). We will now define an alternative zero forcing set (see Figure \ref{fig:GP1Leak} for an example) and then show that it is also a $1$-leaky forcing set.

\newcommand\PeteN{15}
\newcommand\PeteK{3}
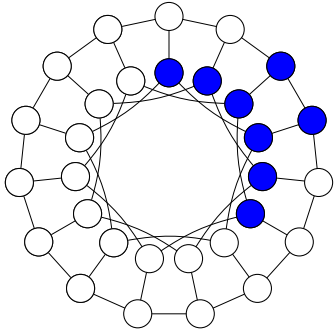
\begin{figure}[h!]
\begin{center}
\scalebox{.5}{

\begin{tikzpicture}[every node/.style={draw,circle,thick}]

  \graph[clockwise, radius=4cm] {subgraph C_n [n=\PeteN,name=A] };

  \graph[clockwise, radius=2.5cm] {subgraph I_n [n=\PeteN,name=B] }; 

  \foreach \i in {1,2,...,\PeteN}{
    \draw (A \i) -- (B \i);
  }

  \foreach \i [evaluate={\j=int(mod(\i+\PeteK -1,\PeteN)+1)}] in {1,2,...,\PeteN}{
    \draw (B \j) to [bend left=10] (B \i);
  }

  \foreach \j in {1,2,...,\PeteN}{ 
    \node[fill=white] at (A \j) {\phantom{00}};
    \node[fill=white] at (B \j) {\phantom{00}};
  }

  \foreach \j in {3,4}{
    \node[fill=blue] at (A \j) {\phantom{00}};
  }

  \foreach \j in {1,2,...,6}{
    \node[fill=blue] at (B \j) {\phantom{00}};
  }
\end{tikzpicture}
}
\end{center}
\caption{The generalized Peterson graph $P(15,3)$ with the $1$-leaky forcing set from Lemma \ref{lem:gponezfs} in blue.}
\label{fig:GP1Leak}
\end{figure}

\begin{lemma} \label{lem:gponezfs}
The set $S=\{x_1, x_2, \dots, x_{2k}, y_k, y_{k+1}\}$ is a zero forcing set of $P(n,k)$.
\end{lemma}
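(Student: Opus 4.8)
The plan is to exhibit an explicit sequence of forces; since we only need to show $S$ is a (classical) zero forcing set, there are no leaks to contend with. Recall that in $P(n,k)$ each inner vertex $x_i$ has neighbours $x_{i+k}, x_{i-k}, y_i$ and each outer vertex $y_i$ has neighbours $y_{i-1}, y_{i+1}, x_i$, all indices taken modulo $n$. The strategy has two stages: first use the two blue outer vertices $y_k, y_{k+1}$ together with the blue spokes $x_1, \dots, x_{2k}$ to colour a contiguous arc of the outer cycle blue, and then alternately force one inner and one outer vertex at a time to sweep the blue region all the way around the graph.

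For the first stage, I would run two forcing chains along the outer cycle in opposite directions. Starting from $y_k$, each vertex $y_j$ forces $y_{j-1}$ because its other two neighbours $y_{j+1}$ and $x_j$ are already blue; this is valid as long as the spoke $x_j$ is blue, i.e.\ for $j = k, k-1, \dots, 1$, and it terminates after $y_1$ forces $y_n$ (since $x_n$ is white). Symmetrically, starting from $y_{k+1}$, each $y_j$ forces $y_{j+1}$ for $j = k+1, \dots, 2k$, terminating after $y_{2k}$ forces $y_{2k+1}$. After this stage the blue set is exactly $\{x_1, \dots, x_{2k}\} \cup \{y_n, y_1, y_2, \dots, y_{2k+1}\}$.

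The second stage is an induction on a single index $m$, with invariant: the vertices $x_1, \dots, x_m$ and $y_n, y_1, \dots, y_{m+1}$ are all blue, for $2k \le m \le n-1$; the first stage establishes the base case $m = 2k$. For the step, I first claim $x_{m+1-k}$ forces $x_{m+1}$: the index $m+1-k$ lies in $\{1, \dots, m\}$, so $x_{m+1-k}$ is blue, and its remaining two neighbours $x_{m+1-2k}$ (with $1 \le m+1-2k \le m$) and $y_{m+1-k}$ (with $1 \le m+1-k \le m+1$) are blue by the invariant, so $x_{m+1}$ is its unique white neighbour. Next, $y_{m+1}$ forces $y_{m+2}$, since its neighbours $y_m$ and the just-coloured $x_{m+1}$ are blue. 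This re-establishes the invariant with $m$ replaced by $m+1$. Iterating up to $m = n-1$ leaves only $x_n$ white, which is then forced by $x_{n-k}$; hence every vertex is blue and $S$ is a zero forcing set.

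The main obstacle is purely the index bookkeeping modulo $n$: one must check at every force that the forcing vertex has a genuinely unique white neighbour, which hinges on the invariant describing the blue set exactly (neither too large nor too small). The only genuinely delicate point is the boundary behaviour when the two fronts meet, in particular the extremal case $n = 2k+1$ (i.e.\ $k = \frac{n-1}{2}$), where the outer chains of the first stage already colour the entire outer cycle; I would verify separately that the inductive step goes through verbatim there, with the inner forces simply wrapping around the cycle.
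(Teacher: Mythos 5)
Your proof is correct and takes essentially the same approach as the paper's: first color an arc of the outer cycle using the blue spokes $x_1,\dots,x_{2k}$, then sweep clockwise by induction, with $x_{m+1-k}$ forcing $x_{m+1}$ and the outer chain advancing one vertex per step. The only difference is bookkeeping: you run the initial outer chain in both directions (pre-coloring $y_n, y_1, \dots, y_{2k+1}$ so the sweep never needs to wrap around), whereas the paper colors only $y_k, \dots, y_{2k}$ and lets the clockwise induction continue modulo $n$.
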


\begin{proof}
All vertices $y_j$ for 
$k+2 \le j \le 2k$
will be forced blue by vertices $y_{j-1}$, beginning with $y_{k+1}$ in a clockwise direction.
Next, we continue by induction.
Assume that all vertices 
$$x_{1}, x_{2} \dots, x_m \hspace{1cm} \textrm{and} \hspace{1cm} y_{k}, y_{k+1} \dots, y_{m}$$ are blue for $m \ge 2k$. Note that $y_{m+1}$ is the only white neighbor of the blue vertex $y_m$, hence will be forced by it. Since $m\ge 2k$ and $k\ge 1$, then $k\le m+1-k\le m$ and therefore $x_{m+1-k}$ and $y_{m+1-k}$ are blue. Also, $1\le m+1-2k\le m$, so $x_{m+1-2k}$ is blue. Therefore, $x_{m+1-k}$ can force $x_{m+1}$. By induction, this forcing in the clockwise direction will force all vertices of $P(n,k)$.
Note that the forcing can be done in a counterclockwise direction producing the same result.
\end{proof}

\begin{thm} \label{thm:gpone}
For $n \ge 2k$ and $k \ge 1$, $\Zone{{P}(n,k)} \le 2k+2$.
\end{thm}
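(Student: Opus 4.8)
The plan is to show that the zero forcing set $S=\{x_1,\dots,x_{2k},y_k,y_{k+1}\}$ from Lemma~\ref{lem:gponezfs} can be augmented by exactly two vertices to yield a $1$-leaky forcing set, which immediately gives the bound $\Zone{P(n,k)}\le 2k+2$. By Lemma~\ref{lem:twodifferentleaky}, a set is a $1$-leaky forcing set precisely when it is a (zero-)forcing set for which every vertex outside the set can be forced along two distinct forcing sequences using two different forcers. So the strategy is to exhibit an augmented set $S'$ of size $2k+2$ that forces all of $P(n,k)$ and to verify that every white vertex admits two independent forces. The natural candidate, echoing the structure of $S$, is to add two more inner vertices so that forcing can propagate symmetrically in both the clockwise and counterclockwise directions around the graph.

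First I would recall from the proof of Lemma~\ref{lem:gponezfs} that $S$ forces $P(n,k)$ by propagating \emph{clockwise}: the outer cycle vertex $y_m$ forces $y_{m+1}$, and then $x_{m+1-k}$ forces $x_{m+1}$. The key observation is that the identical argument run in the \emph{counterclockwise} direction also forces the entire graph from a symmetric starting configuration. The idea is then to choose $S'$ so that both the clockwise chain and a counterclockwise chain are available \emph{simultaneously} and independently. Concretely, I would take $S'=\{x_1,\dots,x_{2k},y_k,y_{k+1}\}\cup\{y_{?},y_{?}\}$ (or an analogous choice adding two inner vertices on the ``far side''), selected so that forcing can be initiated from two well-separated positions on the outer cycle. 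The point is that wherever the single leak sits, it can obstruct at most one of the two propagation directions at any given vertex, so the other direction still reaches every vertex.

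The cleanest way to organize the two-force verification is to argue directly rather than tracking sequences: for each white vertex $v$, I would produce one forcing process that forces $v$ using the clockwise propagation and a second forcing process that forces $v$ using the counterclockwise propagation, and observe that the forcer of $v$ differs between the two (one is the clockwise neighbor, the other the counterclockwise neighbor, and for inner vertices the two spoke-based forcers are distinct since $k<n/2$). Because these two processes use genuinely different forcers for $v$, Lemma~\ref{lem:twodifferentleaky} certifies that $S'$ overcomes any single leak. The bookkeeping splits naturally into outer vertices $y_j$ (forced along the cycle from either side) and inner vertices $x_j$ (forced via the spoke edge $x_jy_j$ once $y_j$ is blue, or via an inner edge $x_{j-k}x_j$ from the clockwise chain, giving two distinct forcers).

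The main obstacle I anticipate is confirming that a single pair of added vertices really does enable \emph{both} directions to run independently for \emph{all} of the outer vertices, not just near the seed. In the clockwise-only proof, the inner vertices are forced in a specific cascade that depends on the clockwise ordering; I must check that the counterclockwise cascade forces the same inner vertices using the mirrored edges $x_{m+1+2k}\to x_{m+1}$ and that the two cascades together never rely on the same forcer at a shared vertex. Handling the indices modulo $n$ at the ``seam'' where the two directions meet — and ensuring the hypothesis $n\ge 2k$ leaves enough room for both chains to be well-defined — is the delicate part; everything else reduces to the same inductive propagation already established in Lemma~\ref{lem:gponezfs}, applied once in each direction.
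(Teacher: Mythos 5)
There is a genuine gap, and it is a counting error that undermines the whole plan. The set $S=\{x_1,\dots,x_{2k},y_k,y_{k+1}\}$ from Lemma~\ref{lem:gponezfs} already has $2k+2$ vertices ($2k$ inner vertices plus the two outer vertices $y_k,y_{k+1}$), not $2k$. Your proposal to augment $S$ by two additional vertices therefore produces a set $S'$ of size $2k+4$, and exhibiting such an $S'$ as a $1$-leaky forcing set would only establish $\Zone{P(n,k)}\le 2k+4$, which is strictly weaker than the theorem. The inconsistency is visible in your own writeup: you describe $S'$ both as ``$S$ plus two vertices'' and as having ``size $2k+2$,'' which cannot both hold.

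The fix --- and this is exactly what the paper does --- is that no augmentation is needed at all: $S$ itself is the $1$-leaky forcing set. The obstacle you anticipated (whether one seed can drive both propagation directions) dissolves because $S$ is symmetric under the index reversal $i\mapsto 2k+1-i$, so the counterclockwise cascade is available from $S$ for free; indeed, the proof of Lemma~\ref{lem:gponezfs} already remarks that the same set forces in the counterclockwise direction. With that observation, your core mechanism is correct and is the paper's mechanism: every vertex outside $S$ is forced clockwise by one forcer and counterclockwise by a different forcer (for outer vertices, $y_{m-1}$ versus $y_{m+1}$; for inner vertices, $x_{m-k}$ versus $x_{m+k}$, distinct since $2k<n$), so Lemma~\ref{lem:twodifferentleaky} certifies that $S$ tolerates any single leak, giving $\Zone{P(n,k)}\le |S| = 2k+2$.
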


\begin{proof}
Let $S=\{x_1, x_2, \dots, x_{2k}, y_k, y_{k+1}\}$. By applying Lemma \ref{lem:twodifferentleaky}, it suffices to show that each vertex not in $S$ can be forced in two ways. Observe that the symmetry of the proof of Lemma \ref{lem:gponezfs} provides that all vertices not in $S$ can forced either clockwise (as in the proof above) or counterclockwise. 
\end{proof}

\begin{thm} \label{thm:gponePn2exact}
For $n\geq 5$,
\[\Zone{P(n,2)}= \begin{cases}
5 & \text{ if } n=5,8,\\
6 & \text{ if } n=6,7 \text{ or } n \ge 9.
\end{cases}\]
\end{thm}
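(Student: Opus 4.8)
The plan is to establish both the upper bounds and the matching lower bounds for each value of $n$. For the upper bounds, Theorem \ref{thm:gpone} already gives $\Zone{P(n,2)} \le 2k+2 = 6$ for all $n \ge 4$, so the bound of $6$ for $n \ge 9$ (and $n = 6,7$) is immediate. For the two exceptional cases $n = 5,8$ where the answer is claimed to be $5$, the small values $n=5,8$ are already recorded in Proposition \ref{prop:basicgpresultsk2} and Table \ref{Tab:LowK2K3}; indeed, the entire statement for $5 \le n \le 9$ is simply a restatement of the $\ell=1$ column of that table. Thus the only genuinely new content is the claim that $\Zone{P(n,2)} = 6$ for all $n \ge 10$, and this is where the work lies.

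For $n \ge 10$, first I would invoke the chain of inequalities from Lemma \ref{lem:moreleaksinq} together with Theorem \ref{thm:ZF k=2}, which gives $\Zone{P(n,2)} \ge \Zf{P(n,2)} = 6$ for $n \ge 10$. Combined with the upper bound $\Zone{P(n,2)} \le 6$ from Theorem \ref{thm:gpone}, this immediately pins down $\Zone{P(n,2)} = 6$ for every $n \ge 10$, completing the $n \ge 9$ row once the small case $n=9$ is read off the table. The elegance here is that the lower bound comes for free from the known zero forcing number, so no fort-based argument is needed in this regime.

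The main obstacle — and really the only subtle point — is verifying the exceptional values $n = 5,8$ where $\Zone{P(n,2)} = 5 < 6$. These do not follow from the general bound and must be handled by the explicit computations underlying Table \ref{Tab:LowK2K3}. For the upper bound one exhibits a specific $5$-element set and checks, using Proposition \ref{prop:LeakyIFFforts} (that a set is a $1$-leaky forcing set iff it intersects every $1$-leaky fort) or directly via Lemma \ref{lem:twodifferentleaky}, that it forces the whole graph regardless of leak placement. For the matching lower bound one argues that no $4$-element set can be a $1$-leaky forcing set, again via forts. Since the paper explicitly states these values were verified both by hand and by the algorithm of \cite{moruzziLeakyForcing}, I would cite Proposition \ref{prop:basicgpresultsk2} rather than reproduce the casework.

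In summary, the proof decomposes as follows: the values for $5 \le n \le 9$ are exactly the $\ell=1$ entries of Table \ref{Tab:LowK2K3} (Proposition \ref{prop:basicgpresultsk2}); for $n \ge 10$ the upper bound $\Zone{P(n,2)} \le 6$ is Theorem \ref{thm:gpone} and the lower bound $\Zone{P(n,2)} \ge \Zf{P(n,2)} = 6$ follows from Lemma \ref{lem:moreleaksinq} and Theorem \ref{thm:ZF k=2}, yielding equality. The reader should note that the answer is genuinely non-monotone in $n$ (it dips to $5$ at $n=5,8$ before stabilizing at $6$), so the clean asymptotic argument cannot be extended downward to cover the exceptional cases, which is precisely why the explicit small-case verification is indispensable.
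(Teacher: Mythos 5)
Your proposal is correct and follows essentially the same route as the paper: the values for $5 \le n \le 9$ are read off Proposition \ref{prop:basicgpresultsk2}, and for $n \ge 10$ the upper bound of Theorem \ref{thm:gpone} is matched against the lower bound $\Zone{P(n,2)} \ge \Zf{P(n,2)} = 6$ from Theorem \ref{thm:ZF k=2}. The only difference is cosmetic: you explicitly invoke Lemma \ref{lem:moreleaksinq} for the inequality $\Zone{G} \ge \Zf{G}$, which the paper's one-line proof leaves implicit.
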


\begin{proof}
The result follows from Proposition \ref{prop:basicgpresultsk2} for $n\leq 9$ and from Theorems \ref{thm:ZF k=2} and \ref{thm:gpone} for $n\geq 10$.
\end{proof}

We now apply this upper bound to determine the exact $1$-leaky forcing number for the $k=1$ and $k=2$ cases.

\begin{thm}\label{thm:gponePn3exact}
For $n\geq 7$,
\[\Zone{P(n,3)}= \begin{cases}
6 & \text{ if } n=7,8,9\\
7 & \text{ if } n = 11 \\
8 & \text{ if } n =10 \text { or }n\geq 12.
\end{cases}\]
\end{thm}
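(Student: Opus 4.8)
The plan is to establish the three cases by combining the known upper bound from Theorem \ref{thm:gpone}, the small-case computations in Proposition \ref{prop:basicgpresultsk2}, the classical zero forcing value from Theorem \ref{thm:ZF k=3}, and explicit lower-bound arguments via forts (Proposition \ref{prop:LeakyIFFforts}). For $n=7,8,9$, the result is immediate from Proposition \ref{prop:basicgpresultsk2}, which already records $\Zone{P(n,3)}=6$ in Table \ref{Tab:LowK2K3}; similarly $n=10,11$ are handled directly by that table, giving $8$ and $7$ respectively. The substantive work is therefore the range $n\ge 12$, where Theorem \ref{thm:ZF k=3} gives $\Zf{P(n,3)}=8$ and Theorem \ref{thm:gpone} (with $k=3$) gives $\Zone{P(n,3)}\le 2k+2 = 8$. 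Since $\Zf{P(n,3)}\le \Zone{P(n,3)}$ by Lemma \ref{lem:moreleaksinq}, we would immediately obtain $\Zone{P(n,3)}=8$ for $n\ge 12$.

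So the entire theorem collapses to citing earlier results: first I would dispatch $n\in\{7,8,9,10,11\}$ by pointing to the relevant entries of Table \ref{Tab:LowK2K3} via Proposition \ref{prop:basicgpresultsk2}. Then for $n\ge 12$ I would write the two-line chain of inequalities
\[
8 = \Zf{P(n,3)} \le \Zone{P(n,3)} \le 2\cdot 3 + 2 = 8,
\]
invoking Theorem \ref{thm:ZF k=3} for the left equality, Lemma \ref{lem:moreleaksinq} for the first inequality, and Theorem \ref{thm:gpone} for the second. This forces equality throughout and completes the case.

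The only place where I would expect any genuine friction is verifying that the boundary cases align with the stated piecewise formula — in particular confirming that $n=11$ genuinely yields $7$ rather than $8$, since this is the one value in the range that differs from the $n\ge 12$ asymptotic answer and from its neighbor $n=10$. This anomaly is not something to reprove here; it is exactly the content of the $n=11$ entry of Table \ref{Tab:LowK2K3}, which Proposition \ref{prop:basicgpresultsk2} asserts was verified computationally (and by hand). The main conceptual obstacle, if any, lies upstream in Theorem \ref{thm:ZF k=3} and in the computational verification underlying Proposition \ref{prop:basicgpresultsk2}; the present proof itself is a short assembly of those ingredients with no new forcing analysis required.
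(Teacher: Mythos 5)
Your proposal is correct and matches the paper's own proof, which likewise dispatches the small cases by citing Proposition \ref{prop:basicgpresultsk2} and settles $n\ge 12$ by sandwiching $\Zone{P(n,3)}$ between $\Zf{P(n,3)}=8$ (Theorem \ref{thm:ZF k=3}, via Lemma \ref{lem:moreleaksinq}) and the upper bound $2k+2=8$ of Theorem \ref{thm:gpone}. If anything, your write-up is slightly more careful than the paper's, which nominally cites the table only for $n\le 9$ even though the $n=10,11$ entries are also needed, and which leaves the monotonicity step $\Zf{G}\le\Zone{G}$ implicit.
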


\begin{proof}
The result follows from Proposition \ref{prop:basicgpresultsk2} for $n\leq 9$ and from Theorems \ref{thm:ZF k=2} and \ref{thm:gpone} for $n\geq 12$.
\end{proof}

\subsection{Upper Bound for 2-Leaky Forcing Number for Generalized Petersen Graphs}

In this subsection, we will provide an upper bound for the 2-leaky forcing number of $P(n,k)$ for $7 \le k \le \frac{n-1}{2}$ and $n\ge 6k+6$. Note that the bound $k\geq 7$ is required for the proof of Lemma \ref{lem:forcedownacrossandback}. Furthermore, the bound on $n$ in terms of $k$ is required for the proof of Lemma \ref{lem:alwaysforceuptoS}. In order to construct a 2-leaky forcing set, we introduce the following notation. 

Let us partition the vertices of $P(n,k)$ into blocks. We define the first \textit{block} to be $\blockb_1 := A_1 \cup B_1 \cup C_1 \cup D_1$, consisting of the following sets of vertices (see Figures \ref{fig:GP2Leak} and \ref{fig:GPFlat}):
\begin{multicols}{2}
\begin{itemize}
    \item[] $A_1 =  \{y_1, \ldots, y_{2k+2} \}$
    \item[] $B_1 = \{x_1, \ldots, x_{2k+2}\}$

    \columnbreak
    
    \item[] $C_1 =  \{y_{2k+3}, \ldots, y_{4k+4} \}$
    \item[] $D_1 = \{x_{2k+3}, \ldots, x_{4k+4}\}$
\end{itemize}
\end{multicols}

We iterate this description to define the $i$th block for $i \le  \lfloor \frac{n}{4k+4} \rfloor$ as follows:
\begin{multicols}{2}
\begin{itemize}
    \item[] $A_i =  \{y_{(4k+4)(i-1)+1}, \ldots, y_{(4k+4)(i-1)+2k+2} \}$
    \item[] $B_i = \{x_{(4k+4)(i-1)+1}, \ldots, x_{(4k+4)(i-1)+2k+2} \}$
    \columnbreak
    \item[] $C_i =  \{y_{(4k+4)(i-1)+2k+3}, \ldots, y_{(4k+4)(i-1)+4k+4} \}$
    \item[] $D_i = \{x_{(4k+4)(i-1)+2k+3}, \ldots, x_{(4k+4)(i-1)+4k+4} \}$
\end{itemize}
\end{multicols} 
For $\displaystyle i \le \left\lfloor \frac{n}{4k+4} \right\rfloor $, we denote the $i$-th block as $\blockb_i := A_i \cup B_i \cup C_i \cup D_i$.

A potential incomplete final block occurs when $4k+4$ does not evenly divide $n$. This block has index $i = \lceil \frac{n}{4k+4} \rceil$ and consists of all the remaining vertices as given above, excluding all vertices whose index exceeds $n$. Note that it may be the case that $C_{\lceil \frac{n}{4k+4} \rceil}$ and $D_{\lceil \frac{n}{4k+4} \rceil}$ are empty, in which case $A_{\lceil \frac{n}{4k+4} \rceil}$ may contain fewer than $2k+2$ vertices but will be directly adjacent to $A_1$.

Throughout this subsection, similar to the indices of the vertices, we will take the indices of the blocks $\blockb_i$ (and the sets $A_i, B_i, C_i$, and $D_i$) to be modulo $\lceil \frac{n}{4k+4} \rceil$.

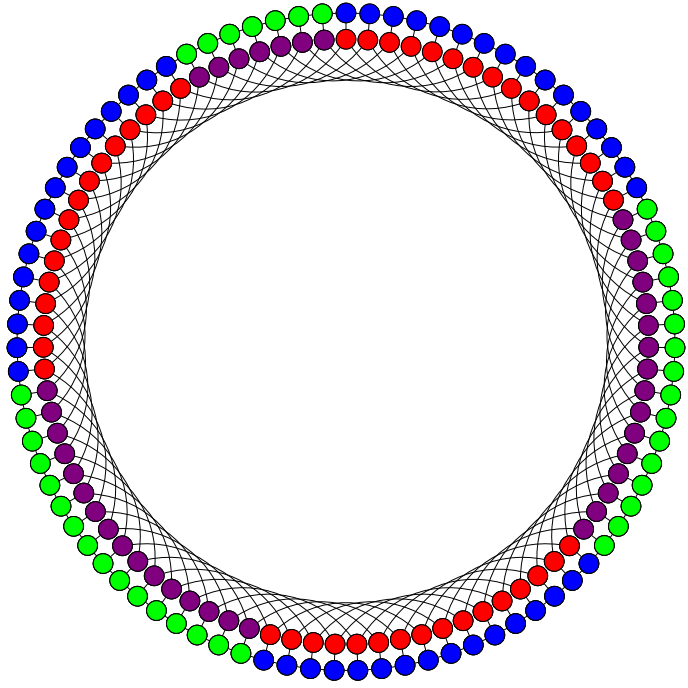
\begin{figure}[h!]
\centering 
\scalebox{.35}{
\begin{tikzpicture}[every node/.style={draw,circle,thin}]
  \renewcommand\PeteN{87}
  \renewcommand\PeteK{7}

  \graph[clockwise, radius=12.5cm] {subgraph C_n [n=\PeteN,name=A] };

  \graph[clockwise, radius=11.5cm] {subgraph I_n [n=\PeteN,name=B] }; 

  \foreach \i in {1,2,...,\PeteN}{
    \draw (A \i) -- (B \i);
  }

  \foreach \i [evaluate={\j=int(mod(\i+\PeteK -1,\PeteN)+1)}] in {1,2,...,\PeteN}{
    \draw (B \j) to [bend left=40] (B \i); 
  }

  \foreach \j in {1,2,...,\PeteN}{
    \node[fill=white] at (A \j) {\phantom{00}};
    \node[fill=white] at (B \j) {\phantom{00}};
  }

  \foreach \j in {1,2,...,16}{
    \node[fill=blue] at (A \j) {\phantom{00}};
  }
  \foreach \j in {1,2,...,16}{
    \node[fill=red] at (B \j) {\phantom{00}};
  }
  \foreach \j in {17,18,...,32}{
    \node[fill=green] at (A \j) {\phantom{00}};
  }
  \foreach \j in {17,18,...,32}{
    \node[fill=violet] at (B \j) {\phantom{00}};
  }
  \foreach \j in {33,34,...,48}{
    \node[fill=blue] at (A \j) {\phantom{00}};
  }
  \foreach \j in {33,34,...,48}{
    \node[fill=red] at (B \j) {\phantom{00}};
  }
  \foreach \j in {49,50,...,64}{
    \node[fill=green] at (A \j) {\phantom{00}};
  }
  \foreach \j in {49,50,...,64}{
    \node[fill=violet] at (B \j) {\phantom{00}};
  }
  \foreach \j in {65,66,...,80}{
    \node[fill=blue] at (A \j) {\phantom{00}};
  }
  \foreach \j in {65,66,...,80}{
    \node[fill=red] at (B \j) {\phantom{00}};
  }
  \foreach \j in {81,82,...,87}{
    \node[fill=green] at (A \j) {\phantom{00}};
  }
  \foreach \j in {81,82,...,87}{
    \node[fill=violet] at (B \j) {\phantom{00}};
  }
\end{tikzpicture}
}
\caption{The generalized Peterson graph $P(87,7)$, with $A_1,A_2, A_3$ in blue, $B_1,B_2, B_3$ in red, $C_1,C_2, C_3$ in green, and $D_1,D_2, D_3$ in purple. Note that $C_3$ and $D_3$ are incomplete due to the fact that $16$ does not evenly divide $87$. 
By Theorem \ref{thm:gptwo}, $A_1\cup A_2\cup A_3$ is a 2-leaky forcing set.}
\label{fig:GP2Leak}
\end{figure}

\begin{figure}[!ht]
\scalebox{0.62}{
\begin{tikzpicture}[node distance=0.5cm]
\node[circle, draw, fill=lightgray, minimum size=1cm, font=\footnotesize] (a1) {$y_1$};
\node[circle, draw, fill=lightgray, right=0.5cm of a1, minimum size=1cm, font=\footnotesize] (a2) {$y_2$};
\node[right=0.5cm of a2] (a1dots) {$\cdots$};
\node[circle, draw, fill=lightgray, right=0.5cm of a1dots, minimum size=1cm, font=\footnotesize] (ap1)  {$y_{k+1}$};
\node[circle, draw, fill=lightgray, right=0.5cm of ap1, minimum size=1cm, font=\footnotesize] (ap2)  {$y_{k+2}$};

\node[right=0.5cm of ap2] (a2dots) {$\cdots$};
\node[circle, draw, fill=lightgray, right=0.5cm of a2dots, minimum size=1cm, font=\footnotesize] (a2k1) {$y_{2k+1}$};
\node[circle, draw, fill=lightgray, right=0.5cm of a2k1, minimum size=1cm, font=\footnotesize] (a2k2) {$y_{2k+2}$};

\node[circle, draw, below=0.5cm of a1, minimum size=1cm, font=\footnotesize] (b1) {$x_1$};
\node[circle, draw, below=0.5cm of a2, minimum size=1cm, font=\footnotesize] (b2) {$x_2$};
\node[right=0.5cm of b2] (b1dots) {$\cdots$};
\node[circle, draw, below=0.5cm of ap1, minimum size=1cm, font=\footnotesize] (bp1) {$x_{k+1}$};
\node[circle, draw, below=0.5cm of ap2, minimum size=1cm, font=\footnotesize] (bp2) {$x_{k+2}$};
\node[right=0.5cm of bp2] (b2dots) {$\cdots$};
\node[circle, draw, below=0.5cm of a2k1, minimum size=1cm, font=\footnotesize] (b2k1) {$x_{2k+1}$};
\node[circle, draw, below=0.5cm of a2k2, minimum size=1cm, font=\footnotesize] (b2k2) {$x_{2k+2}$};

\draw (a1) -- (a2) -- (a1dots) -- (ap1) -- (ap2) -- (a2dots) -- (a2k1) -- (a2k2);

\draw (a1) -- (b1);
\draw (a2) -- (b2);
\draw (ap1) -- (bp1);
\draw (ap2) -- (bp2);
\draw (a1dots) -- (b1dots);
\draw (a2k1) -- (b2k1);
\draw (a2k2) -- (b2k2);
\node[circle, draw, right=0.5cm of a2k2, minimum size=1cm, font=\footnotesize] (c1) {$y_{2k+3}$};
\node[circle, draw, right=0.5cm of c1, minimum size=1cm, font=\footnotesize] (c2) {$y_{2k+4}$};
\node[right=0.5cm of c2] (c1dots) {$\cdots$};
\node[circle, draw, right=0.5cm of c1dots, minimum size=1cm, font=\footnotesize] (cp1) {$y_{3k+3}$};
\node[circle, draw, right=0.5cm of cp1, minimum size=1cm, font=\footnotesize] (cp2) {$y_{3k+4}$};
\node[right=0.5cm of cp2] (c2dots) {$\cdots$};
\node[circle, draw, right=0.5cm of c2dots, minimum size=1cm, font=\footnotesize] (c4k3) {$y_{4k+3}$};
\node[circle, draw, right=0.5cm of c4k3, minimum size=1cm, font=\footnotesize] (cn) {$y_{4k+4}$};

\node[circle, draw, right=0.5cm of b2k2, minimum size=1cm, font=\footnotesize] (d1) {$x_{2k+3}$};
\node[circle, draw, right=0.5cm of d1, minimum size=1cm, font=\footnotesize] (d2) {$x_{2k+4}$};
\node[right=0.5cm of d2] (d1dots) {$\cdots$};
\node[circle, draw, below=0.5cm of cp1, minimum size=1cm, font=\footnotesize] (dp1) {$x_{3k+3}$};
\node[circle, draw, below=0.5cm of cp2, minimum size=1cm, font=\footnotesize] (dp2) {$x_{3k+4}$};
\node[right=0.5cm of dp2] (d2dots) {$\cdots$};
\node[circle, draw, right=0.5cm of d2dots, minimum size=1cm, font=\footnotesize] (d4k3) {$x_{4k+3}$};
\node[circle, draw, right=0.5cm of d4k3, minimum size=1cm, font=\footnotesize] (dn) {$x_{4k+4}$};

\draw (a2k2) -- (c1) -- (c2) -- (c1dots) -- (cp1) -- (cp2) -- (c2dots) -- (c4k3) -- (cn);

\draw (c1) -- (d1);
\draw (c2) -- (d2);
\draw (cp1) -- (dp1);
\draw (cp2) -- (dp2);
\draw (c1dots) -- (d1dots);
\draw (c4k3) -- (d4k3);
\draw (cn) -- (dn);
\draw (c2dots) -- (d2dots);

\node[right=0.5cm of cn] (end1) {$\cdots$};
\node[right=0.5cm of dn] (end2) {$\cdots$};

\draw (cn) -- (end1);

\node[left=0.5cm of a1] (start1) {$\cdots$};
\node[left=0.5cm of b1] (start2) {$\cdots$};

\draw (a1) -- (start1);

\draw (d1) to[bend right=60] (dp1);
\draw (d2) to[bend right=60] (dp2);
\draw (dp1) to[bend right=60] (d4k3);
\draw (dp2) to[bend right=60] (dn);

\draw (b1) to[bend right=60] (bp1);
\draw (b2) to[bend right=60] (bp2);
\draw (bp1) to[bend right=60] (b2k1);
\draw (bp2) to[bend right=60] (b2k2);

\draw[decorate, decoration={brace, amplitude=20pt}] ([yshift=20pt]a1.north west) -- ([yshift=20pt]a2k2.north east) node[midway, above=30pt] {$A_i$};
\draw[decorate, decoration={brace, amplitude=20pt, mirror}] ([yshift=-40pt]b1.south west) -- ([yshift=-40pt]b2k2.south east) node[midway, below=30pt] {$B_i$};
\draw[decorate, decoration={brace, amplitude=20pt}] ([yshift=20pt]c1.north west) -- ([yshift=20pt]cn.north east) node[midway, above=30pt] {$C_i$};
\draw[decorate, decoration={brace, amplitude=20pt, mirror}] ([yshift=-40pt]d1.south west) -- ([yshift=-40pt]dn.south east) node[midway, below=30pt] {$D_i$};

\draw (a2dots) -- (b2dots);
\draw (b2k1) to[bend right=40] (d1dots.south west);
\draw (b2k2) to[bend right=50] (d1dots.south east);
\draw (d1) to[bend left=50] (b2dots.south west);
\draw (d2) to[bend left=40] (b2dots.south east);
\end{tikzpicture}
}

\caption{Schematic for the sets and vertex labels used within Section \ref{sec:gp}. The set labels are used in the proof of Theorem \ref{thm:gptwo}. The set of highlighted vertices are the 2-leaky forcing set used in Theorem \ref{thm:gptwo}.}
\label{fig:GPFlat}
\end{figure}
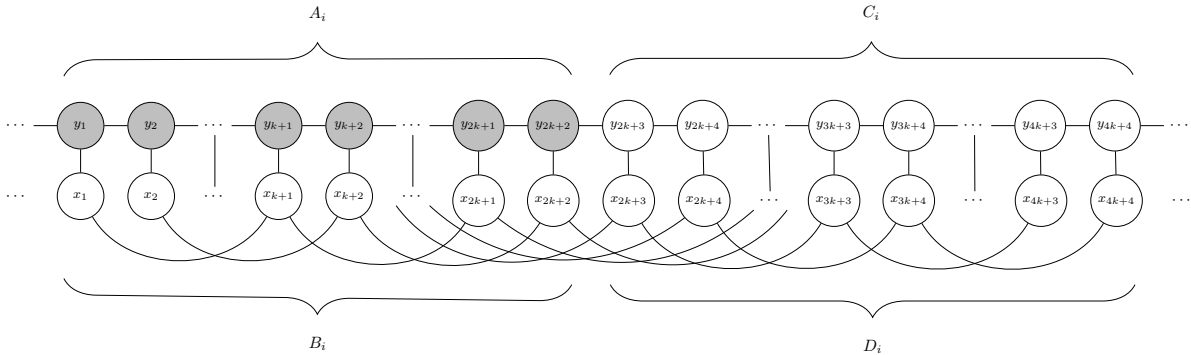

Before proving our upper bound for the 2-leaky forcing number, we require several lemmas. First, we show that each set $A_i$, which contains the full $2k+2$ vertices, is a zero forcing set.

\begin{lemma} \label{ZF} Each of the sets $A_i$, such that $|A_i|=2k+2$ (i.e.,  $1\leq i < \lceil \frac{n}{4k+4} \rceil$ and also perhaps $i = \frac{n}{4k+4}$ if $4k+4$ divides $n$) is a zero forcing set for $P(n,k)$. 
\end{lemma}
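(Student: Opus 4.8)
The plan is to reduce the claim to the single already-acknowledged fact that $A_1 = \{y_1, \ldots, y_{2k+2}\}$ is a zero forcing set, and then transport this to every other full block component $A_i$ via the rotational symmetry of $P(n,k)$. First I would observe that $A_1$ is exactly the set $\{y_1, \ldots, y_{2k+2}\}$ used in \cite{rashidi2020computing} to prove Theorem \ref{thm:zg_gpg_2k+2}; as noted in the paragraph preceding Lemma \ref{lem:gponezfs}, this set is a zero forcing set of $P(n,k)$, so $A_1$ is a zero forcing set. (If a self-contained verification were wanted, the forcing can be exhibited directly: for $2 \le i \le 2k+1$ each $y_i$ has both outer-cycle neighbors blue, so it forces its spoke neighbor $x_i$; the inner edges $x_j x_{j+k}$ then propagate the remaining inner vertices, after which the outer cycle fills in. But citing the established fact is cleaner.)

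Next I would record the rotational automorphism and use it to identify $A_i$ as a rotated copy of $A_1$. For any integer $t$ the index shift $\sigma_t \colon x_i \mapsto x_{i+t},\ y_i \mapsto y_{i+t}$ (all indices mod $n$) is a graph automorphism of $P(n,k)$, since each of the three edge families $\{y_i y_{i+1}\}$, $\{x_i y_i\}$, and $\{x_i x_{i+k}\}$ is invariant under adding $t$ to every index. Whenever $|A_i| = 2k+2$, the definition of $A_i$ gives $A_i = \sigma_t(A_1)$ with $t = (4k+4)(i-1)$, because $\sigma_t(\{y_1, \ldots, y_{2k+2}\}) = \{y_{t+1}, \ldots, y_{t+2k+2}\}$, which is precisely $A_i$.

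Finally I would invoke the standard principle that automorphisms carry zero forcing sets to zero forcing sets: if $S$ forces $V(G)$ through some sequence of forces and $\phi \in \mathrm{Aut}(G)$, then applying $\phi$ to each force in turn shows that $\phi(S)$ forces $V(G)$, since both adjacency and the ``unique white neighbor'' condition are preserved by $\phi$. Applying this with $S = A_1$ and $\phi = \sigma_t$ yields that every full $A_i$ is a zero forcing set. The only real obstacle here is bookkeeping rather than mathematical depth: one must confirm that $A_i$ is a genuine (untruncated) rotation of $A_1$, which is exactly what the hypothesis $|A_i| = 2k+2$ guarantees, as it excludes the possibly-incomplete final block $A_{\lceil n/(4k+4) \rceil}$. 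Once that identification is in place, no inequalities or case analysis remain.
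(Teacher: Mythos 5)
Your proposal is correct and takes essentially the same route as the paper: the paper verifies directly that $A_1$ forces (each $y_i$ with $2 \le i \le 2k+1$ forces $x_i$, then forcing continues in both directions) and then dismisses the remaining full blocks with ``analogously,'' which is precisely the rotational-symmetry transfer you spell out with the shift automorphism $\sigma_t$. Your version is slightly more rigorous in making the symmetry step explicit (and in noting that the hypothesis $|A_i| = 2k+2$ rules out the truncated block), but the underlying argument is the same.
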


\begin{proof}
We will prove that $A_1$ is a zero forcing set. The blue vertices $y_i$, $2 \le i \le 2k+1$ will force vertices $x_i$, $2 \le i \le 2k+1$. Then, the forcing continues in both directions. Analogously, the remaining sets $A_i$, such that $2\leq i\leq \lceil \frac{n}{4k+4} \rceil$ and $|A_i|=2k+2$, are also zero forcing sets. 
\end{proof}

Note that Lemma \ref{ZF} implies that $S = \bigcup_{i} A_i$ is also a zero forcing set for $P(n,k)$. Next, we turn our attention to leaky zero forcing. In the proof of Theorem \ref{thm:gptwo}, we will demonstrate a 2-leaky forcing set by showing that for any one leak, every other vertex can be forced in at least two different ways (see Lemma \ref{lem:twodifferentleaky}). We will call the specific vertex that we are trying to force the {\it target} vertex. 
The next lemma shows that, provided the leak and target are within the specified block, all vertices with indices not between the indices of the two chosen vertices can be forced blue.

\begin{lemma}\label{lem:alwaysforceuptoS}
Let $S = \bigcup_{i} A_i$ in $P(n,k)$ and let $n\ge 6k+6$. Choose two distinct vertices, say $y_s$ or $x_s$ and either $y_{s'}$ or $x_{s'}$ with $1 \le s < s' \le 4k+4$. Then, all vertices $y_i, x_i$ with $1 \le i < s$ or $s' < i \le n$ (that are still white) can be forced by a vertex other than the chosen vertices.
\end{lemma}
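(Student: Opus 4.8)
The plan is to drive all of the forcing from a single fully blue outer block that is guaranteed to avoid the window $[s,s']$, and to let a blue ``front'' propagate around the cycle in both directions, halting just before the window from each side. First I would observe that the hypotheses $s' \le 4k+4$ and $n \ge 6k+6$ guarantee that the block $A_2 = \{y_{4k+5}, \dots, y_{6k+6}\}$ is a full run of $2k+2$ consecutive blue outer vertices lying entirely in the complement of the window; in particular neither chosen vertex (both of which have index at most $4k+4$) belongs to $A_2$. By Lemma \ref{ZF}, $A_2$ alone is already a zero forcing set, so the real content is not \emph{whether} the complement can be forced, but whether it can be forced \emph{without ever using $y_s,x_s,y_{s'}$ or $x_{s'}$ as a forcing vertex}; since every vertex of $[s,s']$ is excluded and every forcer I will use has index strictly outside $[s,s']$, this reduces to controlling which indices each force references.

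Next I would propagate two waves out of $A_2$. Going clockwise (increasing index, wrapping $y_n \to y_1$), I would use the ``down--across--back'' maneuver of Lemma \ref{lem:forcedownacrossandback} to advance the blue front one outer position at a time, forcing $y_j$ and $x_j$ for $j$ running from the vertex just after $A_2$ around the cycle up to and including $j=s-1$ (the intermediate range $[6k+7,n]$ being empty in the minimal case $n=6k+6$). Going counterclockwise (decreasing index) I would symmetrically force $y_j,x_j$ for $j=4k+4,4k+3,\dots$ down to $j=s'+1$, where along the way the already-blue outer vertices of $S$ require no forcing and only their spokes and the white $C_i$-vertices are forced. Because the window is the single arc $[s,s']$, the indices covered by the two waves, namely $[s'+1,4k+4]\cup[4k+5,6k+6]\cup[6k+7,n]\cup[1,s-1]$, are exactly the complement, so every still-white complement vertex is reached.

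The crucial bookkeeping, and the main obstacle, is to certify that each force in these two waves references only vertices strictly outside $[s,s']$. The only forces that reach a bounded distance ``behind'' the front are the inner forces $x_j \to x_{j\pm k}$ and the down--across--back maneuver, which looks back at most $2k$ indices. For the counterclockwise wave the deepest lookback, occurring when forcing the last vertex $x_{s'+1}$, reaches index $s'+1+2k \le 6k+5$, which by $n\ge 6k+6$ is a genuine complement index on the $A_2$ side and in particular exceeds $s'$; symmetrically, the clockwise wave's lookback near $x_{s-1}$ reaches index $s-1-2k$, which stays below $s$ (possibly wrapping to a high index already forced earlier by that same wave). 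Hence no lookback ever lands in $[s,s']$, so indices $s$ and $s'$ are never used as forcers and the two chosen vertices are avoided. This is precisely where $n\ge 6k+6$ is needed: it gives the complement enough room that the bounded-distance lookbacks of both waves remain inside the already-forced complement and never wrap across into the window. Collecting the two waves then forces every white complement vertex by a vertex other than the chosen ones, as claimed.
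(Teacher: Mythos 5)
Your overall strategy is the same as the paper's: both arguments observe that $n \ge 6k+6$ guarantees the full block $A_2=\{y_{4k+5},\dots,y_{6k+6}\}$ lies entirely outside the index window $[s,s']$, invoke Lemma~\ref{ZF} to conclude that $A_2$ alone is a zero forcing set, and then run the forcing out of $A_2$ in both directions around the cycle, halting each front just before the chosen indices. Your ``lookback'' bookkeeping --- checking that every force references only indices strictly outside $[s,s']$, the deepest references being $s'+1+2k\le 6k+5<n$ on one side and, after wrapping, an index at least $n-2k\ge 4k+6>s'$ on the other --- is exactly the content the paper leaves implicit in the phrases ``forced from the left'' and ``forced from the right,'' and that bookkeeping is correct.

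There is, however, one genuine flaw in the write-up: you cannot cite Lemma~\ref{lem:forcedownacrossandback} here. That lemma appears later and its proof opens by invoking Lemma~\ref{lem:alwaysforceuptoS} (as do Lemmas~\ref{lem:Sisa1-leakyFS} and~\ref{lem:forcefromright}, which it also uses), so your appeal to it is circular. Moreover, the ``down--across--back'' maneuver is not the mechanism that advances a blue front one position at a time; it is a special device (Case 3 of that lemma) for routing around a vertex that permanently remains white. What you actually need --- and what you in fact describe correctly in your bookkeeping paragraph, where the only deep references are the inner forces $x_{j\mp k}\to x_j$ --- is the elementary propagation underlying Lemmas~\ref{lem:gponezfs} and~\ref{ZF}: once a full block of outer vertices and their spokes is blue, $y_m$ forces $y_{m+1}$ and $x_{m+1-k}$ forces $x_{m+1}$, and symmetrically in the decreasing direction. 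Replacing the citation of Lemma~\ref{lem:forcedownacrossandback} by this mechanism repairs the proof completely; with that substitution your argument is sound and coincides with the paper's.
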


\begin{proof}

Note that since $n\ge 6k+6$, $P(n,k)$ contains at least two $A_i$, $A_1$ and $A_2$, each containing $2k+2$ vertices. Since $A_2$ is a zero forcing set (by Lemma \ref{ZF}), it will force clockwise and counterclockwise and clockwise. Hence, all vertices $y_i, x_i$  with $1\le i < s$ and $s' < i \le n$ (that are still white) can be forced. In particular, all vertices $y_i, x_i$  with $1\le i < s$ can be forced from the left. Similarly, all vertices $y_i, x_i$  with $s' < i \le 4k+4$ (that are still white), can  be forced from the right. 
\end{proof}

As an immediate application of the previous lemma, we can see that $S=\bigcup_{i} A_i$ is also a $1$-leaky forcing set.

\begin{lemma}\label{lem:Sisa1-leakyFS}
Let $S = \bigcup_{i} A_i$ in $P(n,k)$ and let $n\ge 6k+6$. Then $S$ is a 1-leaky forcing set of $P(n,k)$.
\end{lemma}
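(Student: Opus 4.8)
The plan is to invoke the two-sequence characterization of $1$-leaky forcing, Lemma~\ref{lem:twodifferentleaky}, with $\ell=1$: a set is a $1$-leaky forcing set precisely when it is a ($0$-leaky) zero forcing set and, in addition, every vertex outside the set can be forced by two different vertices along two different forcing sequences. The first requirement is already in hand, since Lemma~\ref{ZF} and the remark following it show that $S=\bigcup_i A_i$ is a zero forcing set. Thus the whole task reduces to producing, for each $v\notin S$, two distinct forcers.

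To do this I would fix a target $v\notin S$ and use the block-shift symmetry $y_i\mapsto y_{i+4k+4}$, $x_i\mapsto x_{i+4k+4}$, which permutes the complete blocks (and hence fixes $S$); since the proof of Lemma~\ref{lem:alwaysforceuptoS} only uses that some full block $A_j$ with $|A_j|=2k+2$ is an interior zero forcing set forcing in both directions, an analogue of that lemma is available for every block, so we may assume $v\in\blockb_1$, say $v\in\{y_s,x_s\}$. I would then apply Lemma~\ref{lem:alwaysforceuptoS} twice. In the first application I take $v$ (index $s$) to be the \emph{lower} of the two chosen (protected) vertices and an admissible vertex of larger index as the other; the lemma forces blue every vertex of index less than $s$, so the clockwise neighbor of $v$ (namely $y_{s-1}$ if $v=y_s$, or $x_{s-k}$ if $v=x_s$) is blue and all of its \emph{other} neighbors also have smaller index and are blue, whence this neighbor forces $v$. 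In the second application I take $v$ to be the \emph{higher} chosen vertex, which forces blue every vertex of index greater than $s$; now the counterclockwise neighbor of $v$ ($y_{s+1}$, respectively $x_{s+k}$) forces $v$. These two forcers are distinct because $k\ge 1$, and since $S$ is a zero forcing set each process extends to a full forcing sequence, producing the two sequences Lemma~\ref{lem:twodifferentleaky} demands.

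The main point requiring care — the only place the argument is more than bookkeeping — is the unique-white-neighbor condition at the final step: one must check that once Lemma~\ref{lem:alwaysforceuptoS} has colored one entire side of $v$ blue, the designated neighbor really does have $v$ as its \emph{only} remaining white neighbor, which is what licenses the force and uses the explicit adjacency of $P(n,k)$ (the neighbors of $y_s$ are $y_{s\pm1}$ and $x_s$; those of $x_s$ are $x_{s\pm k}$ and $y_s$). The secondary technical nuisance is the boundary indices $s\in\{1,4k+4\}$, where one of the two one-sided applications is not directly in the stated range $1\le s<s'\le 4k+4$; I would handle these by selecting the second protected vertex with wraparound (indices being taken modulo $n$), using that $n\ge 6k+6$ guarantees a full block of $2k+2$ consecutive outer vertices on each side of $v$ to drive the corresponding sweep.
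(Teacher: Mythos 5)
Your proposal is correct, but it follows a genuinely different route from the paper's. The paper proves Lemma \ref{lem:Sisa1-leakyFS} in one line: it reruns the proof of Lemma \ref{lem:alwaysforceuptoS} with $s=s'$, so that the single protected index plays the role of the leak; every vertex at any other index is then forced by a non-leak vertex, and the remaining vertex sharing the leak's index is forced afterwards by a neighbor all of whose other neighbors are already blue. That is, the paper argues leak-by-leak and never invokes Lemma \ref{lem:twodifferentleaky}. You instead invoke the two-sequence characterization (Lemma \ref{lem:twodifferentleaky} with $\ell=1$) and argue target-by-target, protecting the target itself in two one-sided applications of Lemma \ref{lem:alwaysforceuptoS} to manufacture a clockwise forcer and a counterclockwise forcer. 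Both routes run on the same engine (bidirectional forcing from a full $A_j$, Lemma \ref{ZF}), but yours matches the machinery the paper reserves for Theorems \ref{thm:gpone} and \ref{thm:gptwo} and scales more naturally to additional leaks, at the price of per-target boundary bookkeeping (your cases $s\in\{1,4k+4\}$), whereas the paper's direct argument is shorter because one sweep handles all targets simultaneously for a given leak. One caution: your assertion that the shift $y_i\mapsto y_{i+4k+4}$, $x_i\mapsto x_{i+4k+4}$ fixes $S$ is false whenever $4k+4$ does not divide $n$ (the truncated final block breaks the rotational symmetry; see $P(87,7)$ in Figure \ref{fig:GP2Leak}). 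Your fallback justification --- that the proof of Lemma \ref{lem:alwaysforceuptoS} only needs \emph{some} full $A_j$ away from the target's block, which $n\ge 6k+6$ guarantees --- is the correct repair, and it is the same caveat the paper itself addresses when it reindexes blocks in the proof of Theorem \ref{thm:gptwo}.
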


\begin{proof}
The proof follows the proof of Lemma \ref{lem:alwaysforceuptoS}, taking $s=s'$.
\end{proof}

For each choice of leak and target vertex, the next lemma provides one method of forcing the target vertex. The vertices that are forcing the target are vertices whose indices are not in-between the indices of the leak and the target.

\begin{lemma}\label{lem:forcefromright}
Let $S  = \bigcup_{i} A_i$ in $P(n,k)$ and let $n\geq 6k+6$. Choose any vertex as a leak $y_f$ or $x_f$ with $1 \le f \le 4k+4$, and choose a target $y_t \notin A_1$ (resp. $x_t$) with $1 \le t \le 4k + 4$. If $f \le t$, $y_t$ (resp. $x_t$) can be forced by $y_{t+1}$ (resp. $x_{t+k}$). If $t < f$, $y_t$ (resp. $x_t$) can be forced by $y_{t-1}$ (resp. $x_{t-k}$).
\end{lemma}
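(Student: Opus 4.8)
\textbf{Proof strategy for Lemma~\ref{lem:forcefromright}.}
The plan is to exhibit, for each case, a single explicit vertex that is forced to be blue before the target and that has the target as its unique remaining white neighbor at the moment of forcing. The key enabling fact is Lemma~\ref{lem:alwaysforceuptoS}: once we fix the leak at index $f$ and the target at index $t$, all vertices whose index lies outside the closed range between $f$ and $t$ can be forced blue (using the separate zero forcing set $A_2$, which exists because $n \ge 6k+6$). Thus, before we ever attempt to force the target, I may assume that every vertex $y_i, x_i$ with $i < \min(f,t)$ or $i > \max(f,t)$ is already blue.

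First I would treat the outer target $y_t$. Suppose $f \le t$; by the remark above, $y_{t+1}$ (whose index exceeds $t = \max(f,t)$) is already blue. Its neighbors are $y_t$, $y_{t+2}$, and $x_{t+1}$; all three of $y_{t+2}$, $x_{t+1}$ are either already blue (both have index $> t$, hence outside the range) so $y_t$ is the unique white neighbor of $y_{t+1}$, which therefore forces it. The case $t < f$ is symmetric using $y_{t-1}$, whose index is below $\min(f,t) = t$, so it and its other neighbors $y_{t-2}, x_{t-1}$ are already blue, leaving $y_t$ as its only white neighbor. The same range argument handles $x_t$: when $f \le t$ the forcing vertex is $x_{t+k}$, whose index exceeds $t$; its neighbors are $x_t$, $x_{t+2k}$, and $y_{t+k}$, and the latter two have index $> t$ and are therefore blue, so $x_{t+k}$ forces $x_t$. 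When $t < f$ one uses $x_{t-k}$ symmetrically.

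The one point requiring care, and the main obstacle, is to verify that the chosen forcing vertex is genuinely distinct from the leak and that its \emph{other} neighbors really do fall outside the index range covered by Lemma~\ref{lem:alwaysforceuptoS}; this is where the hypotheses $n \ge 6k+6$ and the index bounds on $f, t$ enter. For the outer forces this is immediate since $y_{t\pm 1}$ and their ancillary neighbors shift the index by $1$ or $2$ past the target. For the inner forces the shift is by $k$ (to reach $x_{t\pm k}$) and by $2k$ (for its inner neighbor $x_{t\pm 2k}$), so I would confirm that these indices stay within the cyclic window on which $A_2$ operates and do not wrap around to collide with the leak; the condition $n \ge 6k+6$ guarantees enough room for the block $A_2$ to do its forcing without interference. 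Since the leak $y_f$ or $x_f$ has index $f$ strictly between (or equal to) the endpoints, it is never the forcing vertex, so the force is legal under the leaky color change rule.
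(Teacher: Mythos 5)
Your proposal is correct and follows essentially the same route as the paper's proof: invoke Lemma~\ref{lem:alwaysforceuptoS} to color all vertices with indices outside the range between $f$ and $t$, then observe that $y_{t\pm1}$ (resp.\ $x_{t\pm k}$) has the target as its unique white neighbor. In fact your write-up is more careful than the paper's three-sentence argument, since you explicitly list the other neighbors ($y_{t+2}, x_{t+1}$, resp.\ $x_{t+2k}, y_{t+k}$) and check that $n \ge 6k+6$ prevents their indices from wrapping around into the white range.
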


\begin{proof}
Assume that $f \le t$.
By Lemma \ref{lem:alwaysforceuptoS} all vertices $y_i$, $x_i$, with $1 \le i <f$ and $t < i \le n$ can be forced by a vertex other than the chosen vertices. Then, since $f \le t$, $y_t$ (resp. $x_t$) will be forced by $y_{t+1}$ (resp. $x_{t+k}$). 

If $t < f$, the same proof applies where $y_t$ (resp. $x_t$) will be forced by $y_{t-1}$ (resp. $x_{t-k}$).
\end{proof}

The next lemma addresses how to force some of the $x_i$ vertices that are between the target and leak. Note that in the result, $s$ and $s'$ are interchangeable as the leak and the target. 

\begin{lemma}\label{lem:overcomebykminusone}
Let $S = \bigcup_{i} A_i$ in $P(n,k)$ and let $n\geq 6k+6$. Choose two distinct vertices, say $y_s$ or $x_s$ and either $y_{s'}$ or $x_{s'}$ with $1 \le s < s' \le 4k+5$. Suppose all vertices $y_i$ and $x_i$ with $1\le i < s$ or $i > s'$ are blue. Then, all vertices $x_{s+1}, x_{s+2}, \ldots, x_{s+k-1}$ and $x_{s'-1}, x_{s'-2}, \ldots, x_{s'-k+1}$ (that are not already blue) can still be forced to be blue without using the chosen vertices (e.g., either $y_s$ or $x_s$ nor either $y_{s'}$ or $x_{s'}$) to force.
\end{lemma}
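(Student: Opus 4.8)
The plan is to force each target inner vertex directly, in a single step, using the inner-cycle neighbor that reaches into the already-blue region. The key structural fact is that in $P(n,k)$ every inner vertex $x_i$ has exactly three neighbors: its spoke $y_i$ and its two inner-cycle neighbors $x_{i-k}$ and $x_{i+k}$. So to force a white $x_t$ it suffices to exhibit a blue neighbor whose other two neighbors are also blue, and to check that this neighbor is neither $x_s,y_s$ nor $x_{s'},y_{s'}$.

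First I would handle the left-hand targets $x_{s+1}, \dots, x_{s+k-1}$. Fix $j$ with $1 \le j \le k-1$ and consider the candidate forcer $x_{s+j-k}$. Its index satisfies $s+1-k \le s+j-k \le s-1$, so modulo $n$ it lies strictly before $s$ (or, when $s+j-k<1$, it wraps to an index exceeding $s'$); either way the hypothesis places it in the blue region, so $x_{s+j-k}$ is blue. The remaining two neighbors of $x_{s+j-k}$ are the spoke $y_{s+j-k}$ and the inner neighbor $x_{s+j-2k}$; both of these indices are again less than $s$ or wrap past $s'$, and here the bound $n \ge 6k+6$ is exactly what guarantees that any such wrapped index lands beyond $s'$ rather than inside the window between $s$ and $s'$ (for instance the smallest relevant index $s+j-2k$ wraps to at least $4k+8 > 4k+5 \ge s'$). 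Consequently $x_{s+j}$ is the unique white neighbor of the blue vertex $x_{s+j-k}$, which therefore forces it. Since $x_{s+j-k}$ has index different from both $s$ and $s'$, this never uses a chosen vertex.

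Symmetrically, for the right-hand targets $x_{s'-1}, \dots, x_{s'-k+1}$, I would force $x_{s'-j}$ (for $1 \le j \le k-1$) by the inner neighbor $x_{s'-j+k}$, whose index lies in $[s'+1, s'+k-1]$ and hence in the blue region on the far side; its other two neighbors $y_{s'-j+k}$ and $x_{s'-j+2k}$ have indices at most $6k+4 < n$, so they too lie in the blue range exceeding $s'$ and are blue. Thus $x_{s'-j}$ is the unique white neighbor of $x_{s'-j+k}$ and gets forced, again without using a vertex of index $s$ or $s'$.

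Finally, I would note that no ordering among these forces is required: each forcer already has both of its other neighbors blue from the outset (they all lie outside the window), so every one of the claimed forces is available simultaneously and independently, and overlaps between the two target ranges cause no difficulty. The only delicate point, and the place where the hypotheses $n \ge 6k+6$ and $s' \le 4k+5$ are genuinely used, is the modular bookkeeping confirming that whenever a forcer's neighbor index wraps past $n$ it re-enters the blue region beyond $s'$ and not the white window; this reduces to a short check on the extreme indices, as indicated above.
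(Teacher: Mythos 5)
Your proposal is correct and follows essentially the same route as the paper's proof: the paper likewise forces each $x_{s+j}$ via the blue vertex $x_{s+j-k}$ (with $s-k+1 \le s+j-k \le s-1$) and each $x_{s'-j}$ via $x_{s'-j+k}$ (with $s'+1 \le s'-j+k \le s'+k-1$). Your write-up simply makes explicit the modular bookkeeping and the check that each forcer's remaining neighbors are blue, details the paper leaves implicit in its two-sentence argument.
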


\begin{proof}
By the hypothesis, each $x_i$ with $s-k+1 \le i \le s-1$ can force $x_{i+k}$. Similarly, $x_i$ with $s' + 1 \le  j \le s'+k-1$ can force $x_{j-k}$.   
\end{proof}

Our final lemma shows how the blue vertices from Lemma \ref{lem:overcomebykminusone} can force vertices in blocks $C_1$ and $D_1$. Note that while $t$ represents the target vertex, $s$ is a vertex who remains white because it would have been forced by the vertex with the leak.

\begin{lemma}\label{lem:forcedownacrossandback}
Let $S = \bigcup_{i} A_i$ in $P(n,k)$ be an initially blue set and let $7\leq k\leq \frac{n-1}{2}$ and $n\ge 6k+6$. Choose $t$ such that $2k+3 \le t \le 4k+5$. Assume there is one vertex $x_s$ with $1 \le s \le 4k+4$ that always remains white (or if $s=t$, is only forced by $x_{t-k}$) .
\begin{enumerate}
\item If $t - s\le 0$ then $y_{t}$ and $x_{t}$ can be forced by $y_{t-1}$ and $x_{t-k}$ respectively.
\item If $1 \le t-s <k$, then  $x_{t}$ and $y_{t}$ can be forced by $x_{t-k}$ and $x_{t}$ respectively.
\item If $t-s \ge k$, then, there exist two adjacent vertices $x_a, x_{a+1}$ (among $x_{t-k+2}$, \ldots, $x_{t-1}$) that can force $y_a$ and $y_{a+1}$. Thereafter, $y_{t}$ and $x_{t}$ can eventually be forced by $y_{t-1}$ and $y_{t}$ respectively.
\end{enumerate}
\end{lemma}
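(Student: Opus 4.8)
The plan is to prove all three parts by one uniform strategy: first pin down, using the earlier lemmas, exactly which vertices are already blue at the moment we wish to force the target, and then read off the claimed forcing moves directly from the adjacency structure $y_i \sim y_{i-1},y_{i+1},x_i$ and $x_i \sim y_i,x_{i-k},x_{i+k}$. For the background coloring I would invoke Lemma \ref{lem:alwaysforceuptoS} and Lemma \ref{lem:overcomebykminusone} with the stubborn white vertex $x_s$ and the target index $t$ as the two distinguished vertices. This makes every vertex whose index lies outside the interval bounded by $s$ and $t$ blue, and it makes the inner vertices immediately flanking $x_s$ (namely $x_{s+1},\dots,x_{s+k-1}$, and symmetrically $x_{t-k+1},\dots,x_{t-1}$ at the target end) blue as well. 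The hypotheses $n\ge 6k+6$ and $2k+3\le t\le 4k+5$ are precisely what guarantee these lemmas apply and that indices such as $x_{t-2k}$ and $x_{t+k}$ are legitimate vertices lying in the already-blue region.

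First I would dispatch Case 1 ($t-s\le 0$, so $x_s$ lies at or beyond the target). Here the background makes everything of index less than $t$ blue, so both moves are immediate: $y_{t-1}$ has blue neighbors $y_{t-2}$ and $x_{t-1}$, leaving $y_t$ as its unique white neighbor, and $x_{t-k}$ has blue neighbors $y_{t-k}$ and $x_{t-2k}$, leaving $x_t$ as its unique white neighbor. Case 2 ($1\le t-s<k$) is handled similarly but forces $x_t$ before $y_t$: since $t-s<k$ gives $s>t-k$, the vertices $x_{t-k},x_{t-2k},y_{t-k}$ all have index at most $t-k<s$ and are therefore blue, so $x_{t-k}$ forces $x_t$; once $x_t$ is blue, its remaining neighbors $x_{t-k}$ and $x_{t+k}$ (the latter blue because $t+k<n$ lies beyond the target) are blue, so $x_t$ forces $y_t$ along its spoke. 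The reason for forcing $x_t$ first is that $x_s$ may coincide with $x_{t-1}$, which would block the direct force $y_{t-1}\to y_t$ used in Case 1.

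The heart of the lemma is Case 3 ($t-s\ge k$), where $x_s$ lies far enough behind the target that its whiteness propagates inward: the inner vertex $x_{s+k}$ cannot force its spoke $y_{s+k}$ because its inner-neighbor $x_s$ stays white. My approach is the ``down, across, and back'' route suggested by the statement. The inner vertices $x_{t-k+1},\dots,x_{t-1}$ are blue, and each such $x_a$ can force its spoke $y_a$ provided both inner-neighbors $x_{a-k}$ and $x_{a+k}$ are blue; the forward neighbor $x_{a+k}$ has index exceeding $t$ and is blue, while the backward neighbor $x_{a-k}$ is blue unless $a-k=s$. Thus among the consecutive candidates $x_{t-k+2},\dots,x_{t-1}$ only a bounded number of spokes are obstructed, and I would use a short counting argument to produce two adjacent survivors $x_a,x_{a+1}$ (with $a\ge t-k+2$) forcing $y_a,y_{a+1}$. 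With this blue pair seated on the outer cycle and the intervening spokes $x_{a+1},\dots,x_{t-1}$ already blue, I propagate along the outer cycle $y_{a+1}\to y_{a+2}\to\cdots\to y_{t-1}\to y_t$, and finally $y_t$ forces $x_t$ down its spoke (its other outer-neighbor $y_{t+1}$ being blue).

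The main obstacle is exactly this counting step in Case 3: one must guarantee that the obstructed spokes cannot destroy every adjacent pair among the consecutive candidates. This is where $k\ge 7$ enters and cannot be dropped, since a path on too few vertices can be rendered free of adjacent survivors by deleting only a couple of its vertices, whereas $k-2$ consecutive indices always retain an adjacent good pair once $k$ is large enough (the independent-set bound $\lfloor m/2\rfloor$ for a path $P_m$ makes this precise). Beyond this crux, the remaining work is boundary bookkeeping that I would verify carefully but expect to be routine given $2k+3\le t\le 4k+5$ and $n\ge 6k+6$: confirming that $a$ can always be chosen so that every spoke used in the outer propagation lies in the already-blue range $\{t-k+1,\dots,t-1\}$, and that indices such as $t-2k\ge 1$ and $t+k<n$ keep all invoked vertices legitimate.
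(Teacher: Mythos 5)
Your Cases 1 and 2 are correct and essentially identical to the paper's treatment (the paper packages the same neighbor checks as citations of Lemma \ref{lem:forcefromright}). The genuine gap is in Case 3, at the step where you assert that for a candidate $x_a$ the inner neighbor $x_{a-k}$ ``is blue unless $a-k=s$.'' This is not justified by the background you invoke, and it is false in general. Lemmas \ref{lem:alwaysforceuptoS} and \ref{lem:overcomebykminusone} only certify blueness for indices below $s$, above $t$, and for the flanking inner vertices $x_{s+1},\dots,x_{s+k-1}$ and $x_{t-k+1},\dots,x_{t-1}$. The inner neighbors of your candidates have indices in $[t-2k+2,\,t-k-1]$, and once $t-s\ge 2k+1$ this interval contains $s+k$ (and possibly larger indices), which none of your cited lemmas cover. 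Moreover $x_{s+k}$ genuinely can remain white at this stage: its inner neighbor $x_s$ is white by hypothesis, and when $s+k\ge 2k+3$ its spoke $y_{s+k}$ lies in $C_1$ and is initially white. In principle whiteness can cascade outward from $x_s$ in steps of $k$, so ``only a bounded number of spokes are obstructed'' is precisely what must be proved; your counting argument cannot start until the obstruction set is pinned down.

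The paper closes exactly this hole with two forcing layers that your outline omits: first, every $x_i$ with $s<i\le 2k+1$ is forced by $y_i\in A_1$; second, every $x_i$ with $2k+2\le i\le 3k+1$ except $x_{s+k}$ is forced by $x_{i-k}$. This certifies that among low-index inner vertices the only possible white ones are $x_s$ and $x_{s+k}$. The paper then restricts to the four candidates $x_{t-k+1},\dots,x_{t-k+4}$ precisely because their inner neighbors $x_{t-2k+1},\dots,x_{t-2k+4}$ have indices at most $2k+9\le 3k+2$, i.e., they land inside this directly certified region; the inequality $2k+9\le 3k+2$ is where $k\ge 7$ is actually used --- not in a path-survivor/independent-set count over $k-2$ candidates, as you suggest (your count, with an unspecified number of obstructions, does not by itself single out $k\ge 7$). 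Since $s$ and $s+k$ differ by $k>3$, at most one of the four consecutive inner neighbors can be white, so some adjacent pair $x_a,x_{a+1}$ survives; from there your outer-cycle propagation to $y_t$ and the final force $y_t\to x_t$ agree with the paper. To repair your write-up, insert the two fill-in layers (or an equivalent argument halting the cascade of white vertices) before running the pair-selection step, and select the candidates so that their inner neighbors lie in the certified region.
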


\begin{proof} Since $S$ is a zero forcing set, by Lemma \ref{lem:alwaysforceuptoS}, all vertices $y_i \notin S$ and $x_i$, with $1 \le i <\min\{s,t\}$ and $i >\max\{s,t\}$ can be forced without using any of the chosen vertices indexed by $s$ and $t$.

{\bf Case 1:} Follows immediately after the application of  Lemma \ref{lem:forcefromright}.

{\bf Case 2:} Note that $x_{t}$ can be forced by $x_{t-k}$ by Lemma \ref{lem:forcefromright} and then $x_{t}$ can force $y_{t}$.

{\bf Case 3:} By Lemma \ref{lem:overcomebykminusone}, 
 all vertices $x_{s+1}, x_{s+2}, \ldots, x_{s+k-1}$ and $x_{t-1}, x_{t-2}, \ldots, x_{t-k+1}$ (that are not already blue) can be forced without using the chosen vertices indexed by $s$ and $t$ to force.
 
 In any case, the vertices $x_i$, with $s < i \le 2k+1$, are forced by $y_i$. Hence, all vertices $x_{2k+2}, x_{2k+3}, \dots, x_{3k+1}$ (except the vertex $x_{s+k}$) can be forced by $x_{k+2}, x_{k+3}, \dots, x_{2k+1}$ respectively.

Also, the vertices $x_{t-k+1}, x_{t-k+2}, x_{t-k+3}$, and $x_{t-k+4}$ can be forced blue by $x_{t+1}, x_{t+2}, x_{t+3}$ and $x_{t+4}$ respectively.  Since $t \le 4k+5$,
the indices of the vertices $x_{t-k+1}, x_{t-k+2}, x_{t-k+3}$ and $x_{t-k+4}$ are at most $3k+6, 3k+7, 3k+8, 3k+9$ respectively. The vertices $x_{t-k+1}, x_{t-k+2}, x_{t-k+3}$, and $x_{t-k+4}$ are adjacent to $x_{t-2k+1 }, x_{t-2k+2}, x_{t-2k+3}$, and $x_{t-2k+4}$, the indices of which are at most $2k+6, 2k+7, 2k+8, 2k+9$. Since $k\ge 7$, these are less than or equal to $3k+2$. Since all but one of the vertices $x_{t-k+1}, x_{t-k+2}, x_{t-k+3}$, and $x_{t-k+4}$ have two blue neighbors, it follows that one pair among $\{ x_{t-k+1}, x_{t-k+2}\}$, $\{ x_{t-k+2}, x_{t-k+3}\}$, $\{ x_{t-k+3}, x_{t-k+4}\}$ has two blue neighbors. Call these vertices $x_a, x_{a+1}$.

The vertices $x_a, x_{a+1}$ can force $y_a, y_{a+1}$ respectively. Thereafter, since all $x_i$ with $i \ge a$ are blue, $y_{a+1}$ can force $y_{a+2}$, and inductively $y_{a+\ell}$ can force $y_{a+\ell+1}$ until $y_{t}$. Hence, $y_{t}$ will be forced by $y_{t-1}$. From there since $y_{t+1}$ is colored blue by Lemma \ref{lem:alwaysforceuptoS} and $y_{t-1}$ is blue, so $y_{t}$ forces $x_{t}$.
\end{proof}

We emphasize that the technique within the proof of Case 3 in Lemma \ref{lem:forcedownacrossandback} requires that $k \ge 7$ which is why our main result for this subsection, Theorem \ref{thm:gptwo} also requires $k \ge 7$ as a hypothesis. We will now state and prove our 2-leaky upper bound.

\begin{thm}  \label{thm:gptwo}
Let $7\leq k\leq \frac{n-1}{2}$ and $n\ge 10k+10$. Then $\Ztwo{P(n,k)}\leq \lceil \frac{n}{4k+4}\rceil (2k+2)$. 
\end{thm}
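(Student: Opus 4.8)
The plan is to show that the set $S = \bigcup_i A_i$ is a 2-leaky forcing set and to read the bound off its size. For the size, the sets $A_i$ are pairwise disjoint, there are $\lceil \frac{n}{4k+4}\rceil$ of them, and each satisfies $|A_i| \le 2k+2$; hence $|S| = \sum_i |A_i| \le \lceil \frac{n}{4k+4}\rceil(2k+2)$, matching the claimed bound. Since $S$ consists only of the outer vertices lying in the sets $A_i$, it remains to certify that every inner vertex and every outer vertex in some $C_i$ can be forced, no matter where two leaks are placed.

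To do this I would apply Lemma \ref{lem:twodifferentleaky} with $\ell = 2$. As $S$ is already a 1-leaky forcing set by Lemma \ref{lem:Sisa1-leakyFS}, it suffices to fix one leak and exhibit, for each target $v \notin S$, two forcing sequences that force $v$ through two distinct vertices: whichever vertex the adversary chooses as the second leak can spoil at most one of these two routes. Because $S$ is invariant under rotating $P(n,k)$ by $4k+4$ positions, I would first rotate the fixed leak into the first block, so that it sits at $y_f$ or $x_f$ with $1 \le f \le 4k+4$. The hypothesis $n \ge 10k+10$, which is stronger than the $n \ge 6k+6$ needed for the auxiliary lemmas, guarantees at least three blocks; this keeps the argument local, ensuring that the forcing waves used to build the second route reach the target without wrapping around the cycle and colliding with the obstruction near the first block.

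When the target $t$ lies far from the leak, the two routes are immediate: by Lemma \ref{ZF} each full $A_i$ is a zero forcing set whose wave propagates both clockwise and counterclockwise, and a single leak stalls only one local wave, so Lemmas \ref{lem:alwaysforceuptoS} and \ref{lem:forcefromright} supply two distinct forcers, one from each direction. The delicate regime is when the leak and the target both sit in or near the first block. Here the leak leaves behind a white vertex $x_s$, the vertex that the leaked vertex would otherwise have forced, and one must force $t$ around this obstruction. Lemma \ref{lem:forcedownacrossandback}, which in turn relies on Lemma \ref{lem:overcomebykminusone}, produces one route through the obstructed side, splitting on the value of $t - s$, while forcing from the side of the cycle opposite the leak supplies a second, independent route; combining them yields two genuinely distinct forcers for $t$.

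I expect the main obstacle to be Case 3 of Lemma \ref{lem:forcedownacrossandback}, the regime $t - s \ge k$, where the obstruction forces a detour that drops to the inner cycle, crosses along the chords $x_i x_{i+k}$, and climbs back up. This is the maneuver that needs $k \ge 7$ to guarantee a usable adjacent pair $x_a, x_{a+1}$ with two blue neighbors, and the bookkeeping is the crux: one must simultaneously track the leak-induced white vertex $x_s$ and the target $t$, confirm that enough inner vertices between them turn blue to launch the detour, and check that the two resulting routes end in different forcers. Once every target in the first block and its neighborhood is handled, the rotational reduction finishes the verification, and Lemma \ref{lem:twodifferentleaky} gives that $S$ is a 2-leaky forcing set.
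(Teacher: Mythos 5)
Your proposal is correct and follows essentially the same route as the paper's proof: the same set $S = \bigcup_i A_i$, the same reduction via Lemma \ref{lem:twodifferentleaky} and Lemma \ref{lem:Sisa1-leakyFS} to exhibiting two distinct forcers for each target given one fixed leak, the same rotational reindexing of the leak into the first block, and the same reliance on Lemmas \ref{ZF}, \ref{lem:alwaysforceuptoS}, \ref{lem:forcefromright}, \ref{lem:overcomebykminusone}, and \ref{lem:forcedownacrossandback}, with the $k \ge 7$ requirement correctly located in Case 3 of the last. The paper simply carries out the case analysis you outline in full detail (twelve same-block leak/target combinations plus the different-block cases, where the three full blocks guaranteed by $n \ge 10k+10$ are used exactly as you indicate).
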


\begin{proof} Let $S = \bigcup_{i} A_i$ in $P(n,k)$ and {observe that $|S| \le \lceil \frac{n}{4k+4}\rceil (2k+2)$}.
By Lemma \ref{lem:Sisa1-leakyFS}, $S$ is a 1-leaky forcing set. Using Lemma \ref{lem:twodifferentleaky}, we will prove that $S$ is further a 2-leaky forcing set by showing that for any choice of a single leak, every other vertex not in $S$ can be forced in two different ways. We will refer to this vertex as the \textit{target}. For the remainder of the proof, we will index the leak as $x_f$ or $y_f$ and the target as $x_t$ or $y_t$.

Throughout the remainder of the proof, we will make use of the (near) symmetry of $P(n,k)$ by shifting and/or reversing the indices of the vertices and blocks. For instance, we may designate the set $A_2 \cup B_2 \cup C_1 \cup D_1$ as the first block by reindexing the vertices $6k+6 \to 1, 4k+5 \to 2k+2$, etc. When we do so, we will use the notation $\blockb'_i$ to emphasize the new block $i$. One may be concerned that doing so will move the final incomplete block to another location and disrupt the indexing of the vertices and the blocks. However, if there is a single leak within any block, then 
Lemma \ref{lem:alwaysforceuptoS} guarantees that all vertices up to (but not including) the leak and the target can be forced blue via the forcing sets $A_i$ from other blocks. Further, if $A_{\lceil \frac{n}{4k+4} \rceil}$ is truncated, it will be adjacent to $A_1$, thereby effectively making a longer string of initially blue vertices than necessary. Hence, any incomplete block will never increase the spacing of the white vertices in that block and the arguments at hand will not be affected.

For our first 12 cases, let us assume that the leak ($x_f$ or $y_f$) is in the same block as the target ($x_t$ or $y_t$). We will discuss the case where they are in different blocks later.

 Without loss of generality, we will assume that the leak and the target are in $\blockb_1$; 
that is,
$1 \le f \le 4k+4$ and $1 \le t \le 4k+4$. 
By Lemma \ref{lem:alwaysforceuptoS} all $y_i \notin S$ and $x_i$ for $1 \le i <f$ and $i >t$ can be forced. 
It remains to cover the 12 possible combinations of the leak (in $A_1, B_1, C_1$, or $D_1$) and the target ($B_1, C_1$, or $D_1$).

{\bf Case 1:} (Leak: $A_1$. Target: $B_1$.): \label{target:AB}
Let $y_f \in A_1$ and $x_{t} \in B_1$.
Assume by symmetry that $1 \le f \le t \le 2k+2$ (by relabeling and taking $\blockb_1' = A_1 \cup B_1 \cup C_{\lceil \frac{n}{4k+4}\rceil} \cup D_{\lceil \frac{n}{4k+4}\rceil}$, if necessary).
By Lemma \ref{lem:forcefromright}, $x_{t}$ can be forced by $x_{t+k}$. 
If $f \ne t$, then $x_{t}$ can also be forced by $y_{t}$. Alternatively, if $f=t$, then  $x_{t}$ can also be forced by $x_{t-k}$.

{\bf Case 2:}  (Leak: $A_1$. Target: $C_1$.): \label{target:AC}

Let $y_f \in A_1$ and  $y_{t} \in C_1$.
Then, by Lemma \ref{lem:forcefromright}, $y_{t}$ can be forced by $y_{t+1}$. By applying Lemma \ref{lem:forcedownacrossandback} with $s = f$, $y_{t}$ can also be forced by $y_{t-1}$ or $x_{t}$ (which one is determined by the difference $t-s$ as in Lemma \ref{lem:forcedownacrossandback}).

{\bf Case 3:} (Leak: $A_1$. Target: $D_1$.): \label{target:AD}

Let $y_f \in A_1$ and  $x_{t} \in D_1$.
Assume by symmetry that  $k+2 \le f \le 2k+2$ and $2k+3 \le t \le 4k+4$. 
Then by Lemma \ref{lem:forcefromright}, $x_{t}$ will be forced by $x_{t+k}$. On the other hand, by applying Lemma \ref{lem:forcedownacrossandback} with $s=f$, $x_{t}$ can also be forced by either $x_{t-k}$ or $y_{t}$ (which one is determined by the difference $t-s$ as in Lemma \ref{lem:forcedownacrossandback}).

{\bf Case 4:} (Leak: $B_1$. Target: $B_1$.): \label{target:BB}
Let $x_f \in B_1$ and  $x_{t} \in B_1$.
Assume by symmetry that $1 \le f \le t \le 2k+2$ (by relabeling as in Case 1). Then, by Lemma \ref{lem:forcefromright}, $x_t$ can be forced by $x_{t+k}$. 
It remains to show that $x_t$ can also be forced by $y_t$. 
If $t \ne 1, 2k+2$, this occurs automatically. If $f = t = 1$, then both $y_n$  (by Lemma \ref{lem:alwaysforceuptoS})  and $y_1$ (by hypothesis) will be blue, and then $y_1$ can force $x_1$. Similarly, for $t = 2k+2$, both 
$y_{2k+1}$ (by hypothesis) and $y_{2k+3}$ (by Lemma \ref{lem:alwaysforceuptoS}) will be blue; thus,
$y_{2k+2}$ can force $x_{2k+2}$.

{\bf Case 5:} (Leak: $B_1$. Target: $C_1$.): \label{target:BC}
Let $x_f \in B_1$ and $y_{t} \in C_1$. 
Note that by Lemma \ref{lem:forcefromright}, $y_{t}$ can be forced via $y_{t+1}$. On the other hand, by applying Lemma \ref{lem:forcedownacrossandback} with $s=f+k$, $y_{t}$ can also be forced by either $y_{t-1}$ or $x_{t}$ (which one is determined by the difference $t-s$ as in Lemma \ref{lem:forcedownacrossandback}).

{\bf Case 6:} (Leak: $B_1$. Target: $D_1$.):\label{target:BD}
Let $x_f \in B_1$ and $x_t\in D_1$. 
Note that by Lemma \ref{lem:forcefromright}, $x_{t}$ can be forced via $x_{t+k}$. On the other hand, by applying Lemma \ref{lem:forcedownacrossandback} with $s = f+k$, $x_{t}$ can also be forced by $y_{t}$ or $x_{t-k}$ (which one is determined by the difference $t-s$ as in Lemma \ref{lem:forcedownacrossandback}).

{\bf Case 7:} (Leak: $C_1$. Target: $B_1$.): \label{target:CB}
Let $y_f \in C_1$ and  $x_{t} \in B_1$. By Lemma \ref{lem:alwaysforceuptoS}, $x_{t}$ can be forced by $x_{t-k}$.
If $t \ne 1, 2k+2$, then $y_t$ will force $x_t$ naturally. If $t=1$, Lemma \ref{lem:alwaysforceuptoS} will force $y_n$, so $y_1 = y_t$ will force $x_1 = x_t$. 

If $t = 2k+2$, we proceed similarly to the proof of Lemma \ref{lem:forcedownacrossandback}, Case 3. By Lemma
\ref{lem:overcomebykminusone}, all vertices $x_{2k+3}, \ldots, x_{3k+1}$ and $x_{f-k+1}, \ldots, x_{f-1}$ can be forced. Then, $x_f$ can be forced by $x_{f+k}$.
Necessarily, $f \le 4k+4$, so $f-k+1$ is at most $3k+5$. Since $k \ge 7$, $x_{3k}$  and $x_{3k+1}$ can force $y_{3k}$ and $y_{3k+1}$
respectively (as $x_{2k}$, $x_{2k+1}$, $x_{4k}$, and $x_{4k+1}$ must be blue).
Thereafter,  $y_{3k}$ can force $y_{3k-1}$, and inductively, $y_{i}$ can force $y_{i-1}$ from $i=3k$ until $i=2k+3$. Finally, $y_{2k+2}=y_t$ can force $x_{2k+2}=x_t$.

{\bf Case 8:} (Leak: $C_1$. Target: $C_1$.):\label{target:CC}
Let $y_f \in C_1$ and $y_{t} \in C_1$. Assume by symmetry that $2k+3 \le f \le t \le 4k+4$ 
(by relabeling and taking $\blockb_1' = A_2 \cup B_2 \cup C_{1} \cup D_{1}$, if necessary).
Then by Lemma \ref{lem:forcefromright}, $y_{t}$ can be forced by $y_{t+1}$. On the other hand, by applying Lemma \ref{lem:forcedownacrossandback} with $s=f+k$, $y_{t}$ can also be forced by either $y_{t-1}$ or $x_{t}$ (which one is determined by the difference $t-s$ as in Lemma \ref{lem:forcedownacrossandback}).

{\bf Case 9:} (Leak: $C_1$. Target: $D_1$.): \label{target:CD}
Let $y_f \in C_1$ and $x_{t} \in D_1$. Assume by symmetry that $2k+3 \le f \le t \le 4k+4$ (by relabeling as in Case 8).
~Then by Lemma \ref{lem:forcefromright}, $x_{t}$ can be forced by $x_{t+k}$. On the other hand, by applying Lemma \ref{lem:forcedownacrossandback} with $s=f+k$, $x_{t}$ can also be forced by either $x_{t-k}$ or $y_{t}$ (which one is determined by the difference $t-s$ as in Lemma \ref{lem:forcedownacrossandback}).
  
{\bf Case 10:} (Leak: $D_1$. Target: $B_1$.): \label{target:DB}
Let $x_{f} \in D_1$ and $x_t \in B_1$. By Lemma \ref{lem:forcefromright}, $x_t$ can be forced by $x_{t-k}$. If $2 \le t \le 2k+1$ then $x_t$ can also be forced naturally by $y_t \in A_1$. If $t=1$, Lemma \ref{lem:alwaysforceuptoS} will force $y_n$, so $y_1 (= y_t)$ will force $x_1 (= x_t)$.

If $t = 2k+2$, we proceed similarly to the proof of Lemma \ref{lem:forcedownacrossandback}, Case 3. By Lemma \ref{lem:alwaysforceuptoS},
all vertices $x_i$ and $y_i$ such that $1\leq i< t$ and $f<i\leq n$ (that are still white) can be forced. By Lemma
\ref{lem:overcomebykminusone}, all vertices $x_{2k+3}, \ldots, x_{3k+1}$ and $x_{f-k+1}, \ldots, x_{f-1}$ can be forced. Then, $x_f$ can be forced by $x_{f+k}$.
Necessarily, $f \le 4k+4$, so $f-k+1$ is at most $3k+5$. Since $k \ge 7$, $x_{3k}$  and $x_{3k+1}$ can force $y_{3k}$ and $y_{3k+1}$
respectively (as $x_{2k}$, $x_{2k+1}$, $x_{4k}$ and $x_{4k+1}$ must be blue).
Thereafter,  $y_{3k}$ can force $y_{3k-1}$, and inductively, $y_{i}$ can force $y_{i-1}$ from $i=3k$ until $i=2k+3$. Finally, $y_{2k+2}=y_t$ can force $x_{2k+2}=x_t$.

{\bf Case 11:} (Leak: $D_1$. Target: $C_1$.): \label{target:DC}
Let $x_f \in D_f$ and $y_{t} \in C_1$. Assume by symmetry that $2k+3 \le f \le t \le 4k+4$ (by relabeling as in Case 8). ~Then by Lemma \ref{lem:forcefromright}, $y_{t}$ can be forced by $y_{t+1}$. On the other hand, by applying Lemma \ref{lem:forcedownacrossandback} with $s = f + k$, $y_{t}$ can also be forced by $y_{t-1}$ or $x_{t}$ (which one is determined by the difference $t-s$ as in Lemma \ref{lem:forcedownacrossandback}).

{\bf Case 12:} (Leak: $D_1$. Target: $D_1$.): \label{target:DD}
Let $x_f \in D_1$ and $x_{t} \in D_1$. Assume by symmetry that $2k+3 \le f \le t \le 4k+4$ (by relabeling as in Case 8).
Then by Lemma \ref{lem:forcefromright}, $x_{t}$, can be forced by $x_{t+k}$. On the other hand, by applying Lemma \ref{lem:forcedownacrossandback} with $s = f+k$, $x_{t}$ can also be forced by $x_{t-k}$ or $y_{t}$ (which one is determined by the difference $t-s$ as in Lemma \ref{lem:forcedownacrossandback}).

We now assume that the leak ($x_f$ or $y_f$) and the target ($x_t$ or $y_t$) are in different blocks.
The proof requires the existence of three full $A_i$ which is guaranteed as $n \ge 10k+10$.

 Let $\blockb_{f^*}$ and $\blockb_{t^*}$ denote the blocks containing the leak and target respectively.  If $f^* \ne t^* \pm1$ (i.e., the blocks are not adjacent), by Lemma $\ref{ZF}$, $A_{t^*-1}$,  $A_{t^*}$, and $A_{t^*+1}$ are each a zero forcing sets that can force all of $\blockb_{t^*}$ and from different directions. Hence, for the remainder of the proof, we will assume that $\blockb_{f^*}$ and $\blockb_{t^*}$ are adjacent.
 By symmetry, we will take $t^*=2$ and $f^*=1$ or $3$; this leaves the following three cases.

{\bf Case 13:} (Leak: $\blockb_1 \cup \blockb_3$, Target: $B_2$):

{\bf Case 13a:} (Leak: $\blockb_1 \cup \blockb_3$, Target: $B_2$ with $x_t \ne x_{4k+5}$ or $x_{6k+6}$):

The target $x_t$ can be forced directly by $y_t \in A_2$. Also, one of $\blockb_1 \cup \blockb_3$ does not have a leak. So by Lemma \ref{lem:forcefromright}, either $x_{t - k}$ or  $x_{t+k}$ can force $x_t$, depending on if the leak is in $\blockb_3$ or $\blockb_1$, respectively.

{\bf Case 13b:} (Leak: $\blockb_1$, Target: $B_2$ with $x_t = x_{4k+5}$):

By Lemma \ref{lem:forcefromright}, $x_{t}$ can be forced by $x_{t+k}$. On the other hand, by applying Lemma \ref{lem:forcedownacrossandback} with $s=f+k$ and $t = 4k+5$, $x_{t}$ can also be forced by either $x_{t-k}$ or $y_{t}$ (which one is determined by the difference $t-s$ as in Lemma \ref{lem:forcedownacrossandback}).

{\bf Case 13c:} (Leak: $\blockb_3$, Target: $B_2$ with $x_t = x_{4k+5}$):

By Lemma \ref{lem:alwaysforceuptoS}, $A_1$ can eventually force all of $\blockb_1$. Thereafter, $x_t$ can be forced by $y_{t}$. Alternatively, $x_t$ can be forced by $x_{t - k}$.

{\bf Case 13d:} (Leak: $\blockb_1$, Target: $B_2$ with $x_t = x_{6k+6}$):

$A_3$ can eventually force all of $\blockb_3$. Thereafter, by Lemma \ref{lem:alwaysforceuptoS}, all of $C_2 \cup D_2$ can be forced. Then, $x_t$ can be forced by $y_{t}$. Alternatively, $x_t$ can be forced by $x_{t + k}$.

{\bf Case 13e:} (Leak: $\blockb_3$, Target: $B_2$ with $x_t = x_{6k+6}$):

This case follows from symmetry upon relabeling the vertices so that $6k+6 \to 4k+5$ and $4k+5 \to 6k+6$. Hence, we define the new blocks $\blockb'_2 := A_2 \cup B_2 \cup C_1 \cup D_1$, $\blockb'_1 := A_3 \cup B_3 \cup C_2 \cup D_2$, and $\blockb'_0 := A_4 \cup B_4 \cup C_3 \cup D_3$.

If the leak is in $\blockb'_2$, apply Case 7 or 10 as appropriate. If the leak is in $\blockb'_1$, apply Case 13b. If the leak is in $\blockb'_0$, then the leak and the target are no longer considered to be in adjacent blocks, a case we have already covered (see above Case 13).

{\bf Case 14:} (Leak: $\blockb_1$, Target: $C_2 \cup D_2$ ):

By Lemma \ref{ZF}, $A_2$ can force all of $C_2 \cup D_2$ with either $y_{t-1}$ forcing $y_{t}$ or $x_{t-k}$ forcing $x_{t}$. Similarly, $A_3$ can force $C_2 \cup D_2$ with either $y_{t+1}$ forcing $y_{t}$ or $x_{t+k}$ forcing $x_{t}$.

{\bf Case 15:} (Leak: $\blockb_3$, Target: $C_2 \cup D_2$ ):

If the leak is in $A_3 \cup B_3$, then relabel the vertices so that the first block is now $\blockb'_1 =  A_3 \cup B_3 \cup C_2 \cup D_2$ and apply Cases 2, 3, 5, or 6 as appropriate. If the leak is in $C_3 \cup D_3$, then by Lemma \ref{ZF}, $A_2$ can force all of $C_2 \cup D_2$ with either $y_{t-1}$ forcing $y_{t}$ or $x_{t-k}$ forcing $x_{t}$. Similarly, $A_3$ can force $C_2 \cup D_2$ with either $y_{t+1}$ forcing $y_{t}$ or $x_{t+k}$ forcing $x_{t}$.
\end{proof}

\section{Vertex and Edge Removal} \label{sec:ddrowgen}

It was shown in \cite{edholm2012vertex} that for any graph $G$ and any vertex $v$ or edge $e$, the following results are valid for zero forcing: $-1 \le \Zf{G}-\Zf{G-e} \le 1$ and  $-1 \le \Zf{G}-\Zf{G-v} \le 1$. We provide analogous results for leaky forcing and show these bounds are tight.

\begin{lemma} \label{lem:gtog-efort}
   Let $G$ be a graph and let $\ell \ge 0$. If $F$ is an $\ell$-leaky fort of $G-e$ (respectively $G$), then either $F$ is an $\ell$-leaky fort of $G$ (respectively $G-e$) or $e$ intersects $F$. 
\end{lemma}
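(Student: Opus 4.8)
The plan is to reduce both directions (the two readings of the ``respectively'') to a single symmetric observation about neighbor counts into $F$. Recall the definition: a nonempty set $F \subseteq V(H)$ is an $\ell$-leaky fort of a graph $H$ exactly when at most $\ell$ vertices of $V(H)\setminus F$ are adjacent in $H$ to precisely one vertex of $F$. The crucial point is that this is a purely static counting condition, with no reference to forcing dynamics, so it suffices to track, for each vertex outside $F$, how many neighbors it has inside $F$. Since $G$ and $G-e$ have the identical vertex set, $F$ is a legitimate nonempty candidate set in both graphs, and I only need to compare these counts.

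First I would dispose of the disjunction. If $e$ intersects $F$, then the conclusion ``either $F$ is an $\ell$-leaky fort of the other graph or $e$ intersects $F$'' holds outright, so there is nothing to prove. Hence I may assume $e = uv$ with $u \notin F$ and $v \notin F$; the goal is then to show that $F$ is an $\ell$-leaky fort of $G$ if and only if it is one of $G-e$, which settles both directions at once.

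The heart of the argument is the claim that, under the assumption $u,v \notin F$, every vertex $w \in V\setminus F$ has exactly the same set of neighbors in $F$ in $G$ as in $G-e$. This follows because the only adjacency that $G$ and $G-e$ disagree on is the pair $\{u,v\}$. I would split into three cases: if $w \notin \{u,v\}$, then $w$'s incident edges are untouched, so its neighborhood (and in particular its neighbors lying in $F$) is unchanged; if $w = u$, the only edge lost at $w$ leads to $v$, and $v \notin F$, so $w$'s neighbors in $F$ are unchanged; and symmetrically for $w = v$ using $u \notin F$. Consequently the neighbor-in-$F$ set, and hence its cardinality, agrees between $G$ and $G-e$ for every $w \in V\setminus F$.

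From this, the set of vertices in $V\setminus F$ having exactly one neighbor in $F$ is literally the same set in $G$ and in $G-e$, so its size is identical; therefore the bound ``$\le \ell$'' holds in one graph precisely when it holds in the other. This gives both ``respectively'' directions simultaneously and completes the proof. I expect no genuine obstacle here: the only real content is the observation that deleting an edge with no endpoint in $F$ cannot alter any adjacency between $V\setminus F$ and $F$, and the sole place the hypothesis $u,v\notin F$ is used is in the $w=u$ and $w=v$ subcases, so the main care needed is simply to record those two boundary cases explicitly rather than leaving them implicit.
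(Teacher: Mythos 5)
Your proposal is correct and is essentially the paper's own argument: the paper likewise reduces to the case where neither endpoint of $e$ lies in $F$ and observes that then every vertex of $V \setminus F$ has the same set of edges into $F$ in $G$ as in $G-e$, so the fort condition transfers verbatim. Your write-up simply makes the three-case check ($w \notin \{u,v\}$, $w=u$, $w=v$) explicit where the paper states it in one line.
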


\begin{proof}
For either respective case, if neither endpoint of $e$ is in $F$, then, for any vertex $u \in G- F$,
the set of edges between $u$ and $F$ is the same in both $G$ and $G-e$.
Hence, $F$ is an $\ell$-leaky fort of $G$ (respectively $G-e$).
\end{proof}

\begin{thm} \label{thm:edgeremove} Let $G$ be a graph and let $\ell \ge 0$. For any edge $e$, $$-2\leq \Zl{G}-\Zl{G-e}\leq 2$$
and these bounds are tight for any $\ell\ge 1$.
\end{thm}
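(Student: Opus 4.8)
The plan is to establish the inequality $-2 \le \Zl{G} - \Zl{G-e} \le 2$ by relating $\ell$-leaky forcing sets of $G$ to those of $G-e$ via the fort characterization (Proposition~\ref{prop:LeakyIFFforts}) together with Lemma~\ref{lem:gtog-efort}, and then exhibit explicit families of graphs where each bound is attained for every $\ell \ge 1$. The upper and lower inequalities will be handled symmetrically, since the only asymmetry between $G$ and $G-e$ is which graph has the extra edge; the fort-transfer lemma already treats both directions at once.

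\medskip

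\noindent\textbf{The two inequalities.} First I would prove $\Zl{G} \le \Zl{G-e} + 2$. Let $S$ be a minimum $\ell$-leaky forcing set of $G-e$, and write $e = uv$. I claim $S \cup \{u, v\}$ is an $\ell$-leaky forcing set of $G$. By Proposition~\ref{prop:LeakyIFFforts} it suffices to show $S \cup \{u,v\}$ meets every $\ell$-leaky fort $F$ of $G$. If $e$ intersects $F$, then $\{u,v\} \cap F \ne \emptyset$ and we are done. Otherwise, by Lemma~\ref{lem:gtog-efort}, $F$ is also an $\ell$-leaky fort of $G-e$, so $S$ (hence $S \cup \{u,v\}$) intersects $F$. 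This gives $\Zl{G} \le |S| + 2 = \Zl{G-e} + 2$, i.e.\ $\Zl{G} - \Zl{G-e} \le 2$. The reverse inequality $\Zl{G-e} - \Zl{G} \le 2$ is proved identically by swapping the roles of $G$ and $G-e$: take a minimum $\ell$-leaky forcing set $T$ of $G$, argue $T \cup \{u,v\}$ meets every $\ell$-leaky fort of $G-e$ using the other direction of Lemma~\ref{lem:gtog-efort}. Together these yield $-2 \le \Zl{G} - \Zl{G-e} \le 2$.

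\medskip

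\noindent\textbf{Tightness.} The harder and more delicate part is constructing, for each $\ell \ge 1$, graphs witnessing equality at $+2$ and at $-2$. The natural candidates are built from complete graphs and paths, whose $\ell$-leaky forcing numbers are pinned down exactly in Proposition~\ref{prop:basicgraphresults}. For the $+2$ case I would look for a graph $G$ and edge $e$ where removing $e$ creates many low-degree (degree $\le \ell$) vertices that Lemma~\ref{lem:lowdegree} forces into every leaky forcing set of $G-e$, while in $G$ those vertices have higher degree; the cleanest instance should come from the path/complete-graph dichotomy, exploiting that $\Zl{P_n} = n$ jumps sharply for $\ell \ge 2$ while $\Znum{P_n}{1}=2$. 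For the $-2$ case, one seeks the opposite effect: adding $e$ back to $G-e$ lowers degrees' impact or destroys a fort in a way that changes the count by exactly two. I expect the main obstacle to be verifying that the witness graphs actually achieve the full gap of $2$ rather than $1$ in both directions simultaneously for all $\ell \ge 1$; a single family parametrized by $\ell$ (e.g.\ attaching pendant or near-pendant structure to a clique $K_{\ell+2}$ so that one edge removal toggles two vertices between degree $\ell$ and degree $\ell+1$) is the most promising route, and I would check the extremal values directly against Proposition~\ref{prop:basicgraphresults} and Lemma~\ref{lem:lowdegree} rather than re-running the forcing process by hand.
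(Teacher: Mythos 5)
Your first half is correct and is essentially the paper's own argument: for each direction you take a minimum $\ell$-leaky forcing set of one of the two graphs, adjoin both endpoints of $e$, and use Lemma \ref{lem:gtog-efort} together with Proposition \ref{prop:LeakyIFFforts} to conclude that the enlarged set meets every $\ell$-leaky fort of the other graph. No issues there.

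The genuine gap is tightness, which the theorem asserts for every $\ell \ge 1$ and which your proposal only speculates about rather than proves; moreover, the heuristics you sketch would not lead to a proof. First, the mechanism you describe for the $+2$ case is backwards: if removing $e$ creates vertices of degree at most $\ell$, then Lemma \ref{lem:lowdegree} forces those vertices into every $\ell$-leaky forcing set of $G-e$, which makes $\Zl{G-e}$ \emph{larger} and hence witnesses $\Zl{G}-\Zl{G-e}=-2$, not $+2$. Indeed this is exactly the paper's $-2$ example: two copies of $S_{\ell+1}$ with $e$ joining their centers, where Theorem \ref{thm:LeakyTree} gives $\Zl{G}-\Zl{G-e} = 2\ell-(2\ell+2) = -2$; so your ``degree-toggle'' idea salvages only that direction. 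Second, the $+2$ direction is structurally out of reach of the tools you propose to rely on (Proposition \ref{prop:basicgraphresults} and Lemma \ref{lem:lowdegree}): deleting an edge only lowers degrees, so the degree-based lower bound of Lemma \ref{lem:lowdegree} applied to $G$ is never stronger than when applied to $G-e$, and it therefore can never certify $\Zl{G} > \Zl{G-e}$. One needs a non-degree-based lower bound on $\Zl{G}$. The paper supplies this with forts: for $\ell=1$ it takes $G=P_6$ plus a chord between the second and fifth vertices, with $\Zone{G}=4$ versus $\Zone{P_6}=2$; for $\ell\ge 2$ it builds a $K_4$ on pairs $\{v_{1,1},v_{1,2}\}$, $\{v_{2,1},v_{2,2}\}$ with pendant leaves and common-neighbor vertices $w_{i,k}$, proves $\Zl{G}\ge |B|+2$ by exhibiting two disjoint two-vertex $\ell$-leaky forts disjoint from the low-degree set $B$, and then verifies $\Zl{G-e}=|B|$ through a case analysis of leak placements. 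Without an explicit construction and verification of this kind, the tightness claim in the statement remains unproved.
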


\begin{proof}
Let $e=v_1v_2$. If $S'$ is a minimum $\ell$-leaky forcing set of $G-e$ (i.e., $|S'| = \Zl{G-e}$), then by Proposition \ref{prop:LeakyIFFforts}, $S'$ intersects every $\ell$-leaky fort of $G - e$.  However, by Lemma \ref{lem:gtog-efort} we have that every $\ell$-leaky fort of $G$ either intersects $e$ or is an $\ell$-leaky fort of $G-e$. It follows that $S' \cup  \{v_1,v_2\}$ intersects every $\ell$-leaky fort of $G$. Hence, by minimality, $|S' \cup \{v_1,v_2\}| \ge  \Zl{G}$ and so $\Zl{G} \le |S' \cup \{v_1,v_2\}| \le \Zl{G-e} +2$. Therefore, $\Zl{G}-\Zl{G-e}\geq -2$.

Now, we mostly repeat the previous argument, exchanging $G$ and $G-e$ as follows. If $S$ is a minimum $\ell$-leaky forcing set of $G$, then $S$ intersects every $\ell$-leaky fort of $G$. By Lemma \ref{lem:gtog-efort}, we have that every $\ell$-leaky fort of $G-e$ either intersects $e$ or is an $\ell$-leaky fort of $G$. It follows that $S \cup \{v_1,v_2\}$ intersects every $\ell$-leaky fort of $G-e$. Hence, by minimality, $|S \cup \{v_1,v_2\}| \ge  \Zl{G - e}$ and so $\Zl{G-e)} \le |S \cup \{v_1,v_2\}| \le \Zl{G} +2$; hence, $\Zl{G-e}- \Zl{G}\le 2$.

We will now demonstrate the bound is tight in both directions.~\\
~\\
\noindent {\it Example for $\Zl{G}-\Zl{G-e} = -2$}.

Choose $\ell \ge 1$. Consider $2S_{\ell+1}$, two disjoint copies of the star graph, each with $\ell+1$ vertices (and hence, each with $\ell$ leaves). Let $G$ be the graph constructed by adding the edge $e$ between the two central vertices of each star. Since $G$ is a tree and $G-e$ is a forest, by Theorem \ref{thm:LeakyTree}, the forcing sets are precisely the set of vertices of degree $\ell$ or less, so we have $\Zl{G}-\Zl{G-e} = 2 \ell - (2\ell+2) = -2$
~\\
\noindent {\it Example for $\Zl{G}-\Zl{G-e} = 2$}.

Choose $\ell \ge 1$.

For $\ell = 1$, consider $G = P_6 + e$ the path graph on $6$ vertices  with an edge added between the second and fifth vertices. One can check that $\Zone{G} = 4$ whereas $\Zone{G-e} = \Zone{P_6} = 2$.

For $\ell \ge 2$, consider the graph $G$ illustrated in Figure \ref{fig:gre}, constructed as follows:  Take the complete graph on $4$ vertices that come in $2$ pairs: $v_{1,1}, v_{1,2}, v_{2,1}, v_{2,2}$. 
Add $\lfloor \ell/2 \rfloor$ leaves to each $v_{1,1}$ and $v_{2,1}$, and add 
$\lceil \ell/2 \rceil$ leaves to each $v_{2,1}$ and $v_{2,2}$. Finally. for each $i=1,2$, add $\lfloor \ell/2 \rfloor$ vertices $w_{i,1}, \ldots, w_{i,\lfloor \ell/2 \rfloor}$ vertices that are adjacent to both $v_{i,1}$ and $v_{i,2}$.

Let $B$ be the set of vertices with degree $\ell$ or less (i.e, all of the leaves and the $w_{i,j}$).

We will show that $\Zl{G} = |B| + 2$.

First, let us show that the set $B \cup \{v_{1,1}, v_{2,1}\}$ is a 
$\ell$-leaky forcing set. Notice that for any fixed $i'=1,2$, the only way to prevent $v_{i',2}$ from being forced is to place all $\ell$ leaks on the leaves of the $v_{i',2}$ {\it and} the $w_{i',j}$. Since this would requires all $\ell$ leaks, there are only sufficiently many leaks to prevent one $v_{i,2}$ from being forced. Thereafter, $v_{1,1}$ or $ v_{2,1}$ can force $v_{i',2}$.

Next, to see this is minimum 
first note that by Lemma \ref{lem:lowdegree}, $B$ must be a subset of any $\ell$-leaking forcing 
set. Second, observe that $\{v_{1,1}, v_{1,2}\}$ (respectively $\{v_{2,1}, v_{2,2}\}$) is a fort as there are exactly $\ell$ vertices with one neighbor in $\{v_{1,1}, v_{1,2}\}$ (respectively $\{v_{2,1}, v_{2,2}\}$) (i.e., the $\ell$ leaves for that pair). By Proposition \ref{prop:LeakyIFFforts}, every $\ell$-leaky forcing set must intersect each fort. Since these two forts are disjoint and are disjoint from $B$, the zero forcing set must contain at least two additional vertices, one from each fort. Therefore, $\Zl{G} \ge |B| + 2$, and since we have demonstrated such a forcing set, we have $\Zl{G} = |B| + 2$.

Now consider $G-e$ where $e$ is the edge $v_{1,1}v_{2,1}$. 

It suffices to show that $B$ is an $\ell$-leaky forcing set of $G-e$, for by the inequality of the theorem, this is best possible. As such, this will yield $\Zl{G-e} = |B|$.

Fix $i'=1 \text{ or } 2$, $j'=1  \text{ or }  2$, let $i'' = 2  \text{ or }  1$ with $i'' \ne i'$ and $j'' = 2  \text{ or } 1$ with $j'' \ne j'$. 

If any leaf of $v_{i',j'}$ does not have a leak, then that leaf will immediately force $v_{i',j'}$. Hence, to prevent $v_{i',j'}$ to be forced immediately it is necessary to place leaks on each of the leaves of $v_{i',j'}$. However, if $v_{i',j''}$ is ever blue and any one of the $w_{i',k}$ does not have a leak, that $w_{i',k}$ can force $v_{i',j'}$.

It follows there two ways to prevent $v_{i',j'}$ from being forced immediately:

First, one could place all $\ell$ leaks on the leaves of $v_{i',1}$ and $v_{i',2}$, which will result in $v_{i',1}$ and $v_{i',2}$ left uncolored (as the $w_{i',k}$ cannot force), but all the other leaves will be able to force their neighbors resulting in only $v_{i',1}$ and $v_{i',2}$ uncolored. Since all $\ell$ leaks are already accounted for, $v_{i'',1}$ can force $v_{i',2}$ (as $e=v_{i',1}v_{i'',1}$ has been removed
). Thereafter, $v_{i'',2}$ can force $v_{i',1}$.

Second, one could place leaks on the leaves of $v_{i',j'}$ as well as all of the $w_{i',k}$. This requires at least $2 \lfloor \ell / 2 \rfloor$ leaks. Since preventing any other $v_{i,j}$ from being forced immediately requires at least $2 \lfloor \ell / 2 \rfloor \ge \ell-1$ leaks (or exactly $\ell$ if $\ell$ is even), all other of the $v_{i,j}$ will be forced by their leaves (or when $\ell = 3$, perhaps by a $w_{i,k}$ if the one extraneous leak on their only leaf). With at most one leak unaccounted for, one of $v_{i',j''}$ or $v_{i'',j''}$ can force $v_{i',j'}$.

Since $B$ is a forcing set and this is best possible, $\Zl{G-e} = |B|$. Altogether, we have $\Zl{G}-\Zl{G-e} = (|B|+2) - |B| = 2$.
\end{proof}

\usetikzlibrary{decorations.pathreplacing, positioning, calc}

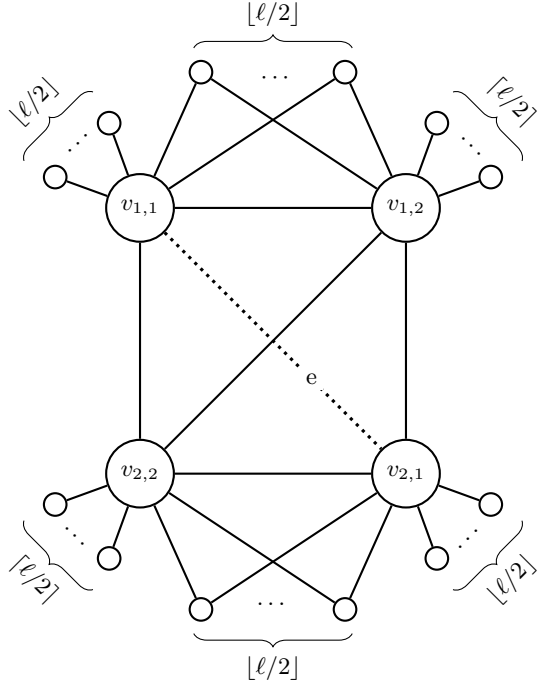
\begin{figure}
\centering
\begin{tikzpicture}[
    v/.style={circle, draw, thick, fill=white, inner sep=2pt, minimum size=9mm},
    leaf/.style={circle, draw, thick, inner sep=1pt, minimum size=3mm, fill=white},
    shared/.style={circle, draw, thick, inner sep=1pt, minimum size=3mm, fill=white},
    font=\small
]

\def\l{2}           
\def\R{2.5cm}       
\def\pairspread{45}  
\def\leafedgelen{1.2cm} 
\def\leafspread{25}  
\def\shareddist{1.2cm}
\def\sharedspread{15}  
\def\leafshift{0}   

\foreach \i in {1,2} {
    \pgfmathsetmacro{\pairCenterAngle}{90 - ((\i-1) * 360 / \l)}
    
    \pgfmathsetmacro{\angleA}{\pairCenterAngle - \pairspread}
    \pgfmathsetmacro{\angleB}{\pairCenterAngle + \pairspread}
    
    \node[v] (v\i1) at (\angleA:\R) {$v_{\i,2}$};
    \node[v] (v\i2) at (\angleB:\R) {$v_{\i,1}$};

    \pgfmathsetmacro{\leafAngleA}{\angleA - \leafshift}
    \node[leaf] (l\i1a) at ($(v\i1) + (\leafAngleA+\leafspread:\leafedgelen)$) {};
    \node[leaf] (l\i1b) at ($(v\i1) + (\leafAngleA-\leafspread:\leafedgelen)$) {};
    \coordinate (midL1) at ($(l\i1a)!0.5!(l\i1b)$);
    \node[rotate=\leafAngleA+90] at ($(v\i1)!1.1!(midL1)$) {$\dots$};
    \draw[thick] (v\i1) -- (l\i1a);
    \draw[thick] (v\i1) -- (l\i1b);
    
    \pgfmathsetmacro{\leafAngleB}{\angleB + \leafshift}
    \node[leaf] (l\i2a) at ($(v\i2) + (\leafAngleB+\leafspread:\leafedgelen)$) {};
    \node[leaf] (l\i2b) at ($(v\i2) + (\leafAngleB-\leafspread:\leafedgelen)$) {};
    \coordinate (midL2) at ($(l\i2a)!0.5!(l\i2b)$);
    \node[rotate=\leafAngleB+90] at ($(v\i2)!1.1!(midL2)$) {$\dots$};
    \draw[thick] (v\i2) -- (l\i2a);
    \draw[thick] (v\i2) -- (l\i2b);

    \node[shared] (s\i a) at (\pairCenterAngle+\sharedspread:\R+\shareddist) {};
    \node[rotate=\pairCenterAngle+90] at (\pairCenterAngle:\R+\shareddist-0.2cm) {$\dots$};
    \node[shared] (s\i b) at (\pairCenterAngle-\sharedspread:\R+\shareddist) {};
    
    \draw[thick] (v\i1) -- (s\i a); \draw[thick] (v\i2) -- (s\i a);
    \draw[thick] (v\i1) -- (s\i b); \draw[thick] (v\i2) -- (s\i b);

    \def\labelpos{above}
    \ifnum\i=2 \def\labelpos{below} \fi
    
    \coordinate (b\i s1) at ($(v\i1)!1.28!(l\i1b.center)$);
    \coordinate (b\i e1) at ($(v\i1)!1.28!(l\i1a.center)$);
    \draw[decorate, decoration={brace, amplitude=5pt, mirror, raise=4pt}] (b\i s1) -- (b\i e1)
        node[midway, sloped, \labelpos=9pt] {$\lceil \ell/2 \rceil$};
        
    \coordinate (b\i s2) at ($(v\i2)!1.28!(l\i2b.center)$);
    \coordinate (b\i e2) at ($(v\i2)!1.28!(l\i2a.center)$);
    \draw[decorate, decoration={brace, amplitude=5pt, mirror, raise=4pt}] (b\i s2) -- (b\i e2)
        node[midway, sloped, \labelpos=9pt] {$\lfloor \ell/2 \rfloor$};
        
    \coordinate (pair\i center) at ($(v\i1)!0.5!(v\i2)$);
    \coordinate (bs_\i s) at ($(pair\i center)!1.1!(s\i b.center)$);
    \coordinate (bs_\i e) at ($(pair\i center)!1.1!(s\i a.center)$);
    \draw[decorate, decoration={brace, amplitude=5pt, mirror, raise=4pt}] (bs_\i s) -- (bs_\i e)
        node[midway, sloped, \labelpos=9pt] {$\lfloor \ell/2 \rfloor$};
}

\draw[thick] (v11) -- (v12); 
\draw[thick] (v21) -- (v22); 
\draw[thick] (v12) -- (v21); 
\draw[thick] (v11) -- (v22); 
\draw[thick] (v11) -- (v21); 

\draw[very thick, dotted] (v12) -- (v22) node[midway, below right=10pt and 10pt, fill=white, inner sep=2pt] {e};

\end{tikzpicture}
\caption{An example of a graph with $\Zl{G} - \Zl{G-e}=2$ as given in part of the proof of Theorem \ref{thm:edgeremove}.}
\label{fig:gre}
\end{figure}

\begin{thm} \label{thm:vtxremove}
Let $G$ be a graph, let $\ell \ge 0$ and let $v$ be a vertex with degree $\deg(v)$, then \[-1 \leq \Zl{G-v}-\Zl{G}\leq \deg(v)\] and these bounds are tight when $\ell\geq 1$.
\end{thm}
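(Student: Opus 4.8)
The plan is to prove the two inequalities separately, using the fort-intersection characterization from Proposition~\ref{prop:LeakyIFFforts} as the main tool, in close analogy to the edge-removal argument of Theorem~\ref{thm:edgeremove}, and then to exhibit explicit extremal examples for tightness.

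\textbf{Lower bound $\Zl{G-v}-\Zl{G}\geq -1$.} First I would establish an analogue of Lemma~\ref{lem:gtog-efort} for vertex removal: if $F$ is an $\ell$-leaky fort of $G-v$, then either $F$ is an $\ell$-leaky fort of $G$, or $v$ ``interferes'' with $F$. The cleaner direction to exploit is this: take a minimum $\ell$-leaky forcing set $S'$ of $G-v$ and consider $S'\cup\{v\}$ in $G$. I would argue that $S'\cup\{v\}$ intersects every $\ell$-leaky fort of $G$. Indeed, any fort $F$ of $G$ not containing $v$ restricts to a set whose adjacency pattern to $V(G-v)\setminus F$ is unchanged by deleting $v$, so $F$ is a fort of $G-v$ and is hit by $S'$; any fort containing $v$ is hit by $v$ itself. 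Hence $\Zl{G}\leq |S'\cup\{v\}|\leq \Zl{G-v}+1$, giving $\Zl{G-v}-\Zl{G}\geq -1$. (Care is needed because deleting $v$ changes $V\setminus F$ and can shift which vertices have exactly one neighbor in $F$; the key observation is that for $F\subseteq V(G-v)$ with $v\notin F$, a vertex $u\neq v$ has the same number of neighbors in $F$ in both graphs.)

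\textbf{Upper bound $\Zl{G-v}-\Zl{G}\leq \deg(v)$.} Here I would start from a minimum $\ell$-leaky forcing set $S$ of $G$ and let $N(v)$ be the neighborhood of $v$. I claim $(S\setminus\{v\})\cup N(v)$ is an $\ell$-leaky forcing set of $G-v$, which has size at most $\Zl{G}-1+\deg(v)$ when $v\in S$, or at most $\Zl{G}+\deg(v)$ in general; either way this yields $\Zl{G-v}\leq \Zl{G}+\deg(v)$. To verify the claim, I would again pass through forts: every $\ell$-leaky fort $F$ of $G-v$ is, by the vertex-removal fort lemma, either a fort of $G$ (hence hit by $S$, and $v\in S\Rightarrow$ we must ensure it is still hit after removing $v$, which is why we add $N(v)$) or else $F$ interacts with $v$ in $G$, in which case $F$ must contain a neighbor of $v$ and is therefore hit by $N(v)$. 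The subtle point deserving the most care is the bookkeeping of whether $v\in S$ and showing that replacing $v$ by its full neighborhood repairs any fort that $v$ alone was covering.

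\textbf{Tightness.} For the lower bound $\Zl{G-v}-\Zl{G}=-1$, I would use a tree or forest and apply Theorem~\ref{thm:LeakyTree}: choose $v$ of degree greater than $\ell$ whose removal creates no new low-degree vertices except possibly turning one neighbor into a leaf, engineering the count $|\mathcal{U}|$ to drop by exactly one. For the upper bound $\Zl{G-v}-\Zl{G}=\deg(v)$, the natural construction is again tree-like: take $v$ adjacent to $\deg(v)$ vertices each of which has degree exactly $\ell+1$ (so they are not in $\mathcal{U}$ while $v$ is present, but all become degree $\ell$ after deleting $v$), so that removing $v$ forces all $\deg(v)$ neighbors into the minimum $\ell$-leaky forcing set, raising the count by exactly $\deg(v)$; I would confirm via Theorem~\ref{thm:LeakyTree} that $\Zl{G}$ and $\Zl{G-v}$ are computed purely by degree counts.

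\emph{Main obstacle.} I expect the hardest part to be the vertex-removal fort lemma itself and the associated upper-bound argument: unlike edge deletion, removing $v$ both deletes $v$ from the ground set and alters the one-neighbor condition for \emph{every} former neighbor of $v$, so the clean ``$e$ intersects $F$'' dichotomy of Lemma~\ref{lem:gtog-efort} must be replaced by a more delicate statement about $N(v)$, and verifying that adding the entire neighborhood $N(v)$ suffices to hit all newly-created forts (while matching the $\deg(v)$ bound) is where the real care lies.
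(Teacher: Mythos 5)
Your proofs of the two inequalities are correct, but they take a genuinely different route from the paper. The paper argues directly with forcing sequences: for the lower bound, if $S'$ is a minimum $\ell$-leaky forcing set of $G-v$, then $S'\cup\{v\}$ forces $G$ because every force valid in $G-v$ remains valid in $G$ once $v$ is blue; for the upper bound, $S\cup N(v)\setminus\{v\}$ forces $G-v$ because any force performed by $v$ in $G$ is pre-empted (its target is already blue), and any vertex that needed $v$ to be blue in order to force no longer has $v$ as a neighbor. You instead route everything through Proposition \ref{prop:LeakyIFFforts}, via a vertex analogue of Lemma \ref{lem:gtog-efort}: a fort of $G$ avoiding $v$ is a fort of $G-v$, and a fort of $G-v$ is either a fort of $G$ or has exactly one vertex adjacent to $v$, hence meets $N(v)$. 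Your dichotomy is correct --- the only vertex whose neighbor-count can change when $v$ is re-inserted is $v$ itself --- and it absorbs all of the leak bookkeeping into Proposition \ref{prop:LeakyIFFforts}, at the price of being less elementary than the paper's direct argument. Either way one gets $\Zl{G}\le \Zl{G-v}+1$ and $\Zl{G-v}\le \Zl{G}+\deg(v)$.

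However, your tightness construction for the lower bound is wrong, and this is a genuine gap since tightness is part of the statement. You propose a tree with $v$ of degree greater than $\ell$ ``whose removal creates no new low-degree vertices except possibly turning one neighbor into a leaf, engineering the count $|\mathcal{U}|$ to drop by exactly one.'' This is impossible. By Theorem \ref{thm:LeakyTree}, the $\ell$-leaky forcing number of a tree (or, component-wise, a forest) equals $|\mathcal{U}|$, the number of vertices of degree at most $\ell$. If $\deg(v)>\ell$ then $v\notin\mathcal{U}(G)$, and deleting $v$ only lowers the degrees of its neighbors, so $\mathcal{U}(G)\subseteq\mathcal{U}(G-v)$: the count can never drop. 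Under your hypotheses the difference $\Zl{G-v}-\Zl{G}$ is $0$ (no new low-degree vertices) or $+1$ (one neighbor becomes a leaf), never $-1$. To realize $-1$ you must delete a \emph{low}-degree vertex: for $\ell\ge 1$, take the star $S_n$ with $n\ge \ell+3$ and let $v$ be a leaf, so $\Zl{S_n}=n-1$ while $\Zl{S_n-v}=\Zl{S_{n-1}}=n-2$. (The paper instead glues two cliques at a shared vertex and attaches a pendant leaf to it; the deleted vertex is again the leaf.)

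Your upper-bound tightness example is essentially the paper's (a height-two tree whose internal vertices each have $\ell$ leaf children), but you must additionally require $\deg(v)>\ell$: if $\deg(v)\le\ell$, then $v\in\mathcal{U}(G)$, so deleting $v$ gains its $\deg(v)$ neighbors but loses $v$ itself, and the count rises only by $\deg(v)-1$.
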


\begin{proof}
    Let $S'$ be a minimum $\ell$-leaky forcing set of $G-v$. Then $S \cup \{v\}$ is an $\ell$- leaky forcing set of $G$ as all the same forces apply. Therefore, $\Zl{G} \leq \Zl{G-v}+1$ or $-1 \leq \Zl{G-v}-\Zl{G}$.

    Let $S$ be a minimum $\ell$-leaky forcing set of $G$. Then we claim that $S \cup N(v) \setminus \{v\}$ is an $\ell$-leaky forcing set of $G-v$. Specifically, we claim that the same sequence of forces for $S$ in $G$ can effectively apply to $S \cup N(v) \setminus \{v\}$ in $G-v$: Any force by $v$ has effectively already occurred as the to-be forced vertex is already blue. Additionally, 
    a neighbor of $v$ in $G$, requiring $v$ to be blue to force in $G$ can still force in $G-v$. Therefore, $S \cup N(v) \setminus \{v\}$ is an $\ell$-leaky forcing set of $G-v$ with size of $\Zl{G} + \deg(v) -1$ (if $v \in S$) or $\Zl{G} + \deg(v)$ (if $v \not\in S$). Hence, $\Zl{G-v} \le \Zl{G} + \deg(v)$ or, equivalently, $\Zl{G-v}-\Zl{G}\leq \deg(v)$.

   We will now demonstrate the bound is tight in both directions when $\ell \ge 1$. 

    Choose $\ell \ge 1$. Consider the graph $G = K_\ell +_v K_\ell$, that is two complete graphs with one shared vertex $v$. By Lemma \ref{lem:lowdegree}, every vertex except $v$ must be in a leaky forcing set. Since there are $2\ell$ vertices which could force $v$, $v$ will be forced regardless of the location of the leaks. So this is an $
    \ell$-leaky forcing set. Hence, $\Zl{K_\ell +_v K_\ell} = 2\ell -1$.
    Similarly, if we consider $K_\ell +_v K_\ell + w$, which is the graph $K_\ell +_v K_\ell$ with a leaf $w$ added to the shared vertex $v$. 
    By the same argument, $\Zl{K_\ell +_v K_\ell + w} = 2\ell.$ Hence, $\Zl{K_\ell +_v K_\ell}-\Zl{K_\ell +_v K_\ell + w}=-1$, so the lower bound is tight.
    
    Choose $\ell \ge 1$. Consider a rooted tree $T$ constructed as follows. Let $r$ be the root vertex with degree $d > \ell$; let $T$ have height 2; let and all non-root non-leaf vertices each have $\ell$ children. By Theorem \ref{thm:LeakyTree}, $\Zl{T}=d \ell$, the number of leaves of $T$. Consider $T-r$ which is $d$ disjoint trees each with $\ell$ leaves. However, the former child of $r$ of each tree now has degree $\ell$, so by Theorem $\ref{thm:LeakyTree}$, $\Zl{T-r} = d \ell+d$. Hence, $\Zl{T-r}-\Zl{T}=d$, so the upper bound is tight.
\end{proof}

\section{Graphs with Extreme 1-Leaky Forcing Number} \label{sec:extremal}
For any connected graph $G$ on the $n \ge 2$ vertices, we have $2 \le \Zone{G} \le n-1$. In this section, we characterize the connected graphs that achieve these extremal values, beginning with the case where \( \Zone{G} = 2 \). 

In the context of zero forcing, it is known that $\Zf{G} \leq 2$ if and only if $G$ is a graph on two parallel paths (see \cite{row2012technique}). However, the characterization differs significantly for the 1-leaky forcing number where only paths and cycles achieve $ \Zone{G} = 2$ in this case.

\begin{thm} \label{thm:Zoneclass2}
 Let $G$ be a connected graph on $n\geq 2$ vertices. Then $\Zone G = 2$ if and only if $G$ is a path graph or a cycle graph.
\end{thm}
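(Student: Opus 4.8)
The forward direction is immediate: if $G$ is a path or a cycle, then $\Zone{G}=2$ by Proposition~\ref{prop:basicgraphresults}. The substance of the statement is the converse, and my plan is to recast it as a bound on the maximum degree. Since a connected graph in which every vertex has degree at most $2$ is necessarily a path or a cycle, it suffices to prove that $\Zone{G}=2$ forces $\Delta(G)\le 2$; equivalently, that any connected graph possessing a vertex $v$ with $\deg(v)\ge 3$ has $\Zone{G}\ge 3$.

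To bring the degree hypothesis to bear, I would first apply Lemma~\ref{lem:moreleaksinq} to obtain $\Zf{G}\le\Zone{G}=2$. If $\Zf{G}=1$ then $G$ is a path, which has no vertex of degree $\ge 3$; hence $\Zf{G}=2$, and by the characterization of \cite{row2012technique} the graph $G$ is a graph on two parallel paths. In that representation a vertex of degree at least $3$ can occur only because of a connecting ``rung'' beyond the one or two boundary rungs that would by themselves realize $G$ as a path or a cycle. The goal is then to show that any such interior rung drives $\Zone{G}$ above $2$, no matter which two vertices one tries to use as a forcing set.

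I would establish this with Lemma~\ref{lem:twodifferentleaky}: a two-element set $S$ is a $1$-leaky forcing set exactly when it is a zero forcing set for which every vertex outside $S$ admits two distinct forcers across two force sequences. A size-$2$ zero forcing set determines two forcing chains partitioning $V(G)$, and reversing these chains shows that every \emph{interior} chain vertex is automatically forced from both sides; thus the only vertices that can fail the two-forcer test are the chain termini. The plan is to place a single leak at $v$ (or at the far endpoint of the interior rung) and trace the resulting chains to produce a vertex, trapped between the rung and one end of a path, that can never acquire a non-leak forcer. Dually, and perhaps more cleanly, I would use Proposition~\ref{prop:LeakyIFFforts} to exhibit two $1$-leaky forts straddling the interior rung whose union cannot be pierced by any two vertices, which directly yields $\Zone{G}\ge 3$.

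The main obstacle is precisely this last step: converting the local hypothesis $\deg(v)\ge 3$ into a leak (or pair of forts) that defeats \emph{every} choice of the two-element set $S$, while tracking the position of the interior rung relative to the ends of the two paths and the possibility of a degree-$4$ vertex carrying two rungs. Because the chain-reversal observation already guarantees two forcers at all interior vertices, the entire difficulty is concentrated in showing that a non-boundary rung necessarily strands a chain terminus under a suitable leak; this case analysis on the two-parallel-paths structure is where I expect the real effort to lie.
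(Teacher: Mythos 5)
There is a genuine gap: your argument is a plan, not a proof. The decisive step---showing that an ``interior rung'' in Row's two-parallel-paths structure (equivalently, a vertex of degree at least $3$) defeats \emph{every} two-element candidate set $S$---is never carried out; you explicitly defer it as ``where I expect the real effort to lie.'' Worse, the supporting claim you lean on is false as stated. You assert that chain reversal shows every interior chain vertex is automatically forced from both sides, so that only chain termini can fail the two-forcer test of Lemma~\ref{lem:twodifferentleaky}. But that lemma requires both force sequences to start from the \emph{same} initial set $S$, whereas the reversed process starts from the terminal vertices of the chains, not from $S$. Concretely, take $G = P_n$ with $S = \{v_1, v_2\}$ (the first two vertices): this is a zero forcing set, and $v_3, \dots, v_{n-1}$ are interior chain vertices, yet in any force sequence starting from $S$ the vertex $v_i$ can only ever be forced by $v_{i-1}$; indeed $S$ fails to be a $1$-leaky forcing set (leak at $v_2$). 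So failures are not ``concentrated at the chain termini,'' and the case analysis you postponed is both necessary and shaped differently than you anticipate.

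For comparison, the paper's proof avoids Row's characterization and the maximum-degree reduction entirely. Given a $1$-leaky forcing set $\{x_1, y_1\}$, place the leak on $y_1$: then $x_1$ alone must initiate a forcing chain $x_1, x_2, \dots, x_{n-1}$ covering $V(G) \setminus \{y_1\}$, and the requirement that each $x_i$ force $x_{i+1}$ rules out any edge $x_i x_k$ with $k > i+1$, so the induced subgraph on $V(G) \setminus \{y_1\}$ is a path. Symmetrically, $V(G) \setminus \{x_1\}$ induces a path. Checking where $y_1$ can attach compatibly with both constraints (adjacent to $x_{n-1}$, and possibly to $x_1$) leaves exactly $P_n$ and $C_n$. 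If you want to salvage your approach, you would need to replace the chain-reversal claim with an honest analysis of which vertices admit two distinct forcers from $S$, at which point you are essentially redoing the paper's argument with the added overhead of the two-parallel-paths machinery.
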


\begin{proof}
The backwards direction holds as $\Zone{P_n} = \Zone{C_n} = 2$ (see Proposition \ref{prop:basicgraphresults}). It remains to show that if $G$ has $\Zone{G}=2$, then $G$ is $P_n$ or $C_n$

Suppose $\{x_1, y_1\}$ is a 1-leaky forcing set of $G$. Without loss of generality, suppose $y_1$ has the leak. It must be the case that $x_1$ is capable of initiating a forcing chain (i.e., a path $x_1, x_2, \ldots, x_{n-1}$ where $x_i$ forces $x_{i+1}$) that forces all of $G$ without $y_1$ forcing at all. 
Necessarily, since $x_i$ 
forces $x_{i+1}$, there cannot be any edge from $x_i$ to $x_k$ for $i+1<k<n-1$. Further, since the path $x_1, x_2, \ldots, x_{n-1}$ must encompass all vertices of $G$, except for $y_1$, the only edges among them must be of the form $x_i x_{i+1}$. It follows that the induced subgraph on $V(G) \setminus \{ y_1\}$ must be a path. Similarly, $V(G) \setminus \{x_1\}$ must be a path. If $y_1x_2$ is an edge, then a leak on $x_2$ will interrupt both forcing chains. If $y_1x_i$ is an edge for $2<i<n-1$, then the induced subgraph $V(G) \setminus \{x_1\}$ can't be a path. It follows that $y_1$ must be adjacent to $x_{n-1}$. The only remaining possibilities are whether or not $x_1$ and $y_1$ are adjacent; one case results in $C_n$ and the other results in $P_n$.
\end{proof}

Next, we will characterize connected graphs such that $\Zone{G} = n-1$. Note that for zero forcing, $Z(G)=n-1$ if and only if $G$ is the complete graph (see \cite{row2012technique}). In contrast, there are four such graphs in the $1$-leaky case. Let $K_n$ and $S_n$ be the complete graph and star graph on $n$ vertices, respectively. Define $K_{n-1}+v^\plainell$ to be the complete graph on $n-1$ vertices with an added leaf $v^\plainell$ and $K_n-e$ to be the complete graph on $n$ vertices with one edge removed. 

We will first prove some helpful lemmas. We start by providing a characterization for graphs with $\ell$-leaky forcing number equal to $n-1$ in terms of their $\ell$-leaky fort structure.

\begin{lemma}\label{lem:MinForts}
For a graph $G$ on $n$ vertices and $0\leq \ell\leq n$, $Z_{(\ell)}(G) \ge n-1$ if and only if the set of minimal $\ell$-leaky forts is precisely
    \begin{enumerate}
        \item singleton sets, one for each vertex of degree $\ell$ or less, and
        \item all pairs of vertices of degree at least $\ell+1$. 
    \end{enumerate}
\end{lemma}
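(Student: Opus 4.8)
The plan is to translate the inequality $Z_{(\ell)}(G) \ge n-1$ into a purely local statement about which one- and two-element sets are forts, using Proposition \ref{prop:LeakyIFFforts} together with the monotonicity of forcing sets (any superset of an $\ell$-leaky forcing set is again one). First I would record two elementary facts straight from the definition of an $\ell$-leaky fort. A singleton $\{v\}$ is an $\ell$-leaky fort if and only if $\deg(v) \le \ell$, since the vertices of $V \setminus \{v\}$ adjacent to exactly one vertex of $\{v\}$ are precisely the $\deg(v)$ neighbors of $v$; moreover every singleton fort is automatically minimal. This pins down the forts of type (1) unconditionally, independent of the hypothesis.

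The key reformulation I would establish is: $Z_{(\ell)}(G) \ge n-1$ if and only if for every pair $\{a,b\}$ at least one of $\{a\}$, $\{b\}$, $\{a,b\}$ is an $\ell$-leaky fort. Indeed, $Z_{(\ell)}(G) \ge n-1$ is equivalent to saying that no set of size $n-2$ is an $\ell$-leaky forcing set (by monotonicity this also excludes smaller sets), i.e.\ that $V \setminus \{a,b\}$ fails to be a forcing set for every pair $\{a,b\}$. By Proposition \ref{prop:LeakyIFFforts}, $V \setminus \{a,b\}$ is not a forcing set exactly when some fort avoids it, that is, when some nonempty fort is contained in $\{a,b\}$ --- which means one of $\{a\}$, $\{b\}$, $\{a,b\}$ is a fort. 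This equivalence is the engine of the whole argument, and it sidesteps any explicit count of the vertices adjacent to exactly one of $a,b$.

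With this reformulation in hand both directions are short. For the backward direction, assuming the minimal forts are exactly (1) and (2), I would take an arbitrary pair $\{a,b\}$ and split on degrees: if $\deg(a)\le \ell$ then $\{a\}$ is a fort, similarly for $b$, and otherwise $\deg(a),\deg(b)\ge \ell+1$ so $\{a,b\}$ is a type-(2) fort; in every case the pair contains a fort, and the reformulation yields $Z_{(\ell)}(G)\ge n-1$. For the forward direction, assume $Z_{(\ell)}(G)\ge n-1$. The type-(1) forts are already accounted for. For type (2), any pair with $\deg(a),\deg(b)\ge \ell+1$ has neither singleton a fort, so the reformulation forces $\{a,b\}$ itself to be a fort, and it is minimal because its only proper nonempty subsets are $\{a\}$ and $\{b\}$. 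Finally, to show no other minimal forts exist it suffices to rule out minimal forts of size $\ge 3$ (a minimal fort of size $2$ must have both endpoints of degree $\ge \ell+1$ by minimality, hence is of type (2)); if $F$ were a minimal fort with $|F|\ge 3$, choosing distinct $a,b\in F$ makes $\{a\}$, $\{b\}$, $\{a,b\}$ all proper subsets of $F$, none of which is a fort by minimality, directly contradicting the reformulation.

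The main obstacle --- really the only nontrivial point --- is getting the reformulation exactly right: one must be careful that $Z_{(\ell)}(G)\ge n-1$ concerns the absence of forcing sets of size up to $n-2$ (so monotonicity is genuinely needed), and that ``a fort contained in $\{a,b\}$'' is the correct dualization of ``$V\setminus\{a,b\}$ misses a fort.'' Once that bridge is built, the degree bookkeeping and the minimality argument are routine.
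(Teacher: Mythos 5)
Your proof is correct and follows essentially the same route as the paper's: both arguments hinge on Proposition \ref{prop:LeakyIFFforts} applied to complements of pairs, turning $Z_{(\ell)}(G)\ge n-1$ into the statement that every pair $\{a,b\}$ contains one of $\{a\}$, $\{b\}$, $\{a,b\}$ as a fort, and then splitting on degrees. In fact your write-up is somewhat more thorough than the paper's, since you explicitly verify minimality of the pair forts and rule out minimal forts of size at least $3$, points the paper leaves implicit.
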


\begin{proof}
    First, let us prove that if $\Zl{G} = n-1$ or $n$, then the minimal $\ell$-leaky forts are as described.

    If $\Zl{G} = n$, then every vertex must form its own singleton $\ell$-leaky fort. This only occurs if every vertex has degree $\ell$ or less.
    
    If $\Zl{G} = n-1$, then there are no $\ell$-leaky forcing sets of size $n-2$. Suppose for contradiction that there is a pair of vertices $u,v$ where $\{u,v\}$ is not an $\ell$-leaky fort (nor is $\{u\}$ and nor is $\{v\}$). Then, every $\ell$-leaky fort containing $u$ or $v$ contains a vertex in $V(G) \setminus \{u,v\}$. Hence, $V(G) \setminus \{u,v\}$ intersects every $\ell$-leaky fort and is an $\ell$-leaky forcing set by Proposition \ref{prop:LeakyIFFforts}. It follows that $\{u,v\}$, $\{u\}$, or $\{v\}$ must be a fort. By Proposition \ref{lem:lowdegree}, the latter two can only occur if either $u$ or $v$ have degree $\ell$ or less. Hence if $u$ and $v$ both have degree at least $\ell$ + 1, $\{u,v\}$ must be a fort.

    In reverse, suppose $\Zl{G} \le n-2$. Then by Proposition \ref{prop:LeakyIFFforts}, there is an $\ell$-leaky forcing set that excludes two vertices $x,y$. In which case, $\{x,y\}$ nor $\{x\}$ nor $\{y\}$ can be forts.
\end{proof}

Next, we show that $S_n$ is the only connected graph with high $1$-leaky forcing number and more than two leaves.

\begin{lemma}\label{lem:StarLeaf}
If $G$ is a connected graph on $n$ vertices with at least two leaves, then $\Zone{G} = n-1$ if and only if $G\cong S_n$.
\end{lemma}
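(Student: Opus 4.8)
The plan is to reduce the extremal condition $\Zone{G}=n-1$ to a statement about forts via Lemma \ref{lem:MinForts}, and then to use the two leaves as obstructions to the fort condition. Throughout I would assume $n\ge 3$; the case $n=2$ is degenerate (the only connected graph with two leaves is $K_2$, for which $\Zone{K_2}=2\ne n-1$), so the statement is really about $n\ge 3$.

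For the backward direction, suppose $G\cong S_n$. Its $n-1$ leaves each have degree $1=\ell$, so by Lemma \ref{lem:lowdegree} every $1$-leaky forcing set contains all of them, giving $\Zone{S_n}\ge n-1$. Since $\Zone{G}\le n-1$ for every connected graph on $n\ge 3$ vertices (witnessed by $V(G)$ minus any vertex of degree at least $2$, as noted at the start of this section), equality follows at once.

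For the forward direction, assume $G$ is connected on $n\ge 3$ vertices, has at least two leaves, and $\Zone{G}=n-1$. First I would invoke Lemma \ref{lem:MinForts} with $\ell=1$: since $\Zone{G}\ge n-1$, every pair $\{u,v\}$ of vertices each of degree at least $2$ is a $1$-leaky fort. The governing observation is that a leaf $a\notin\{u,v\}$ adjacent to exactly one of $u,v$ is a vertex of $V\setminus\{u,v\}$ with exactly one neighbor in the fort, and by definition a $1$-leaky fort tolerates at most one such vertex. Hence two leaves, each adjacent to exactly one member of a candidate pair, would defeat the fort condition. I would then run two steps. Pick two leaves $a,b$ with unique neighbors $c_a,c_b$; connectivity and $n\ge 3$ force $\deg(c_a),\deg(c_b)\ge 2$ (otherwise $\{a,c_a\}$ or $\{b,c_b\}$ is an isolated $K_2$). \emph{Step 1:} show $c_a=c_b$. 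If not, then $\{c_a,c_b\}$ is a pair of degree-$\ge 2$ vertices, hence a $1$-leaky fort; but $a$ is adjacent only to $c_a$ and $b$ only to $c_b$, so both $a$ and $b$ have exactly one neighbor in $\{c_a,c_b\}$, a contradiction. So $c_a=c_b=:c$. \emph{Step 2:} show $c$ is the unique vertex of degree $\ge 2$. If some $w\ne c$ had $\deg(w)\ge 2$, then $\{c,w\}$ would be a $1$-leaky fort, yet $a$ and $b$ (each adjacent to $c$ but not to $w$) again each have exactly one neighbor in it, the same contradiction. Thus $c$ is the only vertex of degree at least $2$; connectivity then makes every other vertex a leaf adjacent to $c$, so $\deg(c)=n-1$ and $G\cong S_n$.

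The crux, rather than a technical hurdle, is recognizing that the hypothesis of \emph{two} leaves is precisely what breaks the fort condition: a single leaf adjacent to one vertex of a candidate pair can be absorbed by the one permitted leak, whereas two such leaves cannot. The remaining work is routine degree bookkeeping, namely confirming that neighbors of leaves have degree at least $2$ and that any putative second high-degree vertex $w$ is distinct from the leaves, both of which follow immediately from connectivity and $n\ge 3$.
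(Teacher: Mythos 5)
Your proof is correct, and it takes a genuinely different route from the paper's. The paper proves the forward direction contrapositively and constructively: assuming $G\not\cong S_n$ has at least two leaves, it exhibits an explicit $1$-leaky forcing set of size $n-2$, splitting into cases according to whether the two leaves hang from distinct vertices or from a common one, and checking in each case that a single leak cannot stop the forcing. You instead run everything through Lemma \ref{lem:MinForts}: since $\Zone{G}=n-1$ forces every pair of vertices of degree at least $2$ to be a $1$-leaky fort, and two leaves each adjacent to exactly one vertex of such a pair are two vertices outside the fort with exactly one neighbor inside it --- one more than a $1$-leaky fort tolerates --- you conclude first that the two leaves' support vertices coincide, and then that this common support $c$ is the unique vertex of degree at least $2$, whence connectivity gives $G\cong S_n$. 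Each approach buys something. The paper's argument is independent of the fort characterization and yields an explicit forcing strategy; yours is uniform (no case analysis on where the leaves attach) and reuses the machinery the paper itself deploys around this lemma, since Lemma \ref{lem:MinForts} is proved immediately beforehand and used again in Theorem \ref{thm:Zoneclass}. In fact your route sidesteps a weak point of the paper's case split: its side conditions ($\deg(u_1),\deg(u_2)>2$ in the first case, $\deg(u)>3$ in the second) do not on their face exhaust all configurations --- e.g.\ leaves attached to degree-$2$ vertices fit neither case --- so the paper implicitly leaves the reader to rule those configurations out under the hypothesis $\Zone{G}=n-1$, whereas your fort argument handles them all at once. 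Your auxiliary observations --- restricting to $n\geq 3$ because $S_2=K_2$ has $\Zone{K_2}=2\neq n-1$, and noting that $\deg(c_a),\deg(c_b)\geq 2$ follows from connectivity --- are correct and address genuine edge cases.
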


\begin{proof}
If $G\cong S_n$, then it is clear that $\Zone{G} = n-1$. 

Now, assume $G$ is a connected graph on n vertices with at least two leaves such that $\Zone{G} = n-1$ and $G\ncong S_n$. Suppose $G$ has two vertices $u_1$ and $u_2$ with neighbors $v_1$ and $v_2$, respectively, such that $\deg(u_1)>2$, $\deg(u_2)>2$, and $\deg(v_1)=\deg(v_2)=1$. Let $S=V(G)\setminus \{u_1,u_2\}$ be a set of initially blue vertices. Then, at least one vertex $u_i$ will be forced by $v_i$ for $i=1,2$, since at most one of $\{v_1,v_2\}$ can have a leak. The remaining white vertex can be forced by its non-leaf neighbor, so $S$ is a $1$-leaky forcing set and $\Zone{G} \leq n-2$.

Otherwise, $G$ has a vertex $u$ with neighbors $v_1$, $v_2$, and $w$ such that $\deg(u)>3$ and $\deg(v_1)=\deg(v_2)=1$. Let $S=V(G) \setminus \{u,w\}$ be a set of initially blue vertices. Then, $u$ will be forced by $v_i$ for $i=1,2$, since at most one of $\{v_1,v_2\}$ can have a leak. Finally, $u$ can force $w$, so $S$ is a $1$-leaky forcing set and $\Zone{G} \leq n-2$.
\end{proof} 

We now state and prove our characterization of graphs whose $1$-leaky forcing number is $n-1$.

\begin{thm} \label{thm:Zoneclass}
    Let $G$ be a connected graph on $n\geq 3$ vertices. Then $\Zone G = n-1$ if and only if $G\in\{K_n,S_n,K_{n-1}+v^\plainell,K_n -e\}$ where $v^\plainell$ is an added leaf vertex.
\end{thm}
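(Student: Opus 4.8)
The plan is to prove both directions using the fort characterization of Lemma \ref{lem:MinForts} together with Lemma \ref{lem:StarLeaf}. For the backward direction, I would verify that each of the four graphs attains $\Zone{G}=n-1$. Since $2\le \Zone{G}\le n-1$ for every connected graph on $n\ge 3$ vertices, it suffices to prove $\Zone{G}\ge n-1$ in each case, and by Proposition \ref{prop:LeakyIFFforts} this amounts to showing that any $1$-leaky forcing set omits at most one vertex. For $K_n$ this is Proposition \ref{prop:KP}, and for $S_n$ it follows from Theorem \ref{thm:LeakyTree}. For $K_{n-1}+v^\plainell$ and $K_n-e$ I would instead check directly that every pair of vertices of degree at least $2$ is a $1$-leaky fort (each such pair has at most one vertex adjacent to exactly one of the two); since a hitting set of all these pair-forts can exclude at most one vertex, Proposition \ref{prop:LeakyIFFforts} gives $\Zone{G}\ge n-1$.

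For the forward direction, assume $\Zone{G}=n-1$. As $n-1\ge n-1$, Lemma \ref{lem:MinForts} applies with $\ell=1$ and tells us that every pair of vertices of degree at least $2$ forms a $1$-leaky fort; concretely, for any two such vertices $u,v$ at most one vertex of $G$ is adjacent to exactly one of them. If $G$ has at least two leaves, Lemma \ref{lem:StarLeaf} immediately forces $G\cong S_n$. So I may assume $G$ has at most one leaf, and set $H$ to be the set of vertices of degree at least $2$, so that $|V(G)\setminus H|\le 1$.

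The heart of the argument is to show $G[H]\in\{K_{|H|},\,K_{|H|}-e\}$. First I would show that no vertex $a\in H$ can have two non-neighbors $b,c\in H$: applying the fort condition to the pairs $\{a,b\}$ and $\{a,c\}$ forces $b$ and $c$ to be adjacent to a common neighbor $x$ of $a$ (checking the subcases $\deg(a)=2$, $\deg(a)\ge 3$, and whether $b,c$ are themselves adjacent), and then the pair $\{a,x\}$ is distinguished by both $b$ and $c$, a contradiction. Hence the complement of $G[H]$ has maximum degree at most $1$, i.e. is a matching. If that matching contained two disjoint non-edges $\{a,b\}$ and $\{c,d\}$, then the pair $\{a,c\}$ (an edge) would be distinguished by both $b$ and $d$, again a contradiction; so $G[H]$ has at most one non-edge. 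If $G$ has no leaf then $G=G[H]\in\{K_n,K_n-e\}$, as desired. If $G$ has exactly one leaf $v^\plainell$ attached to $u$, I would rule out $G[H]=K_{|H|}-e$: whether or not the missing edge $ab$ meets $u$, one locates a pair of degree-$\ge 2$ vertices distinguished both by an endpoint of the missing edge and by the leaf $v^\plainell$, violating the fort condition; therefore $G[H]=K_{n-1}$ and $G=K_{n-1}+v^\plainell$.

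The main obstacle is the near-twin analysis showing that each vertex of $H$ has at most one non-neighbor in $H$: it requires a careful count of the ``distinguishing'' vertices and a split according to $\deg(a)=2$ versus $\deg(a)\ge 3$ (and the subcase of whether the two non-neighbors are adjacent). The one-leaf exclusion of $K_{n-1}-e$ is the other delicate point, since one must track the extra distinguishing vertex contributed by the leaf for every position of the missing edge relative to $u$. The remaining bookkeeping (small-$n$ overlaps such as $P_3=S_3=K_3-e=K_2+v^\plainell$, and the upper bound $\Zone{G}\le n-1$) is routine.
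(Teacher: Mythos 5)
Your proposal is correct, but the forward direction takes a genuinely different route from the paper. The paper proves that direction in contrapositive form: after disposing of $n=3,4$ by enumeration and the multi-leaf case via Lemma \ref{lem:StarLeaf}, it shows that every connected graph \emph{not} in the list admits an explicit $1$-leaky forcing set of size $n-2$, splitting into cases by the number of leaves (using the vertex-removal bound to reduce the one-leaf case to the leafless case), by minimum degree, and by the existence of common neighbors. You instead argue directly: from Lemma \ref{lem:MinForts}, $\Zone{G}=n-1$ forces every pair of degree-at-least-$2$ vertices to be a $1$-leaky fort, i.e.\ to have at most one distinguishing vertex, and your near-twin analysis (no vertex of $H$ has two non-neighbors in $H$, hence the complement of $G[H]$ is a matching, hence has at most one edge) converts this into the structural conclusion $G[H]\in\{K_{|H|},K_{|H|}-e\}$, after which the leaf bookkeeping pins down the four graphs. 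I checked the key steps --- the common-neighbor argument in both subcases $\deg(a)=2$ and $\deg(a)\ge 3$, the exclusion of two disjoint non-edges via the pair $\{a,c\}$, and the exclusion of $K_{|H|}-e$ in the one-leaf case via a pair distinguished by both the leaf and an endpoint of the missing edge --- and they all go through. What your approach buys is uniformity and brevity: the argument treats all $n\ge 3$ at once (the small cases become vacuous rather than requiring separate enumeration) and replaces the paper's construction-heavy contrapositive with a single structural deduction from the fort condition; what the paper's approach buys is that it is fully constructive, exhibiting concrete size-$(n-2)$ forcing sets for every excluded graph, and it reuses machinery (vertex removal, Lemma \ref{lem:twodifferentleaky}) developed elsewhere in the paper. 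One small repair is needed in your backward direction: for $K_{n-1}+v^\plainell$ the pair-forts live only on the $n-1$ clique vertices, so hitting them forces just $n-2$ vertices; you must also invoke the singleton fort $\{v^\plainell\}$ (equivalently Lemma \ref{lem:lowdegree}) to force the leaf into every $1$-leaky forcing set and reach the bound $n-1$.
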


\begin{proof}

Let $n=3$. Then, $G\in\{K_3,K_3-e\}$. If $G\cong K_3$, $Z_{(1)}(K_3)=2$ by Proposition \ref{prop:basicgraphresults}. If $G\cong K_3-e\cong P_3$,  $\Zone{G}=2$ by Proposition \ref{prop:basicgraphresults}. Since these are the only two connected graphs for $n=3$, the result is established for this case.

Let $n=4$. The only connected graphs on 4 vertices are $\{K_4,K_4-e,K_3+v^\plainell, S_4, C_4, P_4\}$. So, consider $G \in \{K_4,K_4-e,K_3+v^\plainell, S_4, C_4, P_4\}$. If $G\cong C_4$ or $G\cong P_4$, then $\Zone{G}= 2$ by Proposition \ref{prop:basicgraphresults}. Also, if $G\cong K_4$, $\Zone{G} =  3$ by Proposition \ref{prop:basicgraphresults}. So, it remains to establish the result for  $G\in\{K_4-e,K_3+v^\plainell, S_4\}$. 

If $G\cong K_4-e$, let $V(G)=\{v_1,v_2,v_3,v_4\}$ such that $e=v_1v_2$. One can argue that no subset of vertices of size two are $1$-leaky sets, and thus $3 \le \Zone{G}$ and equality follows noting the subset $\{v_1,v_2,v_3\}$ of vertices is a $1$-leaky forcing set. 

If $G\cong K_3+v^\plainell$, let $u\in V(G)$ be the vertex adjacent to the leaf $v^\plainell$ and let $\{v_1,v_2\}$ be the remaining vertices. Every $1$-leaky forcing set must contain the leaf $v^\plainell$. The set $\{u,v^\plainell\}$ is not a zero forcing set since $u$ has two white neighbors and $v^\plainell$ has no white neighbors. The set $\{v_i,v^\plainell\}$ for $i=1,2$ is not a $1$-leaky forcing set since $v_i$ has two white neighbors and placing the leak on $v^\plainell$ prevents it from forcing. Therefore, $\Zone{G}\geq 3$. The set $\{v_1,v_2,v^\plainell\}$ is a $1$-leaky forcing set of size $3$, since any vertex in the set can force $u$. Thus, $\Zone{G}= 3$.

Finally, if $G\cong S_4$, every $1$-leaky forcing set contains the set of $3$ leaves. Furthermore, the set of leaves is a $1$-leaky forcing set since the center vertex can always be forced by a leaf without a leak. Therefore $\Zone{G}=3$.

Now suppose $n\geq 5$. If $G$ has more than one leaf, then $\Zone{G} = n-1$ if and only if $G\cong S_n$ by Lemma \ref{lem:StarLeaf}. So we now assume that $G$ has at most one leaf.

First, we suppose $G\in\{K_n,K_{n-1}+v^\plainell,K_n -e\}$ and show that $\Zone{G} = n-1$. If $G\cong K_n$, then $\Zone{G} =n-1$ by Proposition \ref{prop:KP}. Next, suppose $G\cong K_{n-1}+v^\plainell$ and let $H$ be the subgraph of $G$ which is isomorphic to $K_{n-1}$. By Lemma \ref{lem:lowdegree}, $\{v^\plainell\}$ is a minimal $1$-leaky fort. No other minimal $1$-leaky fort will contain this vertex, so we turn our attention to the vertices of $H$. No single vertex $x$ in $H$ can be a $1$-leaky fort because every other vertex in $H$ is adjacent to $x$. Therefore, a minimal $1$-leaky fort in $H$ must have size at least two. Let $x$ and $y$ be any two vertices in $V(H)$. Every vertex in $V(H)\setminus\{x,y\}$ is adjacent to both $x$ and $y$ and if $v^\plainell$ is adjacent to $x$ or $y$, it will be the only vertex outside the $1$-leaky fort which is adjacent to exactly one vertex in the fort. Therefore, $\{x,y\}$ is a minimal $1$-leaky fort and by Lemma \ref{lem:MinForts}, $\Zone{G} = n-1$.

Finally, suppose $G\cong K_{n}-e$. No singleton set $\{x\}$ in $G$ can be $1$-leaky fort because all but one other vertex in $G$ is adjacent to $x$ (and since $n\geq 5$, $x$ has at least three neighbors). Therefore, a minimal $1$-leaky fort must have size at least two. Let $x$ and $y$ be any two vertices in $V(G)$. If $x$ and $y$ are both incident to $e$ or neither are incident to $e$, they have at least three common neighbors in $V(G)\setminus\{x,y\}$. If exactly one of $x$ or $y$ is incident to $e$, they have at least two common neighbors. In both cases, $\{x,y\}$ is a minimal $1$-leaky fort and by Lemma \ref{lem:MinForts}, $\Zone{G} = n-1$.

Now suppose that $G\not\in \{K_n,K_{n-1}+v^\plainell,K_n -e\}$. Since $G\not\cong  K_n$ and $G\not\cong K_n-e$, we know the graph must contain at least two non-edges (i.e. a pair of vertices which is non-adjacent). We will consider cases based on the number of leaves that $G$ contains.

\textbf{Case 1:} Suppose $G$ has no leaves, so the minimum degree of $G$ is at least 2. 

\textbf{Case 1.1:} The minimum degree of $G$ is $n-2$. 

Every vertex in $G$ is incident to at most one non-edge. In particular, since $G$ contains at least two non-edges, there exists vertices $x,y,w,z\in V(G)$ such that $xy\not\in E(G)$ and $wz\not\in E(G)$. Due to the minimum degree, all other edges must exist between pairs of these vertices. Since $n\geq 5$, there exists another vertex $v$ and because the minimum degree is $n-2$, $x,y,w,$ and $z$ must be adjacent to $v$. We will now demonstrate a 1-leaky forcing set of size $n-2$. Let $S=V(G)\setminus \{x,w\}$ be a set of initially blue vertices. If neither $y$ nor $z$ has the leak, then they force $w$ and $x$ respectively. Otherwise, without loss of generality, if $y$ has the leak, then $z$ forces $x$ and $x$ forces $w$. Therefore, we have found a 1-leaky forcing set of size $n-2$, so $\Zone{G} \leq n-2$.

\textbf{Case 1.2:} The minimum degree of $G$ is at most $n-3$. 

There exists a vertex $x\in V(G)$ which is incident to at least two non-edges. Let two of the vertices in $G$ which are not adjacent to $x$ be $y$ and $z$. Since the minimum degree is at least two, $x$ has at least two neighbors in $V(G)\setminus \{x,y,z\}$, and $y$ and $z$ each have at least one neighbor in $V(G)\setminus \{x,y,z\}$ (as they could be adjacent to each other).

Suppose that $y$ and $z$ share a common neighbor and call this neighbor $w$. Let $S=V(G)\setminus \{x,w\}$ be a set of $n-2$ initially blue vertices. Since $y$ and $z$ are both adjacent to $w$ but not to $x$, either of them can force $w$. Then, any neighbor of $x$ can force $x$ (there are at least two). Thus, $S$ is a zero forcing set for $G$, and since each force could be accomplished in at least two distinct ways, $S$ is a 1-leaky forcing set by Lemma \ref{lem:twodifferentleaky}. Thus, $\Zone{G} \leq n-2$.

Otherwise, suppose that $y$ and $z$ do not share a common neighbor. Let $w$ be a neighbor of $y$ which is not $z$ and let $v$ be a neighbor of $z$ which is not $y$. Let $S=V(G)\setminus \{y,z\}$ be a set of $n-2$ initially blue vertices. If neither $w$ nor $v$ is leaky, then they force $y$ and $z$ respectively. If, without loss of generality, $w$ is leaky, then $v$ forces $z$. The second neighbor of $y$ (which could be z) can then force $y$. Therefore, $S$ is a 1-leaky forcing set and $\Zone{G} \leq n-2$.

\textbf{Case 2:}  Suppose $G$ has exactly one leaf, denoted $w^*$.

By Lemma \ref{lem:lowdegree} leaf $w^*$ must be in every 1-leaky forcing set. Note that $G - w^*$ has $n-1$ vertices and no leaves. Therefore, if $G - w^*$ is a graph which is not $K_{n-1}$, $K_{n-2}+v^*$, or $K_{n-1}-e$ then by Case 1, $\Zone{G-w^*} \leq n-3$ and so $\Zone{G} \leq n-2$.

We now consider the case where $G-w^*$ is one of the graphs $K_{n-1}$, $K_{n-2}+v^*$, or $K_{n-1}-e$. First, note that $G-w^*$ cannot be $K_{n-1}$ since we assumed $G$ is not $K_{n-1}+v^*$.

\textbf{Case 2.1:} $G-w^*\cong K_{n-2}+v^\plainell$.

It must be that $G$ is $K_{n-2}$ with a $P_2$ appended to a vertex (since otherwise $G$ would contain more than one leaf). In this case, let $x$ be the vertex adjacent $v^*$, and let $y$ be the vertex in $K_{n-2}$ which is adjacent to $x$. Let $S=V(G)\setminus\{x,y\}$ be a set of $n-2$ initially blue vertices. Since $n\geq 5$, vertex $y$ has at least two neighbors in $K_{n-2}$ and can be forced by one of these neighbors. Finally, $x$ can be forced by $y$ or $v^\plainell$. Thus, $S$ is a zero forcing set for $G$, and since each force could be accomplished in at least two distinct ways, $S$ is a 1-leaky forcing set by Lemma \ref{lem:twodifferentleaky}. Thus, $\Zone{G} \leq n-2$.

\textbf{Case 2.2:} $G-w^*\cong K_{n-1}-e$

Then $G$ is $K_{n-1}-e$ with the leaf $w^*$ appended to some vertex. Let $y$ and $z$ be the vertices incident to the missing edge $e$.

First, suppose $w^*$ is adjacent to a vertex of degree $n-2$ in $K_{n-1}$, denoted $x$, and let $S=V(G)\setminus\{x,y\}$ be a set of $n-2$ initially blue vertices. Observe that $z$ or $w^*$ can force $x$ and then $y$ can be forced by any of its neighbors (since $n\geq 5$, $y$ has at least two neighbors). Thus, $S$ is a zero forcing set for $G$, and since each force could be accomplished in at least two distinct ways, $S$ is a 1-leaky forcing set by Lemma \ref{lem:twodifferentleaky}. Thus, $\Zone{G} \leq n-2$.

Otherwise, suppose without loss of generality that $w^*$ is adjacent to $y$. Let $S=V(G)\setminus\{x,y\}$ be a set of $n-2$ initially blue vertices. If $z$ is not leaky, then $z$ can force $x$ and $x$ can force $y$. Otherwise, if $z$ is leaky, $w^*$ can force $y$ and $y$ can force $x$. Therefore $S$ is a $1$-leaky forcing and $\Zone{G} \leq n-2$.
\end{proof}


\bibliographystyle{acm}
\bibliography{leaky_bib}

\begin{thebibliography}{10}

\bibitem{abbas2023resilient}
{\sc Abbas, W.}
\newblock Resilient strong structural controllability in networks using leaky
  forcing in graphs.
\newblock In {\em 2023 American Control Conference (ACC)\/} (2023), IEEE,
  pp.~1339--1344.

\bibitem{AIM}
{\sc {AIM Minimum Rank - Special Graphs Work Group}}.
\newblock Zero forcing sets and the minimum rank of graphs.
\newblock {\em Linear Algebra and its Applications 428}, 7 (2008), 1628--1648.

\bibitem{og}
{\sc Alameda, J.~S., Dillman, S., and Kenter, F.}
\newblock Leaky forcing: a new variation of zero forcing.
\newblock {\em AUSTRALASIAN JOURNAL OF COMBINATORICS 88}, 3 (2024), 308--326.

\bibitem{resl}
{\sc Alameda, J.~S., Kritschgau, J., Warnberg, N., and Young, M.}
\newblock On leaky forcing and resilience.
\newblock {\em Discrete Applied Mathematics 306\/} (2022), 32--45.

\bibitem{pmu}
{\sc Brueni, D.~J., and Heath, L.~S.}
\newblock The pmu placement problem.
\newblock {\em SIAM Journal on Discrete Mathematics 19}, 3 (2005), 744--761.

\bibitem{zfquantum}
{\sc Burgarth, D., D'Alessandro, D., Hogben, L., Severini, S., and Young, M.}
\newblock Zero forcing, linear and quantum controllability for systems evolving
  on networks.
\newblock {\em IEEE Transactions on Automatic Control 58}, 9 (2013),
  2349--2354.

\bibitem{edholm2012vertex}
{\sc Edholm, C.~J., Hogben, L., Huynh, M., LaGrange, J., and Row, D.~D.}
\newblock Vertex and edge spread of zero forcing number, maximum nullity, and
  minimum rank of a graph.
\newblock {\em Linear Algebra and its Applications 436}, 12 (2012), 4352--4372.

\bibitem{fallat2007minimum}
{\sc Fallat, S.~M., and Hogben, L.}
\newblock The minimum rank of symmetric matrices described by a graph: A
  survey.
\newblock {\em Linear Algebra and its Applications 426}, 2-3 (2007), 558--582.

\bibitem{FortsRef}
{\sc Fast, C.~C., and E., I. V.~H.}
\newblock Effects of vertex degrees on the zero-forcing number and propagation
  time of a graph.
\newblock {\em Discrete Appl. Math 250\/} (2018), 215--226.

\bibitem{powerdominiation}
{\sc Haynes, T.~W., Hedetniemi, S.~M., Hedetniemi, S.~T., and Henning, M.~A.}
\newblock Domination in graphs applied to electric power networks.
\newblock {\em SIAM journal on discrete mathematics 15}, 4 (2002), 519--529.

\bibitem{herrman2022d}
{\sc Herrman, R.}
\newblock The $(d- 2)$-leaky forcing number of ${Q}_d$ and $\ell$-leaky forcing
  number of ${GP} (n, 1)$.
\newblock {\em Discrete Optimization 46\/} (2022), 100744.

\bibitem{kenterlin}
{\sc Kenter, F.~H., and Lin, J. C.-H.}
\newblock On the error of a priori sampling: zero forcing sets and propagation
  time.
\newblock {\em Linear Algebra and its Applications 576\/} (2019), 124--141.

\bibitem{GPG20}
{\sc Krnc, M., and Wilson, R.~J.}
\newblock Recognizing generalized petersen graphs in linear time.
\newblock {\em Discrete Applied Mathematics 283\/} (2020), 756--761.

\bibitem{zfleader}
{\sc Monshizadeh, N., Zhang, S., and Camlibel, M.~K.}
\newblock Zero forcing sets and controllability of dynamical systems defined on
  graphs.
\newblock {\em IEEE Transactions on Automatic Control 59}, 9 (2014),
  2562--2567.

\bibitem{moruzziLeakyForcing}
{\sc Moruzzi~Jr, R.}
\newblock Leaky standard and positive semidefinite forcing algorithm.
\newblock \url{https://github.com/rrmoruzzi/Leaky-Forcing-Algorithm}, 2024.

\bibitem{rashidi2020computing}
{\sc Rashidi, S., Poursalavat{\i}, N.~S., and Tavakkol{\i}, M.}
\newblock Computing the zero forcing number for generalized petersen graphs.
\newblock {\em Journal of Algebra Combinatorics Discrete Structures and
  Applications 7}, 2 (2020), 183--193.

\bibitem{row2012technique}
{\sc Row, D.~D.}
\newblock A technique for computing the zero forcing number of a graph with a
  cut-vertex.
\newblock {\em Linear Algebra and its Applications 436}, 12 (2012), 4423--4432.

\bibitem{West}
{\sc West, D.~B.}
\newblock {\em Introduction to Graph Theory}, 2nd~ed.
\newblock Prentice Hall, 2000.

\end{thebibliography}

\end{document}